\documentclass[11pt,leqno,twoside]{amsart}
\usepackage{mathpazo}
\usepackage{geometry}
\usepackage{tikz-cd}
\usepackage{graphicx} 
\usepackage{amsmath}
\usepackage{amssymb}
\usepackage{extarrows}
\usepackage{csquotes}
\usepackage{comment}
\usepackage{mathrsfs}
\usepackage{amsthm}
\usepackage{enumitem}
\usepackage{color}
\usepackage{tikz}
\usepackage[colorlinks=true, linkcolor=blue, urlcolor=blue, citecolor=blue]{hyperref}
\newtheoremstyle{plainsl}
{8pt plus 2pt minus 4pt}
{8pt plus 2pt minus 4pt}
{\slshape}
{0pt}
{\bfseries}
{.}
{5pt plus 1pt minus 1pt}
{}
\theoremstyle{plainsl}
\newtheorem{theorem}{Theorem}[section]

\newtheorem{prop*}{Proposition}

\newtheorem{lemma}[theorem]{Lemma}
\newtheorem{corollary}[theorem]{Corollary}
\newtheorem{conjecture}[theorem]{Conjecture}
\newtheorem{remark}[theorem]{Remark}
\theoremstyle{definition}
\newtheorem{definition}[theorem]{Definition}
\newtheorem{example}[theorem]{Example}

\def\ba#1\ea{\begin{align*}#1\end{align*}}

\newcommand{\Q}{\mathbb{Q}}

\newcommand{\cH}{\mathcal{H}}

\newcommand{\cM}{\mathcal{M}}

\newcommand{\cO}{\mathcal{O}}

\newcommand{\bD}{\mathbf{D}}

\newcommand{\sD}{\mathscr{D}}

\newcommand{\gr}{\mathrm{Gr}}

\newcommand{\DR}{\mathrm{DR}}

\newcommand{\MHM}{\mathrm{MHM}}

\newcommand{\mhm}{\mathrm{MHM}}
\newcommand{\td}{\mathrm{td}}

\title[Hodge index theorem for finite quotient spaces]{On the conjectural Hodge index theorem for finite quotient spaces of singular varieties}
\author{Mohammadali Aligholi}
\address{M. Aligholi: Department of Mathematics, University of Wisconsin-Madison, 480 Lincoln Drive, Madison WI 53706-1388, USA}
\email{aligholi@wisc.edu}
\date{}
\begin{document}
\raggedbottom
\begin{abstract}
We study the conjectural characteristic class analogue of the Hodge index theorem for singular complex algebraic varieties, formulated by Brasselet--Sch\"urmann--Yokura, which expresses the Goresky--MacPherson homology $L$-classes in terms of suitable Hodge-theoretic $L$-classes, for  quotient spaces $X/G$, where $G$ is a finite group and $X$ is a pure-dimensional complex projective variety. Assuming that an equivariant $K$-theoretical version of the conjecture holds for $X$, we show that the characteristic class conjecture holds for $X/G$. Without assuming the equivariant $K$-theoretical version, we show that the characteristic class conjecture holds for $X/G$, provided that it holds for all fixed point sets $X^g$ and that a suitable normally nonsingular inclusion assumption is satisfied. Our approach is to work in equivariant analytic $K$-homology and equivariant algebraic $G$-theory and to identify the corresponding localized classes. On the analytic side, we identify the Banagl--Zagier equivariant $L$-classes with localized Chern character of the equivariant $K$-homology class of the  signature operator defined by Banagl--Leichtnam--Piazza. On the algebraic side, we compute the localization of the equivariant motivic Hodge-Chern class transformation of the intersection Hodge module. 
\end{abstract}
\subjclass[2020]{ 14C17, 14C40, 14L30, 19K35,
19L47, 55N33, 58J20, 32S35}
\keywords{G-signature, signature operator,  intersection homology, $L$-classes,  motivic Hodge-Chern class transformation, Hodge index theorem, Hodge modules, equivariant analytic $K$-homology, equivariant G-theory}
\setcounter{tocdepth}{3}
\maketitle
\tableofcontents
\section{Introduction}
Brasselet–Sch\"urmann–Yokura \cite{BSY} formulated a conjectural characteristic class analogue of the Hodge index theorem for singular complex algebraic varieties. More precisely:
\begin{conjecture} \label{BSYi}
Let $X$ be a compact pure-dimensional complex algebraic variety. Then
\begin{equation}\label{conji}
L_*(X)=IT_{1*}(X)
\end{equation}
\end{conjecture}
Here 
\begin{equation}
IT_{y*}(X):=MHT_{y*}([IC'^H_X]) \in H_*(X) \otimes \Q[y^{\pm 1}],
\end{equation}
with $IC'^H_X:=IC_X^H[-\dim_{\mathbb{C}}(X)]$.   Let $\MHM(X)$ denote the abelian category of algebraic mixed Hodge modules on $X$ \cite{Sa}. The Hodge-theoretic {\it Hirzebruch class transformation} \cite{BSY} is defined by the composition
\begin{align*}
MHT_{y\ast}: K_0(\mhm(X)) &\to H_*(X) \otimes \Q[y^{\pm 1}, (1+y)^{-1}] \\
MHT_{y\ast} &=\td_{(1+y)\ast}  \circ MHC_y
\end{align*}
where $\td_{(1+y)\ast}$
is the scalar extension of the
Baum--Fulton-MacPherson Todd class transformation $\td_*:G_0(X) \to H_\ast(X) \otimes \Q$ \cite{BFM}, together with the multiplication by $(1+y)^{-k}$ on $H_{2k}(X)$,  and a  motivic  Chern class transformation
\[
 MHC_y: K_0(\MHM(X)) \to  G_0(X)[y,y^{-1}]
\]
where $G_0(X)=K_0(Coh(X))$ is the Grothendieck group of algebraic coherent sheaves
of $\cO_X$-modules on $X$. 
If $X$ is smooth and $M\in \MHM(X)$, let $(\cM,F_{\bullet}\cM)$ be the underlying filtered $\sD_X$-module of $M$, and set
\[
\begin{split}
MHC_y[M] &= \sum_{p}  \left[\gr^F_{-p}\DR(\cM)\right] \cdot(-y)^p  \\
&= \sum_{p,i} (-1)^i \left[\cH^i \gr^F_{-p}\DR(\cM)\right] \cdot(-y)^p \in G_0(X)[y,y^{-1}]
\end{split}
\]
with $\gr^F_{-p}\DR(\cM)$ the graded parts of the de Rham complex of $\cM$ with respect to the induced Hodge filtration. 

 The Goresky-MacPherson $L$-class \cite{GM} is given by $$L_*(X)= L_*(\left[IC_X,\alpha_{can}\right])$$ 
 where $IC_X$ is the intersection complex of $X$ equipped with the self-duality isomorphism
\[
\alpha_{\mathrm{can}}: IC_X \xrightarrow{\cong} \mathbb{D}_X IC_X
\]
coming from Verdier duality. $L_*$ is a functorial $L$-class transformation
 \[L_*:\Omega_{\mathbb{R}}(X)\rightarrow H_*(X,\mathbb{R})\]
  from the cobordism group  of self-dual sheaf complexes $\Omega_{\mathbb{R}}(X)$, constructed via the methods of Cappell-Shaneson \cite{CS}, and its relation to other  $L$-class transformations was explained in detail in \cite[Section~3.3]{AMS}.  
  
Conjecture \ref{BSYi} holds in the smooth context by work of Hirzebruch on the generalized Hirzebruch-Riemann-Roch theorem.  The conjecture has been proved in many cases, including  the case of rational homology manifolds in \cite{FP,FPS, AMS}, all  compact toric varieties and
(matroid) Schubert varieties, complex algebraic surfaces and threefolds in \cite{AMS}.  In this paper we focus on proving the conjecture for a quotient space $X/G$, $G$ a finite group.

Before we discuss the case of the quotient space $X/G$, we say a few words on why it's more natural to work in $K$-homology instead of $H_*(X,\mathbb{C})$. Conjecture \ref{BSYi} concerns the coincidence of two homology classes in $H_*(X,\mathbb{C})$. We can lift this problem to the level of $K$-homology. On the algebraic side, by definition
\[
IT_{1*}(X)=\td_{2*}(MHC_1(IC'^H_X))
\]
where $MHC_1(IC'^H_X) \in G_0(X)$. On the analytic side, the $K$-homology class of signature operator, whose   Chern character computes (renormalized) Goresky--MacPherson $L$-class,  was constructed by Moscovici--Wu \cite{MW} for Witt spaces, using results of Cheeger \cite{Ch}, for any metric on the regular part quasi-isometric to a piecewise-flat conic metric. Albin--Leichtnam--Mazzeo--Piazza \cite{ALMP} reconstructed the signature operator using a \textit{wedge} (iterated edge) metric, which belongs to the class considered by \cite{MW}. ($n=dim_{\mathbb{R}}X$ is even as $X$ is a complex variety)

\[
\left[D^{sign}_X\right]\in K^{an}_n(X)\cong K^{an}_0(X)
\]
\[\Psi_2Ch(\left[D^{sign}_X\right])=L_*(X)\]
 Thus, instead of trying to compare the two homology classes directly, which is difficult because of the purely topological construction of $L$-classes, we may instead compare the corresponding $K$-theoretic classes. We briefly recall how to bridge various $K$-homologies for a complex projective variety $X$.  There are comparison  maps between topological complex $K$-homology,  Kasparov $K$-homology,  and  Grothendieck group of algebraic coherent sheaves:
\[
G_0(X)\xrightarrow{\alpha} K_0^{top}(X)
\xrightarrow{\underset{\cong}{\lambda_c}}
K_0^{an}(X)
\]
$\alpha$ is the Baum-Fulton-MacPherson Riemann-Roch transformation \cite{BFM2}. See Section \ref{compairsonmaps} for definition of $\lambda_c$. We have the compatibility of  Chern characters and Todd class transformation which make the following diagram commutative:
\begin{equation}\label{compatiblitcherntodd}
\begin{tikzcd}
K_0^{an}(X)
 \arrow[drrr, "Ch"]
\\
K_0^{top}(X)
    \arrow[rrr, "ch^{top}"]
    \arrow[u, "\lambda_c"]
    &&&
H_*(X,\mathbb{C})
\\
G_0(X)
    \arrow[u, "\alpha"]
    \arrow[urrr, "\td_*"]
\end{tikzcd}
\end{equation}
The Sullivan–Siegel orientation of $X$ is a real $K$-homology class 
\[\Delta(X)\in KO_{n}^{top}(X)[\frac{1}{2}]\]
It was shown by P. Albin, M. Banagl, and P. Piazza \cite[Thm. 7.6]{ABP1} that 
\[\lambda_c \circ c \circ (\Psi_{KO}^2)^{-1} (\Delta(X))=sign_K(X):=2^{-n/2}\left[D_X^{sign}\right]\in K_{n}^{an}(X)[\frac{1}{2}]\cong K_{0}^{an}(X)[\frac{1}{2}]\]
Where 
\[c:KO_{n}^{top}(X)[\frac{1}{2}]\rightarrow K_{n}^{top}(X)[\frac{1}{2}]\]
is the complexification map and
\[\Psi_{KO}^2:KO[\frac{1}{2}]\rightarrow KO[\frac{1}{2}]\]
is the stable Adams operation on the $KO$-spectrum.
We may consider a stronger conjecture which provides the corresponding object in $G_0(X)$.
\begin{conjecture}($K$-theoretical Hodge index conjecture)\label{k-conjecture}
    Let $X$ be a  pure-dimensional projective complex algebraic variety. Then the following equality holds in $K_0^{an}(X)$
    \begin{equation}
        (\lambda_c \circ \alpha)([MHC_{1}(IC'^H_X)])=\left[D^{sign}_X\right]
    \end{equation}
\end{conjecture}
This is by no means an easier problem, but working in $K$-theory has several benefits. For example, $L$-classes are global, and finding a local-to-global formula for even the signature is a notoriously difficult problem. J. Cheeger \cite[Thm. 9.3]{Ch} found such a formula for a closed oriented Riemannian pseudomanifold with a piecewise-flat metric in terms of a sum of local $\eta$-invariants associated with the links of strata, by the heat equation methods. However, these invariants are  difficult to compute explicitly and may well not be rational numbers in general, and have no known topological interpretation. For a general smooth non-compact manifold, there is no ordinary closed-manifold signature, whereas the signature operator still defines a Dirac-type operator. In $K$-theory, there are long exact sequences relating classes supported on open and closed subspaces, and hence one can break down the class of the signature operator.

Another benefit of working in $K$-theory arises in the equivariant setting where the action by a group is available. Equivariant characteristic classes are  classes often supported on fixed point sets and can be viewed as arising from  localization at the level of $K$-theory. Then these equivariant classes could be used to express characteristic classes of the quotient space $X/G$.  When $X$ is a  projective $G$-manifold  and $G$ is a finite group of algebraic automorphisms, $X/G$ is a rational homology manifold; hence, as mentioned above, Conjecture \ref{BSYi} holds for $X/G$. Nevertheless  S. E. Cappell, L. Maxim, J. Sch\"urmann, and J. L. Shaneson \cite{CMSS} proved the conjecture for $X/G$   directly by  expressing the $L$-class and Hirzebruch class of $X/G$ as averages of localized classes on fixed point sets.
\begin{theorem} \cite[Corollary 5.7]{CMSS} \label{averagetoddalgebraic} Let $X$ be a pure-dimensional complex quasi-projective variety acted upon by a
finite group $G$ of algebraic automorphisms.
     \begin{equation}\label{introit1fomrula}
         IT_{1*}(X/G)=\frac{1}{|G|}\sum_{g\in G} \pi_*^gIT_{1*}(X,g)
     \end{equation}
where $\pi^g:X^g\xrightarrow{i} X\xrightarrow{\pi}X/G$.
\end{theorem}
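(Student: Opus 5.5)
The plan is to realize the quotient by a $G$-equivariant construction on $X$ and then apply an averaging/Lefschetz–Riemann–Roch formula at the level of equivariant coherent sheaves. Let $\pi: X\to X/G$ be the projection. First we identify the intersection Hodge module of $X/G$ as the $G$-invariant part of a pushforward:
\[
IC'^H_{X/G}\cong (\pi_* IC'^H_X)^G .
\]
This holds because $\pi$ is finite, so $\pi_*$ is exact on $\MHM$ and preserves intermediate extensions. On a dense open set where the action is free, $\pi$ is étale Galois and the invariants of $\pi_*\Q^H$ recover $\Q^H$. The isomorphism then extends by uniqueness of intermediate extensions, since the $G$-invariant summand of a direct sum of simple objects is again such an extension.

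Next we pass to $G_0$. Since $\pi$ is finite, $MHC_y$ commutes with proper pushforward, so $MHC_y(\pi_*IC'^H_X)=\pi_*MHC_y(IC'^H_X)$. To take invariants we lift everything to the equivariant world. The module $IC'^H_X$ carries a canonical $G$-equivariant structure, so the equivariant motivic Chern class
\[
MHC_y^G(IC'^H_X)\in G_0^G(X)[y^{\pm1}]
\]
is defined. Taking invariants $(\pi_*(-))^G$ is an exact functor $Coh^G(X)\to Coh(X/G)$, since $|G|$ is invertible. The invariant part of the Hodge-filtered de Rham complex is the filtered de Rham complex of the invariant module. Together these give
\[
MHC_y(IC'^H_{X/G})=\pi_*^G\, MHC^G_y(IC'^H_X).
\]

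Finally we apply the equivariant Todd class transformation. Following the Lefschetz–Riemann–Roch theorem of Baum–Fulton–Quart / Edidin–Graham for finite groups, for $\mathcal F\in G_0^G(X)$ we have
\[
\td_*\bigl((\pi_*\mathcal F)^G\bigr)=\frac1{|G|}\sum_{g\in G}\pi^g_*\,\td_*(\mathcal F)(g),
\]
where $\td_*(-)(g)\in H_*(X^g)\otimes\mathbb C$ is the localized Atiyah–Singer type class. This comes from the decomposition $\mathcal F\mapsto \frac1{|G|}\sum_g g\cdot$ of the invariant projector, combined with localization of $g$-traces to the fixed locus $X^g$. The resulting classes are exactly those defining $MHT_{y*}(X,g)$ and hence $IT_{y*}(X,g)$. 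We then compose with the $(1+y)^{-k}$ renormalization, which commutes with pushforward because it is degree-wise, and set $y=1$. This yields \eqref{introit1fomrula}.

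The main obstacle is the Lefschetz–Riemann–Roch averaging step for singular $X$. There one must verify that the localized Todd classes $\td_*(-)(g)$, together with the renormalization, match the definition of $IT_{1*}(X,g)$ and commute with $\pi^g_*$. I would handle this by embedding $X$ $G$-equivariantly into a smooth $G$-variety $M$, which is possible since $X$ is quasi-projective. On $M$ the fixed loci $M^g$ are smooth, and the localized class is computed by restricting to $M^g$ and dividing by $\lambda_{-1}(N^\vee_{M^g})(g)$. Compatibility with the finite pushforward to $M/G$ then follows from functoriality of the equivariant Todd transformation under proper maps.
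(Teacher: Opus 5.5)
Your proposal is correct and follows essentially the same route as the paper, which cites \cite[Lemma 5.6, Cor.~5.7]{CMSS}. Both identify $IC'^H_{X/G}\cong(\pi_*IC'^H_X)^G$, use compatibility of $MHC^G_y$ with proper pushforward and invariants to get $MHC_1(IC'^H_{X/G})=(\pi_*MHC^G_1(IC'^H_X))^G$, apply the Baum--Fulton--Quart/Moonen Lefschetz--Riemann--Roch averaging formula to $MHC^G_1(IC'^H_X)$, and finish with $\Psi_2$, which commutes with $\pi^g_*$. One small imprecision is in your first step: the action need not be generically free (for instance, if some element acts trivially on one component), so you should work on a dense open set where stabilizers are locally constant and act trivially on $\Q^H$, rather than on a free locus.
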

$IT_{1*}(X,g)$ is obtained from  \cite{CMSS}:
\[IT_{1*}(X,g):=\td_{2*}(g)( MHC^G_1 (IC'^H_X))\]
where
\[
 MHC^G_y: K_0(\MHM^G(X)) \to  G^G_0(X)[y,y^{-1}]
\]
is the G-equivariant motivic Chern class transformation, defined in \cite{CMSS}, from the Grothendieck group of $G$-equivariant algebraic mixed Hodge modules to the Grothendieck group of $G$-equivariant algebraic coherent sheaves on $X$. We mention that the construction of $MHC^G_y$ in \cite{CMSS} relies heavily on their notion of \textit{weak equivariant derived categories}  to adapt Saito’s
functors $\gr^F_{-p}\DR$ to the equivariant context. We have $\td_{2*}(g)=\Psi_2\circ \td_{*}(g)$ where
\[ \td_{*}(g)(-): G^G_0(X)\rightarrow H_*^{BM}(X^g,\mathbb{C})\]
is Lefschetz–Riemann–Roch 
transformation \cite{BFQ, CMSS}. Note that the (shifted) intersection Hodge module $IC'^H_X$ is $G$-equivariant. It follows that $IT_{1*}(X,g)$ is supported on the fixed point set $X^g$. 
One can view $\td_{*}(g)$ as Todd class transformation applied to the algebraic localization in $G$-theory;
\[IT_{1*}(X,g)=\Psi_2\td_{*}(MHC^G_1 (IC'^H_X),g)\]
where the transformation $\td_{*}(-,g)$ is the composition of the maps ($G'=\langle g \rangle$)
    \[G_0^{G}(X)\xrightarrow{res^G_{G'}} G_0^{G'}(X)\rightarrow G_0^{G'}(X)_{{g}}\xrightarrow[]{L_{{g}}}G_0^{G'}(X^g)_{g}\simeq G_0(X^g) \otimes R(G')_{g}\xrightarrow{\td_{*}\otimes \chi_g}  H_*(X^g,\mathbb{C})\]
 where the algebraic localization map $L_g$ is the inverse of the isomorphism induced from the inclusion $i:X^g\hookrightarrow X$ \cite{T1, BFQ, Q79, Ta} 
     \[
L_g=(i_*)^{-1}:G_0^{G'}(X)_g\xrightarrow{\cong}G_0^{G'}(X^g)_g
\]
The analogue of (\ref{introit1fomrula}) for $L_*(X/G)$ was proved by D. Zagier \cite{Z} in the case of smooth $X$. A generalization to Witt pseudomanifolds was obtained by  M. Banagl \cite{B26}.
\begin{theorem} \cite[Thm. 10.1]{B26} \label{banaglintro}
   Let $G$ be a finite group and $X$ an oriented closed Witt pseudomanifold upon
which $G$ acts simplicially, preserving the orientation. Then $X/G$ is a Witt space and
    \begin{equation}\label{averagelclass}
        L_*(X/G)=\frac{1}{|G|}\sum_{g\in G} \pi_*L_*(X,g)
    \end{equation}
    where $\pi: X\rightarrow X/G$ is the projection, and $L_*(X,g)\in H_*(X,\mathbb{C})^G$ are Banagl's \textit{equivariant $L$-classes of Atiyah–Singer–Zagier type}.
\end{theorem}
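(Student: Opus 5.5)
We prove Theorem \ref{banaglintro} in three steps: a local Witt check, a transfer identity in intersection homology, and an equivariant cobordism argument for the $L$-classes.

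\textbf{Step 1: $X/G$ is Witt.} First pass to a $G$-equivariant subdivision on which $G$ acts regularly: if a simplex is mapped to itself, it is fixed pointwise. Then $X/G$ inherits a triangulation, and its intrinsic stratification is covered by the images of $G$-invariant strata of $X$. The link of a point $\pi(x)$ in $X/G$ is the quotient $\mathrm{Lk}(x)/G_x$ by the isotropy group. Since $G$ preserves orientation, we have the transfer isomorphism
\[IH_*(\mathrm{Lk}(x)/G_x;\Q)\cong IH_*(\mathrm{Lk}(x);\Q)^{G_x}.\]
Hence the lower middle-perversity intersection homology of an even-codimension link of $X/G$ is the $G_x$-invariant part of a group that vanishes in middle degree. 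So the Witt condition descends. We also check that $X/G$ remains an oriented pseudomanifold, which needs the codimension-one stratum to stay empty; this uses the orientation-preserving hypothesis.

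\textbf{Step 2: global transfer.} The same transfer argument gives $IH_*(X/G;\Q)\cong IH_*(X;\Q)^G$, compatibly with the intersection pairings up to the factor $|G|$. We define Banagl's class $L_*(X,g)$ through Poincar\'e duality in the Siegel sense. For each transverse map $f\colon X\to S^k$, we use the $g$-signature of the Witt pseudomanifold $f^{-1}(pt)$, or of the equivariant bordism representative. This produces a homology class $L_*(X,g)$ whose pairing with $f^*[S^k]$ is $\mathrm{sign}(g, f^{-1}(pt))$.

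\textbf{Step 3: averaging.} Let $\bar f\colon X/G\to S^k$ be transverse and set $f=\bar f\circ\pi$. Then $f^{-1}(pt)$ is a $G$-invariant Witt subspace, and
\[\bar f^{-1}(pt)=f^{-1}(pt)/G.\]
Character theory applied to the nondegenerate pairing on $IH_{mid}$ gives
\[\mathrm{sign}(Y/G)=\mathrm{sign}(IH(Y)^G)=\frac{1}{|G|}\sum_{g}\mathrm{sign}(g,Y),\]
using that $\frac{1}{|G|}\sum_g g$ is the projector onto the invariants. This yields
\[\langle L_*(X/G),\bar f^*u\rangle=\frac{1}{|G|}\sum_g\langle L_*(X,g),\pi^*\bar f^*u\rangle=\frac{1}{|G|}\sum_g\langle \pi_*L_*(X,g),\bar f^*u\rangle.\]
The classes of the form $\bar f^*u$ span $H^*(X/G;\Q)$ rationally in the stable range; this is Serre's theorem, applied after crossing with a high-dimensional sphere or torus to stabilize, as in the standard Goresky--MacPherson construction. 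Since the pairings agree on a spanning set, (\ref{averagelclass}) follows.

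\textbf{Main obstacle.} The hardest step is making the equivariant transversality and the well-definedness of $L_*(X,g)$ rigorous. Concretely, we need:
\begin{itemize}
\item that the $g$-signature of $f^{-1}(pt)$ is a bordism invariant of Witt $G$-spaces, so that it is independent of the transverse representative;
\item that it is additive, so that it defines a homology class on all of $X$.
\end{itemize}
The approach I would try first is to prove that $g$-signatures vanish on $G$-Witt boundaries. The argument is the usual Lagrangian one: the image of $IH(\partial W)$ in middle degree is a $g$-invariant Lagrangian subspace for the intersection form, so the $g$-signature is zero.
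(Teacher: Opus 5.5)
The paper gives no proof of this statement. It is quoted from \cite[Thm.~10.1]{B26}, and the only ingredients recalled here are Banagl's definition $L_*(X,g)=\frac{1}{\deg\pi}\pi_!S(g)$ and the characterization (\ref{localhomologydetermined}). Your argument is the Zagier-type proof that this definition is built around, and it is correct in outline. Since $\pi_*\pi_!=\deg\pi$, one has $\pi_*L_*(X,g)=S(g)$. So the formula is equivalent to $\operatorname{Sign}(\bar f^{-1}(p))=\frac{1}{|G|}\sum_{g}\operatorname{Sign}(g,f^{-1}(p))$ for $f=\bar f\circ\pi$, with $\bar f\colon X/G\to S^i$ transverse in the stable range. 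That is exactly your Step~3: the transfer $IH(Y/G;\Q)\cong IH(Y;\Q)^G$ with intersection forms agreeing up to a positive scalar, followed by averaging characters. Working with maps $\bar f$ on $X/G$ is cleaner than working with $G$-fixed homotopy classes on $X$, since you never need strictly invariant representatives.

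For contrast, the paper sketches an analytic route in the Remark after Corollary~\ref{averageLclass}. Theorem~\ref{LRRan} together with Theorem~\ref{Banagl_localization_comparison} would give the same formula once one knows $(\pi_*[D_X^{sign,G}])^G=[D^{sign}_{X/G}]$ in $K^{an}_0(X/G)$, which is not established there. Your topological route needs only Witt $G$-bordism invariance of $g$-signatures and the intersection-homology transfer.

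Three small points need tightening.
\begin{enumerate}
\item In Step~2 the maps $f\colon X\to S^k$ must be $G$-invariant ($f\circ h=f$ for all $h\in G$, with trivial action on $S^k$); otherwise $g$ does not act on $f^{-1}(pt)$. The pairings then only determine a functional on $H^*(X;\Q)^G\cong H^*(X/G;\Q)$. A class with the prescribed pairings is therefore unique only among $G$-invariant classes, and Banagl's class is singled out by $L_*(X,g)\in H_*(X;\mathbb{C})^G$. This is harmless for the theorem, since only $\pi_*L_*(X,g)$ enters.
\item In the bordism argument, the $g$-invariant Lagrangian is $\ker\bigl(IH_{\mathrm{mid}}(\partial W)\to IH_{\mathrm{mid}}(W)\bigr)$, not an image of $IH(\partial W)$. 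Vanishing of the $g$-signature follows because the projections of this Lagrangian onto $G$-invariant maximal positive and negative subspaces $V_\pm$ are $G$-isomorphisms, so $V_+\cong V_-$ as representations.
\item In Step~1 the Witt condition concerns links of strata, not links of points. Refine the stratification by orbit type, so that $G_x$ fixes the stratum $S\ni x$ pointwise. Then the link of $\pi(S)$ is $L_S/G_x$, and $G_x$ preserves the orientation of $L_S$. The transfer gives $I^{\bar m}H_k(L_S/G_x;\Q)\cong I^{\bar m}H_k(L_S;\Q)^{G_x}=0$ whenever $\dim L_S=2k$.
\end{enumerate}
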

 We prove, in Theorem \ref{Banagl_localization_comparison}, that $L_*(X,g)$ coincides (after normalization)  with the localized Chern character of the analytic equivariant $K$-homology signature class $[D^{sign,G}_{X}]$ of \cite{BLP}, thereby clarifying the relationship between $L_*(X,g)$ and $[D^{sign,G}_{X}]$ mentioned in \cite[p.~4]{B26}. In particular, one can see that $L_*(X,g)$ is supported on the fixed point set $X^g$, which is not clear from the definition of $L_*(X,g)$ given  in loc. cit.
\begin{theorem}\label{bangltolocalizeintro}
Let $G$ be a finite group and let $X$ be a compact oriented even-dimensional triangulated Witt pseudomanifold on which $G$ acts by orientation-preserving simplicial maps. Let $g\in G$, and let
$
i:X^g\rightarrow X
$
be the inclusion of the fixed point set. Then
\[
L_*(X,g)=i_*(\Psi_2(L^{an}_*(X,g)))
\]
\[
L^{an}_*(X,g):=Ch(\left[D_X^{sign,G}\right],g)
\]
 where the transformation $Ch(-,g)$ is the composition of the maps 
    \[K_0^{G,an}(X)\xrightarrow{res^G_{G'}} K_0^{G',an}(X)\rightarrow K_0^{G',an}(X)_{{g}}\xrightarrow[]{\gamma_{{g}}}K_0^{G',an}(X^g)_{g}\simeq K_0^{an}(X^g) \otimes R(G')_{g}\xrightarrow{Ch\otimes \chi_g}H_*(X^g,\mathbb{C})\]
here $\gamma_g$ is the localization map in analytic $K$-homology \cite{BLP} for the action of $G'=\langle g \rangle$
\[\gamma_{{g}}=(i_*)^{-1}:K_0^{G',an}(X)_{{g}}\xrightarrow[]{\cong}K_0^{G',an}(X^g)_{g}\]
\end{theorem}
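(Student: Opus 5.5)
The plan is to compare both sides by pairing with a spanning family of cohomology classes. Banagl's equivariant $L$-class $L_*(X,g)\in H_*(X,\mathbb{C})^G$ is defined in the Thom--Milnor--Zagier style. For a $G$-invariant PL map $f:X\to S^k$ that is transverse to a point $p$, the preimage $Y=f^{-1}(p)$ is a $G$-invariant Witt subspace with trivial normal bundle. The defining property is
\[
\langle f^*[S^k], L_{n-k}(X,g)\rangle=\mathrm{sign}(g,Y),
\]
the $g$-signature of the intersection form of $Y$. Since classes of the form $f^*[S^k]$ span $H^*(X,\mathbb{Q})^G$ rationally (Serre), and both sides of the claimed identity are $G$-invariant, it suffices to show
\[
\langle f^*[S^k],\, i_*\Psi_2 L^{an}_*(X,g)\rangle=\mathrm{sign}(g,Y)
\]
for all such $f$. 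Invariance of the right-hand side under $G$ should follow because conjugation by $h\in G$ maps $X^g$ to $X^{hgh^{-1}}$ and $[D^{sign,G}_X]$ is $G$-invariant. I will also need to check that the sum over conjugates matches Banagl's convention.

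\textbf{Step 1: localization and pushforward to a point.} Let $G'=\langle g\rangle$ and write $\gamma_g=(i_*)^{-1}$ for the localization. The map $i_*$ commutes with the $R(G')$-module structure and with equivariant cap products by classes pulled back from $X$. Hence for any $x\in K_0^{G',an}(X)$ and any $G'$-equivariant $K$-theory class $\beta$ on $X$, the pushforward of $\gamma_g(\beta\cap x)$ to a point equals the image of $\mathrm{ind}_{G'}(\beta\cap x)\in R(G')$ in $R(G')_g$. Applying $\chi_g$ gives the Atiyah--Segal type Lefschetz formula
\[
\mathrm{ind}_{G'}(\beta\cap x)(g)=\langle ch(\beta|_{X^g})(g),\, Ch(x,g)\rangle .
\]
The equivariant Chern character $ch(\beta|_{X^g})(g)$ appears here by applying the ordinary Chern character to the $R(G')_g$-decomposition of $\beta|_{X^g}$. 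This step is formal once $Ch$ is known to be compatible with the Kronecker pairing, which follows from diagram (\ref{compatiblitcherntodd}) and its module structure.

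\textbf{Step 2: transfer to the transverse preimage.} Take $\beta=f^*b$, where $b\in K^0(S^k)$ is the Bott class. Since $f$ is $G$-invariant, $b$ pulls back with trivial $G$-action. I would then show the equivariant analytic analogue of the restriction property of the signature class:
\[
f^*b\cap[D^{sign,G}_X]=j_*\,[D^{sign,G}_Y]
\]
up to the standard power of $2$, where $j:Y\hookrightarrow X$. Using the BLP identification $\mathrm{ind}_{G}[D^{sign,G}_Y](g)=\mathrm{sign}(g,Y)$, Step 1 then gives $\mathrm{sign}(g,Y)$ on the left. On the right it gives $\langle ch(f^*b)|_{X^g},\, Ch([D^{sign,G}_X],g)\rangle$. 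Because the action on $b$ is trivial and $ch(b)=[S^k]$ (in the even case; odd $k$ is handled by suspension), this reduces to $\langle f^*[S^k],\, i_*L^{an}_*(X,g)\rangle$ up to normalization.

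\textbf{Step 3: normalization and main obstacle.} The remaining bookkeeping is the factor $2^{\pm\dim/2}$ relating the signature operator to the $L$-class, which is exactly what $\Psi_2$ absorbs, just as in the non-equivariant identity $\Psi_2 Ch[D^{sign}_X]=L_*(X)$. I expect Step 2 to be the main obstacle. Proving the equivariant cap-product/transversality formula for the wedge-metric signature operator on Witt spaces requires either an equivariant bordism invariance argument from \cite{BLP}, or reducing to the non-equivariant ALMP/ABP statements by working $G'$-equivariantly on a product neighbourhood $Y\times D^k$. My first attempt would be the product-neighbourhood approach: excise to $Y\times \mathbb{R}^k$, where the signature operator splits as the external product of $D^{sign,G}_Y$ with the (trivially acted) Euclidean signature operator. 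The Bott class cap the latter is a point, which yields the formula by excision and equivariant $KK$-product associativity.
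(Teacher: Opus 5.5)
\textbf{Comparison with the paper.} Your reduction is the paper's: use Banagl's characterization of $L_*(X,g)$ by $g$-signatures of transverse preimages $Y=f^{-1}(p)$ of $G$-invariant maps $f:X\to S^i$, and verify $\langle f^*u,\, i_*\Psi_2L^{an}_*(X,g)\rangle=\operatorname{Sign}(g,Y)$. Your computation of that pairing, however, takes a different route.

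The paper applies the normalized analytic Gysin map, $\iota^!_{an}[D^{sign,G}_X]=2^{i/2}[D^{sign,G}_Y]$. It commutes this with localization via a base-change lemma for $Y^g\hookrightarrow X^g$, which needs that inclusion to be normally nonsingular with normal bundle $N_{Y/X}|_{Y^g}$. It then commutes with $Ch$ using triviality of the normal bundle, and finishes with the Thom-class identity $\langle (f|_{X^g})^*u,a\rangle=\deg\,(\iota_g)^!_H(a)$.

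You instead stay on $X$: you cap with $f^*b$, move the cap across $\gamma_g$ by the projection formula, and read off an equivariant index. This is more economical. It needs nothing about $Y^g$, only the $g$-index of $f^*b\cap[D^{sign,G}_X]$. Your Step 2 is equivalent to the paper's Gysin identity: for a trivial normal bundle, capping with the pulled-back Bott class amounts to $j_*\circ\iota^!_{an}$, since the normalized signature and spin$^c$ Thom elements agree. So you can cite \cite[Thm.~6.7]{ABP1} and \cite[Thm.~4.3]{BLP} instead of redoing the product-neighbourhood analysis. The paper's route buys the stronger class-level statement that the Gysin restriction of the localized class to $Y^g$ is the localized class of $Y$.

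\textbf{Stable range.} The classes $f^*u$ do not span $H^i(X;\mathbb{Q})^G$ in general. Serre's theorem needs the stable range $\dim X\le 2(i-1)$; for instance, every map $\mathbb{CP}^2\to S^2$ pulls $u$ back to $0$. Banagl's class in low degrees is defined by passing to $X\times S^k$ with trivial action on $S^k$. You therefore must show that $i_*\Psi_2L^{an}_*(X,g)$ is compatible with $\times[S^k]$. The paper does this using three facts: $[D^{sign,G}_{X\times S^k}]=[D^{sign,G}_X]\boxtimes[D^{sign}_{S^k}]$; multiplicativity of localization and $Ch$; and $Ch[D^{sign}_{S^k}]=2^{k/2}[S^k]$, whose factor $2^{k/2}$ is cancelled by $\Psi_2$.

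\textbf{$G$-invariance.} Your invariance argument proves less than you claim. Conjugation by $h$ sends $i_*\Psi_2L^{an}_*(X,g)$ to the corresponding class for $hgh^{-1}$, supported on $X^{hgh^{-1}}$. So you only get invariance under the centralizer of $g$. Each $f^*u$ with $f$ $G$-invariant is itself $G$-invariant, so the pairings only determine the average $\frac{1}{|G|}\sum_{h\in G}h_*\,i_*\Psi_2L^{an}_*(X,g)$. That average is what your argument identifies with $L_*(X,g)$. The same holds for the paper's proof, which invokes uniqueness without checking invariance of the right-hand side.

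For abelian $G$ nothing is lost, but in general the unaveraged class need not be invariant. Take $S_3$ permuting the factors of $(\mathbb{P}^1)^3$ and $g=(12)$. Then the class is a nonzero multiple of $[\mathrm{pt}\times\mathrm{pt}\times\mathbb{P}^1]\in H_2$, which $(13)$ moves. The orbit-space formula is unaffected, because there one applies $\pi_*$ and $\pi_*h_*=\pi_*$. But the identity itself should be stated after $G$-averaging, or with Banagl's class formed for $\langle g\rangle$, as your closing remark about conjugates half-suspects.
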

In the smooth case, comparison of $L_*(X,g)=i_*IT_{1*}(X,g)$ follows immediately from comparison of cohomological characteristic classes as in proving $L_*(X)=IT_{1*}(X)$. In the singular case, the problem is how to compare them. Since $IT_{1*}(X,g)$ and $L^{an}_*(X,g)$ are  obtained from the localization of $MHC^G_1(IC'^H_X)$
and $[D_X^{sign,G}]$, there are two approaches. First, we can assume the equivariant version of  Conjecture \ref{k-conjecture} holds for $X$, and then show Conjecture \ref{BSYi} holds for the quotient space $X/G$.
\begin{conjecture} \label{G-K-CONJECTURE}(Equivariant $K$-theoretical Hodge index conjecture)
    Let $G$ be a finite group acting algebraically on a  pure-dimensional projective complex algebraic variety $X$. Then the following equality holds in $K_0^{G,an}(X)$
    \[
(\lambda^G_c\circ \alpha^G)([MHC^G_{1}(IC'^H_X)])=\left[D^{sign,G}_X\right]
\]
\end{conjecture}
where $\lambda^G_c,\alpha^G$ are equivariant versions of comparison maps $\lambda_c$ and $\alpha$, explained in section (\ref{equivcomparison}). Naturality of maps implies
\[
\begin{tikzcd}[column sep=large, row sep=large]
G_0^{G}(X)
    \arrow[r, "\operatorname{res}^{G}_{G'}"]
    \arrow[d, "\lambda^G_c\circ \alpha^G"']
&
G_0^{G'}(X)
    \arrow[r]
    \arrow[d, "\lambda^{G'}_c\circ \alpha^{G'}"']
&
G_0^{G'}(X)_{g}
    \arrow[r, "L_{g}"]
    \arrow[d, "(\lambda^{G'}_c\circ \alpha^{G'})_g"']
&
G_0^{G'}(X^{g})_{g}
    \arrow[r, "\td_{*}\otimes\chi_g"]
    \arrow[d, "(\lambda_c\circ \alpha)\otimes 1"']
&
H_*(X^{g},\mathbb C)
    \arrow[d, equal]
\\
K_0^{G,\mathrm{an}}(X)
    \arrow[r, "\operatorname{res}^{G}_{G'}"']
&
K_0^{G',\mathrm{an}}(X)
    \arrow[r]
&
K_0^{G',\mathrm{an}}(X)_{g}
    \arrow[r, "\gamma_{g}"']
&
K_0^{G',\mathrm{an}}(X^{g})_{g}
    \arrow[r, "Ch\otimes\chi_g"']
&
H_*(X^{g},\mathbb C)
\end{tikzcd}
\]
is commutative. Therefore, assuming Conjecture \ref{G-K-CONJECTURE}, Theorems \ref{averagetoddalgebraic}, \ref{banaglintro}, and \ref{bangltolocalizeintro} imply
\begin{theorem}
     Let $G$ be a finite group acting by algebraic automorphisms on a   pure-dimensional complex projective variety $X$. Assume the equivariant $K$-theoretical Hodge index conjecture \ref{G-K-CONJECTURE} holds for $X$. Then Conjecture \ref{BSYi} holds for the quotient space $X/G$.
\end{theorem}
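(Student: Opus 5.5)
The plan is to compare the two averaging formulas, Theorem \ref{averagetoddalgebraic} for $IT_{1*}(X/G)$ and Theorem \ref{banaglintro} for $L_*(X/G)$, term by term in $g\in G$. Three preliminary points must hold first. Since $X$ is a compact pure-dimensional complex projective variety, it admits a $G$-equivariant triangulation; we choose one so that $G$ acts simplicially, as in Theorem \ref{banaglintro}. The action preserves the complex orientation because $G$ acts by algebraic automorphisms. Finally, $X$ is a Witt space, since $IC_X$ is Verdier self-dual. Hence Theorems \ref{banaglintro} and \ref{bangltolocalizeintro} apply, and both $X$ and $X/G$ are compact.

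Fix $g\in G$ and set $G'=\langle g\rangle$. The aim is the equality
\[
\pi_*L_*(X,g)=\pi^g_*IT_{1*}(X,g)=\pi_*i_*IT_{1*}(X,g)\in H_*(X/G,\mathbb{C}).
\]
By Theorem \ref{bangltolocalizeintro}, $L_*(X,g)=i_*\Psi_2\bigl(Ch([D^{sign,G}_X],g)\bigr)$. By definition, $IT_{1*}(X,g)=\Psi_2\,\td_*(MHC^G_1(IC'^H_X),g)$. It therefore suffices to prove
\[
Ch([D^{sign,G}_X],g)=\td_*(MHC^G_1(IC'^H_X),g)\in H_*(X^g,\mathbb{C}).
\]
Conjecture \ref{G-K-CONJECTURE} gives $[D^{sign,G}_X]=(\lambda^G_c\circ\alpha^G)(MHC^G_1(IC'^H_X))$. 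Chasing this class through the commutative diagram displayed just before the statement turns the bottom-row composite $Ch(-,g)$ into the top-row composite $\td_*(-,g)$, which yields the equality. Summing over $g$, dividing by $|G|$, and applying Theorems \ref{averagetoddalgebraic} and \ref{banaglintro} gives $L_*(X/G)=IT_{1*}(X/G)$.

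The main obstacle is justifying the commutativity of that diagram, square by square.

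The first square, restriction from $G$ to $G'$, commutes because the comparison maps are natural under restriction of groups. The second square, localization at the prime ideal of $R(G')$ determined by $g$, commutes because $\lambda^{G'}_c\circ\alpha^{G'}$ is $R(G')$-linear.

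The third square asserts that the localization maps $L_g$ and $\gamma_g$ correspond. Both are defined as inverses of $i_*$ after localization, so it suffices that the comparison maps commute with the proper pushforward $i_*\colon X^g\to X$. This covariant functoriality holds for $\alpha^{G'}$ (equivariant Baum--Fulton--MacPherson) and for $\lambda^{G'}_c$. Then $(\lambda\alpha)_g\circ i_*=i_*\circ(\lambda\alpha)_g$, and inverting $i_*$ on both sides gives commutativity.

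On $X^g$ the group $G'$ acts trivially, so $G_0^{G'}(X^g)\cong G_0(X^g)\otimes R(G')$ and similarly in $K$-homology. These identifications are compatible with the comparison maps, which justifies the vertical arrow $(\lambda_c\circ\alpha)\otimes 1$.

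The last square is exactly diagram (\ref{compatiblitcherntodd}) on $X^g$, tensored with $\chi_g$.

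I would spend most care on the third square, checking that $\lambda^{G'}_c$ is compatible with pushforward along the closed inclusion in the analytic equivariant $K$-homology used in \cite{BLP}. Because $i$ is a closed embedding of compact spaces, I expect this to reduce to naturality of Kasparov $K$-homology under proper maps.
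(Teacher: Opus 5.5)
Your proposal is correct and is essentially the paper's own argument. The paper obtains $IT_{1*}(X,g)=\Psi_2L^{an}_*(X,g)$ (Theorem \ref{identificationGKcond}) by applying the compatibility diagram of Theorem \ref{compatibilityofloclaitions} to the conjectured equality, justified by the same naturality of $\alpha^G,\lambda^G_c$ under restriction and under the proper pushforward $i_*$ that you invoke, and then concludes from Theorem \ref{averagetoddalgebraic} and Corollary \ref{averageLclass} (i.e.\ Theorems \ref{banaglintro} and \ref{Banagl_localization_comparison}), exactly as you do.
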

Second, without assuming the equivariant K-theoretical version, we may instead assume that  Conjecture \ref{BSYi}  holds for all  fixed point sets $X^g$ and that the inclusions $X^g\hookrightarrow X$ satisfy a normally nonsingular type condition called \textit{nice} (Definition \ref{nice}).
 \begin{theorem}\label{conditionalconjectureideniicationintroduction}
     Let $G$ be a finite group acting by algebraic automorphisms on a   pure-dimensional complex projective variety $X$. For every $g\in G$ assume the following: For $G'=\langle g\rangle$, the inclusion of the fixed point set $X^g\hookrightarrow X$ is  \textit{G'-nicely} normally nonsingular, and  Conjecture \ref{BSYi} holds for $X^g$. Then Conjecture \ref{BSYi} holds for the quotient space $X/G$.
 \end{theorem}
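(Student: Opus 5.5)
By Theorem \ref{averagetoddalgebraic} and Theorem \ref{banaglintro}, it suffices to prove for every $g\in G$ that
\[\pi_*L_*(X,g)=\pi^g_*IT_{1*}(X,g).\]
By Theorem \ref{bangltolocalizeintro} the left side equals $\pi^g_*\Psi_2 L^{an}_*(X,g)$. So it is enough to show, in $H_*(X^g,\mathbb{C})$ (or at least after $i_*$),
\[\Psi_2\, Ch([D^{sign,G}_X],g)=\Psi_2\,\td_*(MHC^G_1(IC'^H_X),g).\]
We may replace $G$ by $G'=\langle g\rangle$ throughout, since both localized classes factor through restriction to $G'$.

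\textbf{Analytic side.} We use the nice normally nonsingular structure of $i:X^g\hookrightarrow X$. This gives a $G'$-invariant tubular neighborhood $N$ of $X^g$, which is a $G'$-equivariant complex vector bundle $\nu\to X^g$ on which $g$ acts without eigenvalue $1$ on fibers. Restricting $[D^{sign,G'}_X]$ to the open set $N$ and using excision, the localization $\gamma_g$ becomes inverse Thom isomorphism. Using multiplicativity of the signature operator on the product-type wedge metric near $X^g$, we expect
\[\gamma_g[D^{sign,G'}_X]=[D^{sign}_{X^g}]\cap \frac{\lambda_{-1}\text{-type signature class of }\nu}{\text{Euler class of }\nu}\otimes(\text{character}),\]
in $K^{an}_0(X^g)\otimes R(G')_g$. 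Here ``nice'' should guarantee exactly that the local signature-operator class of $\nu$ splits equivariantly. Applying $Ch\otimes\chi_g$ then gives
\[L^{an}_*(X,g)=\Bigl(\prod_\theta\text{Zagier--Atiyah--Singer factor of }\nu_\theta\Bigr)\cap L^{an}_*(X^g),\]
with $\nu=\bigoplus_\theta\nu_\theta$ the eigenbundle decomposition.

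\textbf{Algebraic side.} Under the same normal nonsingularity, $IC'^H_X$ near $X^g$ is, up to the smooth normal directions, the external product of $IC'^H_{X^g}$ with the constant Hodge module on the fiber of $\nu$. The Lefschetz--Riemann--Roch/localization formula of \cite{CMSS} for $MHC^G_y$ under such a product then gives
\[\td_*(MHC^G_1(IC'^H_X),g)=\Bigl(\prod_\theta \text{holomorphic Lefschetz factor of }\nu_\theta \text{ at } y=1\Bigr)\cap IT_{1*}(X^g)\]
(before $\Psi_2$). This uses the known smooth computation of $T_y^*(\nu,g)$, that is, $\lambda_y$ of the conormal bundle divided by $\lambda_{-1}$, twisted by eigenvalues.

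\textbf{Comparison.} After applying $\Psi_2$, the two normal factors agree, exactly as in the smooth case (Zagier versus Hirzebruch with $y=1$). By hypothesis $L_*(X^g)=IT_{1*}(X^g)$. Hence the two localized classes agree, and pushing forward via $\pi^g_*$ and averaging concludes the proof.

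\textbf{Main obstacle.} The hardest step is the analytic factorization of $\gamma_g[D^{sign,G'}_X]$ as a cap product of $[D^{sign}_{X^g}]$ with a normal class. It requires building a $G'$-invariant wedge metric of product type near $X^g$, and checking that the Banagl--Leichtnam--Piazza class is multiplicative for such products, so that excision to $N$ is legitimate. I would first attempt this via a bordism/homotopy invariance argument that deforms the metric to product form inside $N$.

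The algebraic analogue, namely equivariant compatibility of $MHC^G_y$ with the normally nonsingular product structure, is the second delicate point. I expect it to follow from the Verdier Riemann--Roch for smooth pullbacks and the multiplicativity of $MHC_y$ under external products.
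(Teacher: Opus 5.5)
Your reduction is the same as the paper's. By Theorems \ref{averagetoddalgebraic}, \ref{banaglintro} and \ref{bangltolocalizeintro}, everything comes down to showing $\td_{2*}(MHC^G_1(IC'^H_X),g)=\Psi_2L^{an}_*(X,g)$ in $H_*(X^g,\mathbb{C})$. The analytic factorization you call the main obstacle is already available: under condition (2) of niceness it is \cite[Prop.~6.13]{BLP}, which gives Corollary \ref{chernlocal}.

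The genuine gap is on the algebraic side. You assert that $IC'^H_X$ is, near $X^g$, an external product of $IC'^H_{X^g}$ with the constant Hodge module on the normal fibre. That holds only on a topological (stratified) tubular neighbourhood.
\begin{itemize}
\item There is in general no Zariski, or even holomorphic, neighbourhood of $X^g$ identified with the normal bundle.
\item Mixed Hodge modules, Hodge filtrations, $MHC^G_y$ and $G^G_0$ cannot be transported along a homeomorphism.
\item Verdier--Riemann--Roch for smooth pullbacks needs an algebraic smooth morphism, and in general there is none from a neighbourhood of $X^g$ in $X$ onto $X^g$.
\item For singular $X$, $L_g$ is not computed by restricting to a neighbourhood and applying the smooth formula $i^*(-)\otimes\lambda_{-1}(N^\vee)^{-1}$. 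The smooth-case computations of \cite{CMSS} do not apply here.
\end{itemize}

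What is missing are the two algebraic steps that use conditions (3) and (1) of niceness; your argument uses neither.
\begin{itemize}
\item Condition (3) gives a $G'$-embedding $X\hookrightarrow Z$ into a smooth $Z$ with $X$ and $Z^g$ Tor-independent. Derived base change then collapses the Baum--Fulton--Quart formula \eqref{localform} to $L_g([\mathcal F]_g)=(\lambda_{-1}N^\vee_{X^g/X})_g^{-1}\otimes(i^![\mathcal F])_g$ (Corollary \ref{algebriaclocalfomrulasimplfied}).
\item Condition (1) is what computes $i^!$. Factor $i^!=k^!\circ Sp_G$ through the deformation to the normal cone. The equivariant Sch\"urmann specialization theorem together with $\Psi^{H,G}_p(IC^H_{\mathcal Z^\circ})\cong IC^H_N$ gives $Sp_G(MHC^G_1(IC'^H_X))=MHC^G_1(IC'^H_N)$. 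Only then do you apply Verdier--Riemann--Roch to the smooth projection $N\to X^g$, obtaining $i^!MHC^G_1(IC'^H_X)=\lambda_1(N^\vee)\otimes MHC^G_1(IC'^H_{X^g})$ (Theorem \ref{pullbackofmhc1}).
\end{itemize}
The product structure you want exists algebraically only on the special fibre $N$, and specialization is what moves the problem there. Your final algebraic formula is correct, but your route does not prove it.

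Finally, ``exactly as in the smooth case'' hides a sign check that must be done before applying $L_*(X^g)=IT_{1*}(X^g)$.
\begin{itemize}
\item The convention of \cite{BLP} produces the factor $(-1)^{\dim_{\mathbb C}E_g}$.
\item The analytic complex structure on $N_\theta$ for $\pi<\theta<2\pi$ is conjugate to the holomorphic one.
\item The orientation of $X^g$ induced from $X$ and $E$ differs from the complex orientation by $(-1)^{r_-}$, where $r_-=\sum_{\pi<\theta<2\pi}\operatorname{rk}_{\mathbb C}N_\theta$.
\end{itemize}
These signs cancel, but only after this bookkeeping.
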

This is  Corollary \ref{theoremorbitspace}. We make use of recent results from \cite{BLP} to describe $L^{an}_*(X,g)$ under the assumption that the inclusions of the fixed-point sets are normally nonsingular. An explicit formula for the localized signature class is obtained there, following \cite{R}, only under this assumption. A stronger condition than topological normal nonsingularity, which is the $G'$-nicely normally nonsingular, involving compatibility between the algebraic and topological normal bundles together with a suitable transversality condition, is needed to compute the algebraic localized Hirzebruch classes $IT_{1*}(X,g)$ (Corollary \ref{tdgformula}).
\begin{theorem}
Let $G=\langle g\rangle$ be a finite cyclic group acting on  a  pure-dimensional complex projective variety $X$ by
algebraic automorphisms. If  the inclusion of  fixed point set $X^g\hookrightarrow X$ is  \textit{G-nicely} normally nonsingular with conormal bundle $N^{\vee}_{X^g/X}$, then
     \[L_g(MHC^G_1(IC'^H_X)_g)=\lambda_1(N^{\vee}_{X^g/ X})_g\otimes(\lambda_{-1}N^{\vee}_{X^g/X})_g^{-1}\otimes  MHC^G_1(IC'^H_{X^g})_g \]
\end{theorem}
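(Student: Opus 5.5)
The plan is to factor the localization $L_g=(i_*)^{-1}$ through a derived restriction to $X^g$. Then reduce the statement to two facts. The first is an equivariant self-intersection formula in $G$-theory. The second is an equivariant Verdier--Riemann--Roch formula for $MHC^G_y$ along the inclusion $i:X^g\hookrightarrow X$. I am using Definition \ref{nice} in the following form: $G$-nice normal nonsingularity gives a $G$-invariant regular embedding $i$ whose algebraic normal bundle $N=N_{X^g/X}$ matches the topological normal bundle. It also makes $i$ transversal to a $G$-invariant Whitney stratification of $X$, so that the normal fibres are smooth slices on which $g$ acts linearly.

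\emph{Step 1 (self-intersection and localization).} Since $i$ is a $G$-equivariant regular embedding, $Li^*:G^G_0(X)\to G^G_0(X^g)$ is defined. The equivariant self-intersection formula (via deformation to the normal cone, in the form used in \cite{T1, BFQ}) gives $Li^*i_*(\beta)=\lambda_{-1}(N^\vee)\otimes\beta$ for $\beta\in G^G_0(X^g)$. Because $g$ acts on every fibre of $N$ without eigenvalue $1$, the class $\lambda_{-1}(N^\vee)_g$ is invertible in $K^G_0(X^g)_g\simeq K_0(X^g)\otimes R(G)_g$. This is the standard computation: its image under $\chi_g$ is $\prod(1-\bar\zeta)$ over the eigenvalues $\zeta\neq1$, up to nilpotents. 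Since $i_*$ is an isomorphism after localizing at $g$, writing $\mathcal F=i_*\beta$ gives
\[
L_g(\mathcal F_g)=(\lambda_{-1}N^\vee)_g^{-1}\otimes (Li^*\mathcal F)_g .
\]
It therefore suffices to compute $Li^*MHC^G_1(IC'^H_X)$.

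\emph{Step 2 (restriction of the intersection module).} Transversality of $i$ to the stratification shows that $i^*IC_X^H$ is, up to the shift by the complex codimension $r$ of $X^g$, the intersection Hodge module of $X^g$. Locally $X$ is a product of $X^g$ with a smooth normal slice, and $IC$ of a product with a smooth factor is the exterior product with the constant module. Combining the shifts, this gives $i^*IC'^H_X\cong IC'^H_{X^g}$ as $G$-equivariant objects. Here $G$ acts trivially on the base $X^g$ but with its given linearization on the module.

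\emph{Step 3 (equivariant Verdier--Riemann--Roch).} I then prove
\[
Li^*MHC^G_y(M)=\lambda_y(N^\vee)\otimes MHC^G_y(i^*M)
\]
for $M$ in $\MHM^G(X)$ non-characteristic with respect to $i$. Nonequivariantly this is Schürmann's Verdier--Riemann--Roch for the motivic Chern class. The model case is $M=\Q^H_X$ on smooth $X$: there $\lambda_y(T^\vee_X)|_{X^g}=\lambda_y(T^\vee_{X^g})\lambda_y(N^\vee)$ via the conormal sequence. For general $M$, one reduces to this case using non-characteristic restriction of filtered $\sD$-modules, under which $\gr^F\DR$ commutes with $Li^*$ up to the Koszul factor $\lambda_y(N^\vee)$. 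The equivariant version is obtained by running the same argument inside the weak equivariant derived categories of \cite{CMSS}. Every construction involved (the filtered de Rham complex, the non-characteristic restriction, and the conormal exact sequence) is functorial and hence compatible with the $G$-linearizations. At $y=1$ and $M=IC'^H_X$, combining with Step 2 and then Step 1 gives the asserted formula
\[
L_g(MHC^G_1(IC'^H_X)_g)=\lambda_1(N^\vee)_g\otimes(\lambda_{-1}N^\vee)_g^{-1}\otimes MHC^G_1(IC'^H_{X^g})_g .
\]

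The main obstacle I expect is Step 3 in the equivariant setting, because $X$ is singular. $MHC^G_y$ is defined via local embeddings into smooth $G$-spaces and the weak equivariant formalism, so I must check two things. First, the non-characteristic condition and the Koszul identification can be chosen $G$-equivariantly in these local smooth models. Second, the result is independent of the embeddings. I would first try to choose $G$-equivariant local embeddings $X\supset U\hookrightarrow \mathbb C^{m}\times N_x$, adapted to the nice structure, so that $X^g$ corresponds to $\mathbb C^m\times\{0\}$. The statement then reduces to the smooth, linear-action case, and the $\lambda$-class computation there is explicit.
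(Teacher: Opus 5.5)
\textbf{The gap is in Step 3, and it comes from how you read Definition \ref{nice}.} $G$-nice normal nonsingularity does not include transversality of $X^g$ to a Whitney stratification of $X$, let alone a $G$-invariant one. Condition (1) is Banagl's upward normal nonsingularity, which is a topological condition. It asks for a compatibly stratified tubular neighbourhood modelled on the algebraic normal bundle, together with normal nonsingularity of the exceptional divisor in $\operatorname{Bl}_{X^g\times 0}(X\times\mathbb{C})$. Whitney transversality enters the paper only as a \emph{sufficient} condition, in the first example after Definition \ref{nice}. The proof of Theorem \ref{banagl_ic_nomralcone_z} also says explicitly that the stratifications involved are not assumed $G$-invariant.

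Without such transversality, nothing in the hypotheses ensures that the filtered $\sD$-module underlying $IC^H_X$, in any smooth ambient space, is non-characteristic along $X^g$. Yet non-characteristic restriction is exactly what your formula $Li^*MHC^G_y(M)=\lambda_y(N^\vee)\otimes MHC^G_y(i^*M)$ rests on. Your proposed local models $U\hookrightarrow\mathbb{C}^m\times N_x$ do not help, since they just move the same missing transversality into the chart. A telltale sign is that your argument never uses condition (2) of Definition \ref{upward}.

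\textbf{The missing idea is the deformation to the normal bundle.} It replaces restriction along a possibly characteristic embedding by a hypersurface specialization followed by a smooth pullback. The paper argues as follows.
\begin{enumerate}
\item It factors $i^!=k^!\circ Sp_G$, where $Sp_G=\iota^!\circ(j^*)^{-1}\circ q^*$ is built from $p:\mathcal{Z}^\circ\to\mathbb{C}$.
\item Sch\"urmann's specialization theorem, made equivariant, gives $\iota^!\circ MHC^G_y=(1+y)\,MHC^G_y\circ\Psi_p^{\prime H,G}$. This is a hypersurface statement with no non-characteristic hypothesis.
\item The equivariant form of Banagl's isomorphism $\Psi_p^{H,G}(IC^H_{\mathcal{Z}^\circ})\cong IC^H_N$ is where upward normal nonsingularity, including the exceptional-divisor condition, is used. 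Together with step 2 it gives $Sp_G(MHC^G_y(IC'^H_X))=MHC^G_y(IC'^H_N)$.
\item A Verdier--Riemann--Roch formula is then needed only for the smooth projection $\pi:N\to X^g$. The identity $k^!\circ\pi_K^*=\mathrm{id}$ produces the factor $\lambda_y(N^\vee)$.
\end{enumerate}

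\textbf{Step 2 also needs more care.} Banagl's isomorphism $i^*IC^H_X[-c]\cong IC^H_{X^g}$ is not automatically $G$-equivariant. The paper makes it equivariant by normalizing it to the identity on a $G$-invariant smooth dense open $V_G\subset X^g$. It then uses that $IC^H_{X^g}$ has no nonzero subobjects supported off $V_G$.

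\textbf{Step 1 is correct and takes a genuinely different, more intrinsic route to Corollary \ref{algebriaclocalfomrulasimplfied}.} You use the self-intersection formula $i^!\circ i_*=\lambda_{-1}(N^\vee)\otimes(-)$. It holds for any equivariant regular embedding, because $\mathcal{T}or_q^{\cO_X}(\cO_{X^g},i_*\mathcal{F})\cong\Lambda^q N^\vee\otimes\mathcal{F}$ canonically. Combined with the localization theorem, it gives $L_g=(\lambda_{-1}N^\vee)_g^{-1}\otimes i^!$ without the ambient $Z$ and without the Tor-independence condition (3). You do still need to justify that $(\lambda_{-1}N^\vee)_g$ acts invertibly on $G^G_0(X^g)_g$ for the possibly singular $X^g$. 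The paper gets this by pulling back from the smooth $Z^g$.
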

 This follows from the same arguments as  Banagl's \cite{B}.

\medskip
{\bf Acknowledgments.} I deeply thank Markus Banagl for patiently answering my questions. I thank Eric Leichtnam for helpful discussions. I am especially grateful to my advisor, Lauren\c{t}iu Maxim, for his continued support and many helpful discussions.
\section{Conventions and Notation}\label{convennotaion}
\begin{enumerate}[label=(\alph*)]
    \item For simplicity of notation we assume all fixed point sets $X^g$ are connected. For disconnected $X^g$ the formulas involving localization should be interpreted as sums over their connected components, using the dimension of each component. We assume all real manifolds are even-dimensional. Since our ultimate interest is in complex algebraic varieties, this is not a restriction.
    
\item For a finite group $G$, $R(G)$ denotes the complex representation ring of $G$. For an $R(G)$-module $M$, and $g\in G$, we denote by $M_g$ the localization of $M$ with respect to the prime ideal of $R(G)$ consisting of virtual representations whose character vanishes at $g$. Let
$$\chi_g: R(G)_g \rightarrow \mathbb{C}\qquad \frac{\psi}{\zeta} \mapsto\frac{\psi(g)}{\zeta(g)}$$
\item \label{varietysatisfyconditions} Throughout the paper, we may apply results of \cite{BLP} and \cite{B26} to a compact pure-dimensional complex algebraic variety $X$ equipped with an algebraic action of a finite group $G$. $X$ is an oriented Witt space. By \cite[Prop. 6.7]{BLP}, subanalytic proper actions admit a $G$-equivariant triangulation.   As explained in \cite[Prop. 9.1]{B26}, $X$ can be smoothly stratified, in the sense of \cite{ABP1} by taking the open simplices as strata and $G$ acts by stratified diffeomorphisms. $X$  admits control data giving it the structure of a $G$-Thom-Mather  space, in the sense of \cite[Def. 2.1]{BLP}. Hence the hypotheses needed to apply the analytic results of \cite{BLP} and the simplicial results of \cite{B26} are satisfied.

\item \label{notationconvention}
 $K^{top}_{0}, K^{an}_{0}, K_0^{geo}, G_0$  denote, respectively,  topological complex $K$-homology,  Kasparov $K$-homology $K^{an}_{0}(X)=KK_{0}(C_0(X),\mathbb{C})$ \cite{K},  Baum--Douglas $K$-homology \cite{BD}, and  $G_0(X)=K_0(Coh(X))$ is the Grothendieck group of algebraic coherent sheaves.
 
\item Let $n=d_{\mathbb{R}}X$. In this paper, $n$ is even, hence $K_0\cong K_n$. We consistently use only $K_0$ in this paper.

\item We follow the notation used in \cite{CMSS} for equivariant mixed Hodge modules. The category $\mhm^G(X)$ is the abelian category of $G$-equivariant algebraic mixed Hodge modules on $X$, and $K_0(\mhm^G(X))$ denotes its Grothendieck group. Also $G^G_0(X)=K_0(Coh^G(X))$ denotes the Grothendieck group of the abelian category of $G$-equivariant coherent algebraic sheaves on $X$.  See also Appendix \ref{appendix}.

\item $n$-th homological Adams operation is defined by $\Psi_n|_{H_{2k}}=n^{-k}\operatorname{id}$. Similarly $\Psi^n$ denotes $n$-th cohomological Adams operation $\Psi^n|_{H^{2k}}=n^k\operatorname{id}$.
The subscript $(2)$ in a homological transformation, such as $Ch_{(2)}$ or $\td_{2*}$, denotes composition with the second homological Adams operation $\Psi_2$, i.e., $Ch_{(2)}:=\Psi_2\circ Ch$ and $\td_{2*}:=\Psi_2\circ\td_*$. Likewise,  subscript $(2)$ in a cohomological transformation such as Chern character of vector bundles $ch_{(2)}$ denotes composition with the second cohomological Adams operation. The subscript $(g)$ denotes composition with the evaluation map $\chi_g:R(G)_g\rightarrow\mathbb{C}$, e.g.,
$$Ch_g:=Ch\otimes\chi_g:K_0^{an}(X)\otimes R(G)_g\rightarrow H_*(X,\mathbb{C})$$
\end{enumerate}
\section{Comparison of algebraic, topological, and analytic $K$-homology}
\label{compairsonmaps}
    Let $X$ be a complex projective algebraic variety.  We have the comparison  maps 
\[
G_0(X)\xrightarrow{\alpha} K_0^{top}(X)
\xleftarrow{\underset{\cong}{\phi}}
K_0^{geo}(X)
\xrightarrow{\underset{\cong}{\mu}}
K_0^{an}(X)
\]
where 
\[\alpha:G_0(X) \rightarrow K_0^{top}(X) \]
is the Baum-Fulton-MacPherson Riemann-Roch transformation \cite{BFM2}, where for nonsingular $X$, $\alpha(\mathcal{O}_X)=[X]_K$ is the topological $K$-homology fundamental class of $X$. $\alpha$ is covariant for proper morphisms, and is compatible with cap
products, exterior products, and restrictions.

A class $[(M,E,f)]\in K_0^{geo}(X)$ is represented by a triple where $M$ is a closed even-dimensional $spin^c$ manifold, $E\rightarrow M$ is a complex vector bundle, and $f:M\rightarrow X$ is a continuous map. Equivalence of geometric and analytic $K$-Homology \cite{BHS} is given by 
\[\mu:K_0^{geo}(X) \xrightarrow{\cong} K_0^{an}(X) \]
\[[(M,E,f)]\mapsto f_*[\bD_M^E]\] 
where $f_*:K_0^{an}(M) \rightarrow K_0^{an}(X)$ and $[\bD_M^E]$ denotes the  class in $K_0^{an}(M)$ corresponding to the  $spin^c$ operator on $M$ twisted by $E$. 
\[[\bD_M^E]:=E\cap[\bD_M^{spin^c}]:=[E]\otimes_{C(M)}[\bD_M^{spin^c}] \in K_0^{an}(M)\]
In $KK$-theory, $E$ determines an element
\[
[E]\in KK_0(C(M),C(M))
\]
There is an isomorphism 
\[\phi:K_0^{geo}(X) \xrightarrow{\cong} K_0^{top}(X) \]
\[[(M,E,f)]\mapsto f_*([E]\cap [M]_K)\]
where $\cap$ denotes the $K^0$-module structure of $K^{top}_0$; $\cap:K^* \otimes K_*\rightarrow K_*$. 

Since we will use various results of \cite{ABP2}, let us express  $\phi$ in terms of comparison maps given there. \cite{ABP2} uses  a bordism-like description of $K$-homology due to Jakob \cite{J}, denoted by $K^b_0(X)$ with an isomorphism \cite[\S 4.1]{ABP2}
\[\varphi: K^b_0(X)\xrightarrow{\cong} K_0^{top}(X)\]
and an 
isomorphism \cite[\S 4.2]{ABP2}
\[\kappa: K^b_0(X)\xrightarrow{\cong} K_0^{an}(X)\]
then we have
\[\phi=\varphi\circ \kappa^{-1}\circ\mu\]
in the notation of \cite[Thm. 6.6]{ABP2}, we set 
\begin{equation}\label{lambda-cabp2}
    \lambda_c:=\mu \circ \phi^{-1} :K_0^{top}(X) \rightarrow K_0^{an}(X)
\end{equation}
We denote the Chern character on vector bundles on a manifold $M$ as
\[ch:K^0(M)\rightarrow H^*(M,\mathbb{C})\] and the homological Chern character \cite{BFM2} as
\[ch^{top}:K_0^{top}(X)\rightarrow H_*(X,\mathbb{C})\]
The Chern character on $K_0^{geo}(X)$ is defined by \cite[\S~2.6]{BE}
\[ch^{geo}(M,E,f)=f_*(ch(E)\cup Td^{spin^c}(TM)\cap [M])\]
\[Td^{spin^c}(TM)=e^{c_1(L_{M})/2}\hat{A}(TM)\]
where $L_{M}$ is the determinant line bundle of the $spin^c$ structure. We may define the analytic Chern character by  \cite[\S~2.6]{BE}
\[Ch:=ch^{geo}\circ\mu^{-1}:K_0^{an}(X) \rightarrow  H_*(X,\mathbb{C})\]
There is the Baum--Fulton--MacPherson Todd class transformation \cite{BFM}, 
\[\td_*:G_0(X) \rightarrow  H_*(X,\mathbb{C})\]
such that for a non-singular $X$, $\td_*(\mathcal{O}_X)=Td(X)\cap [X]$.

 We have the compatibility of topological, analytic Chern characters, and Todd  class transformation;
\begin{equation}\label{compatiblitcherntodd2}
\begin{tikzcd}
K_0^{geo}(X)
    \arrow[d, "\phi"]
    \arrow[rrr, "\mu"]
    \arrow[ddrrr, "ch^{geo}"]
&&&
K_0^{an}(X)
    \arrow[dd, "Ch"]
\\
K_0^{top}(X)
    \arrow[drrr, "ch^{top}"]
\\
G_0(X)
    \arrow[u, "\alpha"]
    \arrow[rrr, "\td_*"]
&&&
H_*(X,\mathbb{C})
\end{tikzcd}
\end{equation}
 For  $\td_*= ch^{top}\circ \alpha$ see \cite[\S~5]{BFM2}, and for $ch^{geo}=ch^{top}\circ \phi$ see \cite{ABP2,J}. Note that the normalization we use  for $Ch$ is such that 
 \begin{equation}\label{ch=mathcalL}
     Ch(\left[D_X^{sign}\right])=L^{an}_*(X)
 \end{equation}
 where $\Psi_2L^{an}_*(X)=L_*(X)$ is the Goresky--MacPherson $L$-class and  $\left[D_X^{sign}\right] \in K_0^{an}(X)$ is defined in \cite{MW,ALMP}. This is the convention used in \cite{BLP} and one that makes (\ref{compatiblitcherntodd2}) commutative.
\subsection{Equivariant comparison maps}\label{equivcomparison}
Let $G=\langle g\rangle$ be a finite cyclic group acting on a complex projective algebraic variety $X$ by algebraic automorphisms. In this paper we only need cyclic groups, as we will mention, the equivariant localized classes only depend on the cyclic subgroup $\langle g\rangle$, hence we'll simply restrict to this subgroup. Denote  the $G$-equivariant  $K$-homologies by $K^{G,top}_{0}(X), K^{G,an}_{0}(X):=KK^G_0(C_0(X),\mathbb{C}), K_0^{G,geo}(X), G^G_0(X)$.  There are equivariant versions of comparison maps. These are natural maps satisfying obvious functoriality in their equivariant category.

The equivariant version of $\mu$ was studied in \cite{BOSW}.
\begin{theorem}\cite{BOSW} \label{BOSW}
    There is a natural isomorphism
    \[\mu^G:K_0^{G,geo}(X) \xrightarrow{\cong} K_0^{G,an}(X) \]
\end{theorem}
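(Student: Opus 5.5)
The statement is a cited result, Theorem \ref{BOSW} from \cite{BOSW}. Its proof follows the non-equivariant argument of Baum--Higson--Schick \cite{BHS}, adapted to a finite group $G$ acting on $X$ (here $G=\langle g\rangle$ is cyclic, but nothing depends on that).

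\textbf{Step 1: define the map.} Define $\mu^G$ on an equivariant geometric cycle $(M,E,f)$ by
\[
(M,E,f)\mapsto f_*[\bD_M^E]\in KK^G_0(C_0(X),\mathbb{C}).
\]
Here $M$ is a closed $G$-$spin^c$ manifold, $E$ is a $G$-equivariant vector bundle, and $f$ is $G$-equivariant. The $G$-invariant $spin^c$ Dirac operator twisted by $E$ defines an equivariant Kasparov class.

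\textbf{Step 2: well-definedness.} Check that $\mu^G$ respects the three relations of Baum--Douglas cycles, each in its equivariant form.
\begin{itemize}
\item \emph{Disjoint union/direct sum.} This is additivity of Dirac classes.
\item \emph{Bordism.} Equivariant bordism invariance of the index class follows from the boundary map in equivariant $KK$-theory. For a $G$-manifold with boundary $W$, one has $\partial[\bD_W]=[\bD_{\partial W}]$.
\item \emph{Vector bundle modification.} Take the sphere bundle $S(H\oplus\mathbb{R})$ of an equivariant $spin^c$ bundle $H$. The needed input is the equivariant Bott periodicity theorem: the twisted Dirac operator on the fibre spheres has $G$-index equal to the trivial representation. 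This follows from Atiyah's equivariant Bott periodicity.
\end{itemize}
Naturality of $\mu^G$ under equivariant maps is then immediate from the definition.

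\textbf{Step 3: isomorphism.} Both sides are equivariant homology theories on finite $G$-CW complexes.
\begin{itemize}
\item \emph{Homology-theory axioms.} The geometric side is handled by the equivariant analogue of Baum--Douglas: relative cycles and excision via bordism. The analytic side is handled by the long exact sequences of $KK^G$.
\item \emph{Naturality in the sequences.} Check that $\mu^G$ commutes with the boundary maps. This reduces to the boundary computation already used in Step 2.
\item \emph{Equivariant cells.} By a five-lemma induction over the cells $G/H\times D^k$, it suffices to treat an orbit $X=G/H$. There both sides identify with $R(H)$ in even degree and $0$ in odd degree. On the geometric side use induction $G\times_H(-)$ and cycles over a point. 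On the analytic side use Green--Julg, $KK^G(C(G/H),\mathbb{C})\cong KK^H(\mathbb{C},\mathbb{C})\cong R(H)$. Show $\mu^G$ is the identity on $R(H)$ by evaluating on the point cycle carrying a representation $V$.
\item \emph{Passage to $X$.} A compact $G$-space $X$ such as our projective variety admits a finite $G$-CW structure, by the equivariant triangulation of Convention \ref{varietysatisfyconditions}. Hence the finite cell induction applies.
\end{itemize}

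\textbf{Main obstacle.} The main obstacle is the homology-theory structure on the geometric side: constructing the relative theory and proving excision and exactness equivariantly. This requires $G$-equivariant normal bundle and tubular neighbourhood constructions, and equivariant $spin^c$ structures on them, to perform vector bundle modification and cutting along transverse preimages. For finite $G$, equivariant transversality can fail, so one must avoid it. The approach I would try first is the \cite{BHS} strategy of using normal bordism and the Thom isomorphism in place of transversality. Where that is insufficient, I would enlarge vector bundle modification to allow an equivariant Thom class for representation bundles.
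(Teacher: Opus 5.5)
The paper gives no argument for this statement; it is quoted from \cite{BOSW}, so your sketch has to stand on its own. Steps 1 and 2 are essentially fine. One caveat: for vector bundle modification you need the family version of equivariant Bott periodicity plus the Kasparov product formula for the Dirac class of a sphere bundle, not only the $G$-index of a single fibre.

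The genuine gap is Step 3. It rests on two claims you never prove:
(i) $K^{G,geo}_*$ has natural long exact sequences and excision on finite $G$-CW pairs;
(ii) $K^{H,geo}_0(pt)\cong R(H)$ and $K^{H,geo}_1(pt)=0$.
You identify (i) as the main obstacle and then leave it open, but it is the entire difficulty. Even non-equivariantly a direct geometric proof of excision is delicate, and the proof in \cite{BHS} is organised so as never to need it. Claim (ii) is not free either. Evaluating $\mu^G$ on point cycles $(pt,V)$ only shows that $V\mapsto(pt,V)$ splits $\mu^G$. To get injectivity you must show that every $H$-cycle $(M,E)$ over a point is equivalent to a combination of point cycles. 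That already requires an $H$-embedding of $M$ into a complex representation, vector bundle modification along the normal bundle, and equivariant Bott periodicity. As it stands, the proposal reduces the theorem to its hard part instead of proving it.

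The missing idea is to bypass the homology-theory axioms on the geometric side. This is the route of \cite{BHS}, which describes its argument as a simplification of the equivariant one.
\begin{enumerate}
\item Choose a compact $G$-$spin^c$ manifold with boundary $W$ of which $X$ is a $G$-retract, for instance a smooth $G$-invariant neighbourhood of $X$ in a complex $G$-representation. Since $\mu^G_X$ is then a retract of $\mu^G_W$, it suffices to treat $W$.
\item Build a homomorphism $\Phi$ from $K_G^*(W,\partial W)$, the equivariant $K$-theory with compact supports in the interior, to $K^{G,geo}_*(W)$, e.g.\ via the double of $W$.
\item Show $\Phi$ is surjective: embed an arbitrary cycle $M$ equivariantly in $W\times V$ and perform vector bundle modification along its $G$-$spin^c$ normal bundle.
\item Check that $\mu^G\circ\Phi$ is the equivariant Poincar\'e--Lefschetz duality isomorphism $K_G^*(W,\partial W)\cong K^{G,an}_*(W)$.
\end{enumerate}
Surjectivity of $\Phi$ together with this identity makes $\Phi$ and $\mu^G$ bijective in all degrees at once. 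This needs no cell induction, no separate orbit computation and no equivariant transversality. Exactness and excision for $K^{G,geo}_*$ then follow as corollaries. In other words, the \cite{BHS} normal-bordism/Thom argument you mention should replace your Step 3, not be used as a lemma inside it.
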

A class in $K_0^{G,geo}(X)$ is a triple $[(M,E,f)]$ where $M$ is a $G$-$spin^c$ manifold, $E$ is a complex $G$-vector bundle on $M$, and $f:M\rightarrow X$ is $G$-equivariant.

We can consider the equivariant version of $\phi$:
  \[\phi^G:K_0^{G,geo}(X) \rightarrow K_0^{G,top}(X) \]
\[[(M,E,f)]\mapsto f_*([E]\cap [M]_K)\]
which makes sense since $E\in K^0_G(M), [M]_K\in K^{G,top}_0(M)$. As
 $K_0^{G,an}(X)$ is naturally isomorphic to  $K_0^{G,top}(X)$ \cite{Sch}, and $\mu^G$ is naturally  isomorphic, $\phi^G$ is also isomorphism. Set
 \[ \lambda^G_c=\mu^G\circ(\phi^G)^{-1}:K_0^{G,top}(X) \rightarrow K_0^{G,an}(X)\]
As explained in \cite[\S~5]{T2}, the constructions of \cite{BFM2} are all
sufficiently natural to yield the equivariant analogs without change. More precisely, for a possibly singular complex $G$-variety $X$, we can choose a $G$-equivariant closed embedding $j:X\hookrightarrow M$, with $M$ being a smooth $G$-variety. We then use the equivariant relative algebraic $K$-group $K^{G,alg}_X(M)$ defined from bounded complexes of $G$-equivariant algebraic vector bundles on $M$ that are exact on $M\setminus X$, then
\[K^{G,alg}_X(M)\cong G_0^G(X)\]
Algebraic-to-topological comparison for equivariant vector-bundle complexes gives a forgetful map
\[K^{G,alg}_X(M) \rightarrow K^{G,top}_X(M)\]
Finally, an equivariant Alexander duality for the smooth complex $G$-manifold $M$ gives
\[K^{G,top}_X(M) \cong K_0^{G,top}(X)\]
\begin{theorem}\cite{T2}
    There is a natural map
    \[\alpha^G:G_0^{G}(X) \rightarrow K_0^{G,top}(X) \]
\end{theorem}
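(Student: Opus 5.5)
The statement asserts the existence of a natural map $\alpha^G:G_0^G(X)\to K_0^{G,top}(X)$. I would assemble it as the composite of the three maps just described, check that the result does not depend on the auxiliary choices, and then verify naturality.

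\textbf{Step 1 (choice of embedding).} Since $X$ is projective and $G$ is finite, I choose a $G$-equivariant closed embedding $j:X\hookrightarrow M$ into a smooth projective $G$-variety. Concretely, I take a $G$-linearized very ample line bundle, for instance $\bigotimes_{h\in G}h^*\mathcal{O}(1)$, and embed $X$ into $\mathbb{P}(V)$ with $V$ a finite-dimensional $G$-representation.

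\textbf{Step 2 (resolutions).} On a smooth quasi-projective $G$-variety, every $G$-equivariant coherent sheaf admits a finite resolution by $G$-equivariant locally free sheaves. This is Thomason's equivariant resolution property. It gives the isomorphism $K^{G,alg}_X(M)\cong G_0^G(X)$: a coherent sheaf on $X$, pushed forward by $j_*$ and resolved on $M$, yields a complex that is exact off $X$.

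\textbf{Step 3 (the composite).} Forgetting the algebraic structure sends such a complex to a class in $K^{G,top}_X(M)$, the equivariant topological $K$-theory with supports in $X$. Equivariant Alexander duality for the $G$-manifold $M$ then identifies $K^{G,top}_X(M)\cong K_0^{G,top}(X)$. Because $M$ is a complex, hence equivariantly $spin^c$, manifold, this duality is given by capping with the equivariant fundamental class $[M]_K\in K^{G,top}_0(M)$. I define $\alpha^G$ as the composite of the inverse of the Step 2 isomorphism, the forgetful map, and this duality isomorphism.

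\textbf{Step 4 (independence of the embedding).} I expect this to be the main obstacle. Given two embeddings $X\hookrightarrow M_1$ and $X\hookrightarrow M_2$, I compare both with the diagonal embedding $X\hookrightarrow M_1\times M_2$. So it suffices to treat a $G$-equivariant smooth projection $p:M'\to M$ together with a section of $p$ over $X$. Here I would follow the argument of \cite{BFM2} line by line, keeping the $G$-action throughout:
\begin{enumerate}
\item the algebraic side uses the Koszul resolution of the zero section of the relevant equivariant normal bundle;
\item the topological side uses the equivariant Thom isomorphism;
\item the two agree because the Koszul complex represents the equivariant $K$-theoretic Thom class.
\end{enumerate}

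\textbf{Step 5 (naturality).} For covariance along proper equivariant maps $f:X\to Y$, I factor $f$ as a closed embedding $X\hookrightarrow Y\times\mathbb{P}(W)$ followed by the projection to $Y$, with everything $G$-equivariant. For the projection I use the equivariant Riemann--Roch compatibility of \cite{BFM2}, again with the Koszul and Thom comparison. For the closed embedding the compatibility holds by construction of the supported groups.

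Compatibility with restriction to subgroups $G'\subset G$ is immediate, since every ingredient is compatible with restricting the action. Compatibility with cap products by $K^0_G$ and with exterior products follows as in the non-equivariant case. Finally, for smooth $X$ one takes $M=X$, which gives $\alpha^G(\mathcal{O}_X)=[X]_K$.
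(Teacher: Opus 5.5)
Your proposal is correct and follows the same route as the paper: a $G$-equivariant closed embedding $X\hookrightarrow M$ into a smooth $G$-variety, the identification $K^{G,alg}_X(M)\cong G_0^G(X)$ via equivariant locally free resolutions, the forgetful map to $K^{G,top}_X(M)$, and equivariant Alexander duality. The paper defers independence of the embedding and naturality to \cite{T2} and the naturality of the constructions of \cite{BFM2}; you sketch those steps in more detail, at the same level of rigor.
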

\subsection{Localization in $K_0^{G,top}, K_0^{G,an}$, and $G_0^G$}
There are $K$-homology localization maps in topological, analytic, and algebraic $K$-homology, originally studied by Atiyah and Segal  \cite{AS2} in the context of Lefschetz fixed-point theorems on manifolds. The localization theorem in topological $K$-homology was proved by Rosenberg--Weinberger \cite[Thm.~2.4]{RW},  the localization theorem in analytic $K$-homology was proved by Rosenberg and Banagl--Leichtnam--Piazza \cite[Prop.~6.6]{R, BLP}, and the localization theorems in $G_0$-theory of algebraic coherent sheaves were studied by Baum--Fulton--Quart, Thomason, Takeda \cite{BFQ,Q79, T1,Ta}.

Let $G=\langle g\rangle$ be a finite cyclic group. For the sake of notation, we may assume the fixed point set $X^g$ is connected. This is not a restriction as the localization formula will involve a sum over the connected components. Let $i:X^g\rightarrow X$ be the inclusion.
\begin{theorem} \cite[Thm. 2.4]{RW}  Let $X$ be a compact $G$-CW-complex. The inclusion  $i:X^g\rightarrow X$ induces an isomorphism    $$(i^{top})_*:K_0^{G,top}(X^g)_{g}\xrightarrow{\cong} K_0^{G,top}(X)_{{g}}$$ 
\end{theorem}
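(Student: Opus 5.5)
The approach is the classical Atiyah--Segal/Segal localization argument, carried out in equivariant $K$-homology of compact $G$-CW-complexes via the long exact sequence of a pair. Since $G=\langle g\rangle$ is cyclic, I will use the ideal $\mathfrak p\subset R(G)$ of virtual characters vanishing at $g$, which is prime, together with the fact that localization at $\mathfrak p$ is exact. Set $U=X\setminus X^g$, which is $G$-invariant. The pair $(X,X^g)$ is a $G$-CW-pair, so $X^g$ is a subcomplex after a suitable subdivision. The long exact sequence of equivariant $K$-homology for this pair is
\[
\cdots\to K^{G,top}_*(X^g)\xrightarrow{i_*}K^{G,top}_*(X)\to K^{G,top}_*(X,X^g)\to K^{G,top}_{*-1}(X^g)\to\cdots
\]
and it remains exact after localizing at $g$. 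So it suffices to show $K^{G,top}_*(X,X^g)_g=0$ in both degrees.

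To show this, I will induct over the relative $G$-cells of $(X,X^g)$, whose number is finite by compactness. Each such cell has the form $G/H\times(D^k,S^{k-1})$ with $H$ the isotropy group of a point not fixed by $g$, so $g\notin H$. Excision and the Mayer--Vietoris (or pair) exact sequences reduce the vanishing, again using exactness of localization and the five lemma, to the vanishing of $K^{G,top}_*(G/H\times(D^k,S^{k-1}))_g$. That group is a suspension of $K^{G}_*(G/H)\cong K^H_*(pt)$, which is either $R(H)$ or $0$, viewed as an $R(G)$-module through restriction.

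The key point, and essentially the only non-formal step, is that $R(H)_{\mathfrak p}=0$ when $g\notin H$. To see it, I will exhibit an element of $R(G)\setminus\mathfrak p$ that annihilates $R(H)$. Take $\xi=[\mathbb C[G/H]]-|G/H|\cdot 1$. Its character at $g$ is $0-|G/H|\neq 0$, since $g$ has no fixed cosets in $G/H$ (this uses that $G$ is abelian and $g\notin H$). On the other hand, its restriction to $H$ is $0$, because $H$ acts trivially on $G/H$ when $G$ is abelian. Hence $\xi$ acts by zero on the $R(G)$-module $R(H)$ while being invertible after localization, so the localized module vanishes.

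I expect the main obstacle to be the bookkeeping needed to make the cell-induction legitimate in equivariant $K$-homology: excision for $G$-CW pairs, and the identification $K^G_*(G/H\times -)\cong K^H_*(-)$ as $R(G)$-modules. Both are standard properties of the equivariant homology theory, which is represented, for example, by the equivariant $K$-theory spectrum, and I would cite them. Once they are in place, combining the vanishing of $K^{G,top}_*(X,X^g)_g$ with the localized long exact sequence shows that $(i^{top})_*$ is an isomorphism after localization at $g$.
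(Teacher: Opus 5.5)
Your argument is correct. It is essentially the standard Segal-style localization proof that underlies the cited result of Rosenberg--Weinberger; the paper itself gives no proof beyond that citation. The structure is: pass to the localized long exact sequence of the pair $(X,X^g)$, reduce by cell induction to orbits $G/H$ with $g\notin H$, and kill $R(H)_g$ using an element of $R(G)$ whose character does not vanish at $g$. One small simplification is that no subdivision is needed, since $X^g=X^G$ is automatically a $G$-subcomplex of any $G$-CW complex.
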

Let the inverse of $(i^{top})_*$ be
\[\delta_g:=(i^{top})_*^{-1}:K_0^{G,top}(X)_{{g}}\xrightarrow[]{\cong}K_0^{G,top}(X^g)_{g}\]
\begin{theorem} \label{inverseoflocalization}\cite{R}, \cite[Prop. 6.6]{BLP}  Let $X$ be a compact $G$-CW-complex. There is an isomorphism    $$(i^{an})_*:K_0^{G,an}(X^g)_{g}\xrightarrow{\cong} K_0^{G,an}(X)_{{g}}$$ 
defined by Kasparov product from the left with $[i^*]\in KK^{G}_0(C(X),C(X^g))$, a Kasparov bimodule associated with the homomorphism of $C^*$-algebras $C(X)\rightarrow C(X^g)$ induced from the restriction.
\end{theorem}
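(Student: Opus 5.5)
The plan is to prove the statement directly in Kasparov's equivariant $KK$-theory. The argument combines the six-term exact sequence for the $G$-invariant ideal $C_0(X\setminus X^g)\subset C(X)$ with a vanishing result for the localized $K$-homology of the complement. Here $G=\langle g\rangle$ is cyclic, hence abelian.

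First, set $U:=X\setminus X^g$, an open $G$-invariant subset. The extension
\[0\to C_0(U)\to C(X)\xrightarrow{i^*} C(X^g)\to 0\]
is $G$-equivariant and semisplit: $X$ is a compact $G$-CW-complex and $X^g$ is a subcomplex, so there is an equivariant completely positive splitting obtained by averaging a non-equivariant one over $G$. Kasparov's six-term exact sequence in $KK^G_*(-,\mathbb{C})$ then reads
\[\cdots\to K^{G,an}_{1}(U)\to K^{G,an}_0(X^g)\xrightarrow{(i^{an})_*}K^{G,an}_0(X)\to K^{G,an}_0(U)\to\cdots,\]
where $(i^{an})_*$ is the Kasparov product with $[i^*]\in KK^G_0(C(X),C(X^g))$. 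This is an exact sequence of $R(G)$-modules, and localization at the prime ideal $\mathfrak p_g$ is exact. Hence it suffices to show that $K^{G,an}_*(U)_g=0$ in both degrees.

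For the vanishing, I would use the relative $G$-CW structure of $(X,X^g)$. Its cells not in $X^g$ have the form $G/H\times D^k$ with $g\notin H$, so $H$ is a proper subgroup of $G$. Filtering $U$ by the open skeleta $U_k$ gives further equivariant semisplit extensions. Applying the five lemma to the localized six-term sequences, finitely many times since $X$ is compact, reduces the problem to showing
\[K^{G,an}_*\big(C_0(G/H\times \mathring D^k)\big)_g=0 .\]
By the equivariant Bott/suspension isomorphism for the trivial $G$-action on $\mathring D^k\cong\mathbb R^k$ and the induction isomorphism, this group is $K^{H,an}_{*-k}(\mathrm{pt})$. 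This is $R(H)$ or $0$, regarded as an $R(G)$-module via restriction.

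The key point is to find an element $x\in R(G)\setminus\mathfrak p_g$ that annihilates every $R(G)$-module of the form $\mathrm{Res}^G_H$-module. Take
\[x=[G:H]\cdot 1-[\mathbb C[G/H]].\]
Since $g\notin H$ and $G$ is abelian, $g$ acts freely on $G/H$. So $\chi_{\mathbb C[G/H]}(g)=0$, and therefore $x(g)=[G:H]\neq0$. On the other hand, $H$ acts trivially on $G/H$ because $G$ is abelian, so $\mathrm{Res}^G_H(x)=0$. Since the module structure factors through $R(G)\to R(H)$, multiplication by $x$ is zero on $R(H)$ but invertible after localization, which kills $R(H)_g$. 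Therefore $K^{G,an}_*(U)_g=0$, and $(i^{an})_*$ is an isomorphism after localization.

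The main obstacle I expect is the bookkeeping in the cell-by-cell induction. One must check that each skeletal inclusion gives an equivariantly semisplit extension, so that Kasparov's exact sequences exist in the contravariant variable. One must also check that the identifications of $KK^G(C_0(G/H\times\mathbb R^k),\mathbb C)$ with $R(H)$ are $R(G)$-linear. If this becomes delicate, I would fall back on the natural isomorphism $K_0^{G,an}\cong K_0^{G,top}$ of \cite{Sch}, which is compatible with proper pushforwards. I would then transport the Rosenberg--Weinberger theorem \cite[Thm.~2.4]{RW} through it, after checking that under this isomorphism $(i^{top})_*$ corresponds to the Kasparov product with $[i^*]$.
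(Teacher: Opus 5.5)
Your argument is correct. The paper itself gives no proof of this statement; it imports it from \cite{R} and \cite[Prop.~6.6]{BLP}. According to Remark~\ref{cwlocalizationblp}, the cited result is stated for compact Lie groups, under the hypothesis that every orbit is $G$-spin$^c$. In that generality, the role of this hypothesis is to identify the $K$-homology of an orbit $G/H$ with $R(H)$ via equivariant Poincar\'e duality, after which Segal's vanishing of $R(H)_g$ finishes the argument.

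Your route has the same overall structure: the ideal sequence for $C_0(X\setminus X^g)\subset C(X)$, reduction to orbit types through the relative cell filtration, and vanishing of the localized orbit groups. The difference is in the key lemma. For finite cyclic $G$ you replace spin$^c$ duality by the induction isomorphism $KK^G_*(C(G/H),\mathbb{C})\cong KK^H_*(\mathbb{C},\mathbb{C})$. You also replace Segal's support argument by the explicit annihilator $x=[G:H]-[\mathbb{C}[G/H]]$. This makes the proof self-contained. It also shows that the spin$^c$ condition in the Remark is vacuous here, since the orbits of a finite group are zero-dimensional.

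Two small points of precision:
\begin{enumerate}
\item The equivariant semisplitting comes from nuclearity of $C(X^g)$ (Choi--Effros) followed by averaging over $G$, not from the CW structure.
\item The filtration should be by the locally closed sets $U_k=(X^g\cup X^{(k)})\setminus X^g$, in which $U_{k-1}$ is closed. Then $C_0(U_k\setminus U_{k-1})$ is an ideal in $C_0(U_k)$, and plain exactness of the localized six-term sequences gives $K^{G,an}_*(U_k)_g=0$ inductively; no five lemma is needed.
\end{enumerate}
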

Let the inverse of $(i^{an})_*$ be
\[\gamma_{{g}}:=(i^{an})_*^{-1}:K_0^{G,an}(X)_{{g}}\xrightarrow[]{\cong}K_0^{G,an}(X^g)_{g}\]
\begin{remark}\label{cwlocalizationblp}
   In \cite[Prop. 6.6]{BLP}, the localization is defined for any compact Lie group $G$. By \cite[Prop. 6.7]{BLP}, if $X$ is a locally compact subanalytic set and $G$ is a subanalytic proper transformation group of $X$, then $X$ admits  a $G$-equivariant triangulation. Finite groups are subanalytic groups. When every orbit is $G-spin^c$, the localization map is an isomorphism. This condition is satisfied when $G$ is abelian. In this paper, all analytic localization maps are taken with respect to a finite cyclic group.
\end{remark}
Now we discuss algebraic localization in $G$-theory. Although the localization is available in general for a finite type scheme, we restrict to the quasi-projective case in order to  apply  Lefschetz–Riemann–Roch transformation of \cite{BFQ} later on.
\begin{theorem}(\cite[Thm.~ 1.1]{Ta}, \cite{Q79}) \label{coherencetraceformula1} Let $X$ be a quasi-projective complex algebraic variety, with an algebraic action by a finite cyclic group $G=\langle g\rangle$.  The inclusion  $i:X^g\rightarrow X$ induces an isomorphism 
$$(i^{alg})_*:G_0^{G}(X^g)_{g}\xrightarrow{\cong} G_0^{G}(X)_{{g}}$$ 
\end{theorem}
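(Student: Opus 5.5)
The plan follows Thomason's approach to concentration in equivariant $G$-theory. We use the localization long exact sequence for $G$-equivariant coherent sheaves together with a vanishing statement on the open complement. Set $U:=X\setminus X^g$, which is a $G$-stable open subvariety, and let $j:U\hookrightarrow X$ be the open inclusion. Since $G=\langle g\rangle$ is finite and acts algebraically on the quasi-projective variety $X$, the equivariant localization sequence
\[
G_1^G(U)\rightarrow G_0^G(X^g)\xrightarrow{(i^{alg})_*} G_0^G(X)\xrightarrow{j^*} G_0^G(U)\rightarrow 0
\]
is an exact sequence of $R(G)$-modules. It comes from Quillen's dévissage and localization applied to the abelian category $Coh^G(X)$, with $Coh^G_{X^g}(X)$ as the Serre subcategory. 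Localization at the prime ideal $\mathfrak p_g\subset R(G)$ is exact. Hence it suffices to show that $G_*^G(U)_g=0$ in degrees $0$ and $1$, and more generally in all degrees.

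To prove the vanishing we stratify $U$ by stabilizer type. Every point of $U$ has stabilizer a proper subgroup $H\subsetneq G$, because $g$ generates $G$ and fixes no point of $U$. The next step is to filter $U$ by closed $G$-stable subvarieties $U^{H}$, taken over the finitely many proper subgroups $H$. This decomposes $U$ into locally closed $G$-stable pieces $S_H$ on which the stabilizer is exactly $H$. Applying localization sequences again and using noetherian induction, we reduce to showing $G_*^G(S_H)_g=0$.

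On $S_H$ the quotient $G/H$ acts freely. Quasi-projectivity is used here to guarantee that the geometric quotient $S_H/(G/H)$ exists as a variety. Equivariant descent then identifies $G_*^G(S_H)$ with $G_*^H(S_H/(G/H))$, where $H$ acts trivially on the quotient. In particular $G_*^G(S_H)$ is a module over $R(G)$ through the restriction $R(G)\to R(H)$. A module over $R(H)$ becomes zero after localizing at $\mathfrak p_g$ whenever $H$ does not contain a conjugate of $g$; this is Segal's description of the support of $R(H)$ as an $R(G)$-module. Concretely, we exhibit an element $\zeta\in R(G)$ with $\zeta(g)\neq0$ and $\operatorname{res}^G_H\zeta=0$. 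For a cyclic group take $\zeta=\sum_{\chi|_H=1}\chi$ up to normalization, i.e.\ $|G/H|$ times the projector onto $H$-invariant characters restricted appropriately. Then $\zeta(g)=0$ would force $g\in H$, which is impossible, so $\zeta(g)\ne 0$. This $\zeta$ acts by zero on the module, and since it becomes a unit in $R(G)_g$, the localized module vanishes.

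The main obstacle is the descent step for the free $G/H$-action combined with the nontrivial $H$-action. One has to check that $Coh^G(S_H)$ is equivalent to $H$-equivariant coherent sheaves on the quotient, so that the $R(G)$-module structure truly factors through $R(H)$. The alternative is to cite the general results of Thomason and Takeda, or Quart's argument, which is what I expect the author to do. Once this is settled, the remaining steps are the exactness of localization and the vanishing, both formal. Together they give that $(i^{alg})_*$ is surjective and, via the vanishing of $G_1^G(U)_g$, injective after localization.
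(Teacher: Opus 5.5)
The paper gives no argument for this statement; it only cites Takeda and Quart. Your skeleton is the standard route behind those references, and those reductions are fine: the localization sequence for the Serre subcategory $Coh^G_{X^g}(X)\subset Coh^G(X)$, exactness of localization at $\mathfrak p_g$, and reduction via the stabilizer stratification of $U=X\setminus X^g$ to $G^G_*(S_H)_g=0$ for proper $H$. The gap is in the vanishing step.

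\textbf{The element is backwards.} For $\zeta=\sum_{\chi|_H=1}\chi$ one has $\zeta(x)=|G:H|$ if $x\in H$ and $\zeta(x)=0$ otherwise. So $\zeta(g)=0$ precisely because $g\notin H$, and $\operatorname{res}^G_H\zeta=|G:H|\neq 0$. The right choice is $\zeta=1-\chi$ for a character $\chi$ of $G/H$ with $\chi(g)\neq 1$, which exists since $g$ has nontrivial image in $G/H$; alternatively use $|G:H|-\sum_{\chi|_H=1}\chi$.

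\textbf{The action does not factor through $R(H)$.} More seriously, the $R(G)$-action on $G^G_*(S_H)$ does not factor through $\operatorname{res}^G_H$, so even the correct $\zeta$ need not act by zero. Whatever identification with $H$-equivariant sheaves on $Q=S_H/(G/H)$ you use, a character $\chi$ trivial on $H$ acts by tensoring with the line bundle $L_\chi$ on $Q$ obtained by descending $\mathcal{O}_{S_H}\otimes\chi$, and $L_\chi$ is usually nontrivial. For example, let $G=\mathbb{Z}/2$ act on an elliptic curve $E$ by translation by a $2$-torsion point. Then $X^g=\emptyset$, $H=1$, and $G^G_0(E)\cong G_0(E/G)$. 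Here $1-\chi$ sends $[\mathcal{O}]$ to $[\mathcal{O}]-[L]$, where $L$ is the nontrivial $2$-torsion line bundle defining the cover. This class has determinant $L^{-1}\neq\mathcal{O}$, so it is nonzero. Segal's support argument works topologically because the slice theorem makes $S_H$ locally of the form $G/H\times V$. Algebraically the torsor $S_H\to Q$ is only \'etale-locally trivial, so you cannot reduce to that case Zariski-locally.

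\textbf{The fix: shrink instead of descend.} Let $p:S_H\to Q$ be the quotient map and choose a dense open $Q'\subset Q$ on which $L_\chi$ is trivial. Over the $G$-stable open $V=p^{-1}(Q')$ this yields a unit $t\in\mathcal{O}(V)^\times$ that is semi-invariant of weight $\chi$; one can also construct such $V$ and $t$ directly on a $G$-stable affine open. Multiplication by $t$ is a natural isomorphism between the exact functor $\mathcal{F}\mapsto\mathcal{F}\otimes\chi$ and the identity of $Coh^G(V)$. Hence $\chi$ acts as the identity on every $G^G_i(V)$, and $1-\chi$ annihilates $G^G_*(V)$. Since $(1-\chi)(g)\neq 0$, it is a unit in $R(G)_g$, so $G^G_*(V)_g=0$. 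The complement $S_H\setminus V$ is a smaller closed $G$-stable subset with the same stabilizer type. Noetherian induction with the long exact localization sequences then gives $G^G_i(S_H)_g=0$ for all $i$, including $i=1$, which you need for injectivity. A filtration-by-support argument showing that $1-\chi$ acts nilpotently would also handle $G_0$, but is less direct for $G_1$. With this replacement your argument goes through.
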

Let the inverse of $(i^{alg})_*$ be
\[
L_g:=(i^{alg})_*^{-1}:G_0^G(X)_g\xrightarrow{\cong}G_0^G(X^g)_g
\]
We now show the compatibility of comparison maps with localization and Chern characters.
\begin{theorem}\label{compatibilityofloclaitions}
   Let $X$ be a complex projective algebraic variety with an algebraic action by a finite cyclic group $G=\langle g\rangle$.  The following diagram is commutative.
\begin{equation}\label{diag2}
\begin{tikzcd}[column sep=large, row sep=large]
G^{G}_0(X)
    \arrow[r, "\alpha^G"]
    \arrow[d]
&
K^{G,\mathrm{top}}_0(X)
    \arrow[r, "\lambda^G_c"]
    \arrow[d]
&
K^{G,\mathrm{an}}_0(X)
    \arrow[d]
\\
G^{G}_0(X)_g
    \arrow[r, "\alpha_g^G"]
    \arrow[d, "\overset{L_g}{\cong}"]
&
K^{G,\mathrm{top}}_0(X)_g
    \arrow[r, "(\lambda_c^G)_g"]
    \arrow[d, "\overset{\delta_g}{\cong}"]
&
K^{G,\mathrm{an}}_0(X)_g
    \arrow[d, "\overset{\gamma_g}{\cong}"]
\\
G^{G}_0(X^g)_g
    \arrow[r, "\alpha_g^G"]
      \arrow[d, "\cong"]
&
K^{G,\mathrm{top}}_0(X^g)_g
    \arrow[r, "(\lambda_c^G)_g"]
     \arrow[d, "\cong"]
&
K^{G,\mathrm{an}}_0(X^g)_g
 \arrow[d, "\cong"]
\\
G_0(X^g) \otimes R(G)_g
    \arrow[r, "\alpha \otimes 1"]
    \arrow[ddr, "\td_*\otimes\chi_g"']
&
K^{top}_0(X^g) \otimes R(G)_g
    \arrow[r, "\lambda_c \otimes 1"]
     \arrow[dd, "ch^{top}\otimes\chi_g"]
&
K^{an}_0(X^g) \otimes R(G)_g
  \arrow[ddl, "Ch\otimes\chi_g"]
  \\\\
&
H_*(X^g,\mathbb{C})
\end{tikzcd}
\end{equation}
\end{theorem}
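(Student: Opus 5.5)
The diagram splits into three layers of squares, and I will check each layer separately.

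\emph{Top layer.} The squares relating each $K$-group to its localization commute because $\alpha^G$ and $\lambda^G_c$ are $R(G)$-module homomorphisms. For $\alpha^G$ this follows from its construction via $K^{G,alg}_X(M)\to K^{G,top}_X(M)$ and equivariant Alexander duality, which are compatible with tensoring by representations. For $\lambda^G_c=\mu^G\circ(\phi^G)^{-1}$ it follows because both $\mu^G$ and $\phi^G$ send $[(M,E,f)]\otimes\rho$ to the class of $(M,E\otimes\rho,f)$. Localization is a functor on $R(G)$-modules, so these maps induce $\alpha^G_g$ and $(\lambda^G_c)_g$ and the top squares commute.

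\emph{Middle layer.} Since $L_g,\delta_g,\gamma_g$ are the inverses of $(i^{alg})_*,(i^{top})_*,(i^{an})_*$, it suffices to prove that $\alpha^G$ and $\lambda^G_c$ commute with proper pushforward along the $G$-equivariant closed embedding $i:X^g\hookrightarrow X$:
\[
\alpha^G\circ i^{alg}_*=i^{top}_*\circ\alpha^G,\qquad \lambda^G_c\circ i^{top}_*=i^{an}_*\circ\lambda^G_c .
\]
Inverting the vertical isomorphisms after localizing then gives the middle squares. The first identity is the equivariant version of the BFM covariance of $\alpha$ for proper maps, which holds by naturality of the construction in \cite{T2}. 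For the second, note that $\mu^G$ and $\phi^G$ are both defined on cycles by $f_*$. Hence for a cycle $(M,E,f)$ on $X^g$, both sides agree on the cycle $(M,E,i\circ f)$.

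The one point needing care is that the map $(i^{an})_*$ of Theorem \ref{inverseoflocalization} is written as a Kasparov product with $[i^*]\in KK^G_0(C(X),C(X^g))$. This is exactly the functorial pushforward $i_*$ in $KK^G(C(-),\mathbb{C})$. I regard checking that this matches the $f_*$ appearing in $\mu^G$ (naturality of the \cite{BOSW} isomorphism) as the main potential obstacle, though it should reduce to associativity of the Kasparov product, $[i^*]\otimes f_*[\bD]=(i\circ f)_*[\bD]$.

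\emph{Bottom layer.} $G$ acts trivially on $X^g$, so each equivariant group of $X^g$ splits as the non-equivariant group tensored with $R(G)$. This is the Künneth-type decomposition for trivial actions (isotypic decomposition of sheaves, bundles and Hilbert modules). Each comparison map respects the decomposition, since each is natural and $R(G)$-linear and on trivial-action spaces is the non-equivariant map tensored with the identity. This gives the squares with $\alpha\otimes1$ and $\lambda_c\otimes1$.

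Finally, the triangles into $H_*(X^g,\mathbb{C})$ are obtained by tensoring the non-equivariant compatibility \eqref{compatiblitcherntodd2} with $\chi_g$:
\[
\td_*=ch^{top}\circ\alpha,\qquad Ch\circ\lambda_c=ch^{geo}\circ\phi^{-1}=ch^{top},
\]
where the second uses $\lambda_c=\mu\circ\phi^{-1}$, $Ch=ch^{geo}\circ\mu^{-1}$, and $ch^{geo}=ch^{top}\circ\phi$. This completes the plan.
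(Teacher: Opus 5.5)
Your proposal is correct and follows the same route as the paper: the middle squares come from compatibility of $\alpha^G$ and $\lambda^G_c$ with proper pushforward $i_*$ along $X^g\hookrightarrow X$ (since $L_g,\delta_g,\gamma_g$ are the inverses of $i_*$), and the bottom part comes from the trivial $G$-action on $X^g$ together with the non-equivariant compatibility \eqref{compatiblitcherntodd2}. You also make explicit two points the paper leaves implicit: the $R(G)$-linearity needed for the top localization squares, and the identification of $(i^{an})_*$, defined by Kasparov product with $[i^*]$, with the functorial pushforward used in $\mu^G$.
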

\begin{proof} 
    Since $L_g=(i^{alg})_*^{-1}, \gamma_{{g}}=(i^{an})_*^{-1}, \delta{{g}}=(i^{top})_*^{-1}$, the commutativity of the second row  squares follows from the functoriality  of comparison maps $\alpha^G, \lambda_c^G$ with the equivariant proper inclusion $i:X^g\rightarrow X$. The  commutativity of bottom triangles comes from   the trivial  action of $G$ on $X^g$, and  the compatibility of topological, analytic Chern characters, and Todd class transformation mentioned in (\ref{compatiblitcherntodd2}).
\end{proof}
We can assume $G$ is a general finite group from the beginning by restricting to the cyclic subgroup $G'=\langle g\rangle$. The restriction of $R(G)$ to $R(G')$ induces  restriction maps
\[res^G_{G'} : K^G\rightarrow  K^{G'}\]
The naturality of comparison maps implies:
\begin{theorem}  Let $X$ be a complex projective algebraic variety with an algebraic action by a finite  group $G$. Let $g\in G$ and $G'=\langle g\rangle$.  The following diagram is commutative.
\begin{equation}\label{diag3}
\begin{tikzcd}[column sep=large, row sep=large]
G^{G}_0(X)
    \arrow[r, "\alpha^G"]
    \arrow[d, "res^G_{G'}"]
&
K^{G,\mathrm{top}}_0(X)
    \arrow[r, "\lambda^G_c"]
    \arrow[d, "res^G_{G'}"]
&
K^{G,\mathrm{an}}_0(X)
    \arrow[d, "res^G_{G'}"]
\\
G^{G'}_0(X)
    \arrow[r, "\alpha^{G'}"]
&
K^{G',\mathrm{top}}_0(X)
    \arrow[r, "\lambda_c^{G'}"]
&
K^{G',\mathrm{an}}_0(X)
\end{tikzcd}
\end{equation}
\end{theorem}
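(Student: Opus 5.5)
The plan is to check commutativity of each square of (\ref{diag3}) separately. In every case, restriction from $G$ to $G'$ is induced on cycles by forgetting part of the group action, and each comparison map is built out of constructions that do not depend on which group acts.

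\emph{Left square ($\alpha^G$ versus $\alpha^{G'}$).} Choose a $G$-equivariant closed embedding $j:X\hookrightarrow M$ into a smooth $G$-variety, as in the construction following \cite[\S~5]{T2}. The same embedding is also $G'$-equivariant, so the same $M$ can be used to define $\alpha^{G'}$. I would then check that each of the three steps commutes with $res^G_{G'}$.
\begin{enumerate}
\item The isomorphism $K^{G,alg}_X(M)\cong G^G_0(X)$ is induced by taking homology sheaves of complexes. Forgetting the $G$-linearization to a $G'$-linearization commutes with this operation.
\item The algebraic-to-topological map on complexes of equivariant vector bundles is the identity on underlying bundles and forgets only the algebraic structure. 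It therefore commutes with forgetting part of the group action.
\item For equivariant Alexander duality, the point is that the duality class, namely the Bott/Thom class of the $G$-equivariant complex structure on $M$, restricts to the $G'$-duality class.
\end{enumerate}
One also has to know that $\alpha^{G'}$ does not depend on the choice of embedding. This is part of the naturality asserted in \cite{T2}.

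\emph{Right square ($\lambda^G_c$ versus $\lambda^{G'}_c$).} Since $\lambda^G_c=\mu^G\circ(\phi^G)^{-1}$ and $\phi^G$ is an isomorphism, it suffices to prove two identities: $res^G_{G'}\circ\phi^G=\phi^{G'}\circ res^G_{G'}$ and $res^G_{G'}\circ\mu^G=\mu^{G'}\circ res^G_{G'}$. Here $res^G_{G'}$ on $K_0^{G,geo}(X)$ sends a triple $(M,E,f)$ to the same triple regarded with $G'$-action; this is well defined on equivalence classes because bordism, direct sum and vector bundle modification relations restrict.
\begin{itemize}
\item For $\phi^G$: the class $f_*([E]\cap[M]_K)$ restricts term by term. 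The equivariant $K$-fundamental class of a $G$-$spin^c$ manifold restricts to the $G'$-fundamental class, and cap product and $f_*$ commute with restriction.
\item For $\mu^G$: $f_*[\bD^E_M]$ is given by the same Hilbert space and operator, with the unitary representation restricted to $G'$. Restriction in $KK^G$ is a functor compatible with Kasparov products, so it commutes with pushforward and with twisting by $[E]$.
\end{itemize}

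The step I expect to be the main obstacle is the left square: one has to verify that the equivariant Thom/Alexander-duality isomorphism is compatible with restriction of groups. I would handle it by noting that all the maps involved are defined by cap/cup products with an equivariant Thom class, and that the $G$-equivariant Thom class of a complex $G$-bundle maps to the $G'$-equivariant one under $K_G\to K_{G'}$.
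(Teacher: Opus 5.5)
Your proposal is correct and takes essentially the same approach as the paper. The paper justifies the commutativity in one line by the naturality of the comparison maps $\alpha^G$, $\phi^G$, $\mu^G$ under restriction of the group action; your square-by-square check spells that naturality out, and the one implicit input worth stating is that $\phi^{G'}$ is also an isomorphism, so that $res^G_{G'}\circ(\phi^G)^{-1}=(\phi^{G'})^{-1}\circ res^G_{G'}$ follows from $res^G_{G'}\circ\phi^G=\phi^{G'}\circ res^G_{G'}$.
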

\begin{definition}
      For $M\in K_0^{G,an}(X)$ and $g\in G, G'=\langle g\rangle$, the localized class $M_g$ is the image under 
       \[K_0^{G,an}(X)\xrightarrow{res^G_{G'}} K_0^{G',an}(X)\rightarrow K_0^{G',an}(X)_{{g}}\]
       Similarly, we use the same notation for  $G_0^{G}$.
\end{definition}
\subsection{Localized characteristic classes}
Let $G$ be a finite group, and $g\in G,\:G'=\langle g\rangle$. 
\begin{definition}\label{localizedchernclass} Let $X$ be a compact $G$-CW-complex. Let $\mathcal{F}\in K_0^{G,an}(X)$. Define $$Ch(\mathcal{F},g)\in H_*(X^g,\mathbb{C})$$ to be the image of $\mathcal{F}$ under the maps 
    \[K_0^{G,an}(X)\xrightarrow{res^G_{G'}} K_0^{G',an}(X)\rightarrow K_0^{G',an}(X)_{{g}}\xrightarrow[]{\gamma_{{g}}}K_0^{G',an}(X^g)_{g}\simeq K_0^{an}(X^g) \otimes R(G')_{g}\xrightarrow{Ch_g}H_*(X^g,\mathbb{C})\]
\end{definition}
\begin{definition}\label{alglocaliationcharacclass} Let $X$ be a complex projective algebraic variety with an algebraic $G$-action.   Let $\mathcal{F}\in G^G_0(X)$. Define $$\td_{*}(\mathcal{F},g)\in H_*(X^g,\mathbb{C})$$ to be the image of $\mathcal{F}$ under the maps
    \[G_0^{G}(X)\xrightarrow{res^G_{G'}} G_0^{G'}(X)\rightarrow G_0^{G'}(X)_{{g}}\xrightarrow[]{L_{{g}}}G_0^{G'}(X^g)_{g}\simeq G_0(X^g) \otimes R(G')_{g}\xrightarrow{\td_{*}\otimes \chi_g}  H_*(X^g,\mathbb{C})\]
 We set
\[\td_{2*}(-,g):=\Psi_2\td_{*}(-,g)\]
\end{definition}
\begin{remark}
    As explained in \cite[Remark 2.4]{CMSS}, $\td_{2*}(\mathcal{F},g)$ only depends on the subgroup  $G'=\langle g\rangle$. Also, $Ch([D_X^{sign,G}],g)$, whose degree, as we will see, equals  the equivariant $g$-signature,  only depends on $G'$. See \cite[Remark 6.18]{BLP}, \cite[p.~539]{AS2}. That is why we restrict from $G$ to $G'$.
\end{remark}
\subsection{$G$-signature and  analytic equivariant $L$-classes}
Let $X$ be a $n$-dimensional compact smoothly stratified Witt space, where smoothly stratified
means stratified in the sense of Thom-Mather \cite[\S2]{ABP1}. Moscovici-Wu \cite{MW} based on work of Cheeger \cite{Ch}, and later P. Albin, E. Leichtnam, R. Mazzeo, and P. Piazza \cite{ALMP} constructed a $K$-homology class $$\left[D_X^{sign}\right]\in K_n^{an}(X)$$ Here $D_X^{sign}$ is the signature operator on the regular part of $X$, endowed with a metric quasi-isometric to a piecewise flat conic metric, such as wedge (iterated-edge) metric considered in \cite{ALMP}, and the resulting class is independent of the choice of metric. As mentioned in (\ref{ch=mathcalL}), its Chern character gives the renormalized Goresky-MacPherson $L$-class $L^{an}_*(X)$.

Now let $X$ be a compact oriented even-dimensional Thom-Mather G-Witt space where $G$ preserves the orientation, in the sense of \cite[\S2]{BLP}. In particular $K_n^{an}(X)\simeq K_0^{an}$.
\begin{theorem}
    \cite[Thm. 4.1]{BLP} There is an equivariant signature $K$-homology class $\left[D_X^{sign,G}\right]\in K^{G,an}_0(X)$ by choosing a $G$-invariant wedge metric. The class does not depend on the metric.
\end{theorem}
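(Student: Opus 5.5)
The statement is \cite[Thm.~4.1]{BLP}, which is quoted rather than proved here. What follows is the plan for how its proof goes: build the equivariant signature class analytically, then show it does not depend on the metric.

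\emph{Existence.} Since $G$ is finite, we start from any wedge (iterated edge) metric $h$ on the regular part of $X$ and average it, setting $\bar h=\frac{1}{|G|}\sum_{g\in G} g^*h$. We must check that $\bar h$ is again a wedge metric. This holds because $G$ acts by stratified diffeomorphisms that preserve the Thom--Mather control data, and wedge metrics form a convex cone invariant under such maps.

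With $\bar h$ fixed, the Hodge star and the signature involution commute with the $G$-action, because $G$ preserves the orientation. Hence $D^{sign}_X=d+d^*$ on the regular part is $G$-equivariant. The analysis of \cite{ALMP} applies as follows:
\begin{itemize}
\item the Witt condition gives a unique closed extension of $D^{sign}_X$ (essential self-adjointness on the wedge $L^2$ space);
\item this extension has compact resolvent;
\item multiplication by $C(X)$ has bounded commutators with it.
\end{itemize}
Together these yield an unbounded Kasparov module $(L^2\Omega,\rho,D^{sign}_X)$. The only extra ingredient over the non-equivariant case is that $L^2\Omega$ carries a unitary $G$-representation that commutes with $D^{sign}_X$, grading and $\rho$ covariantly. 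So the module defines a class in $KK^G_0(C(X),\mathbb{C})$.

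\emph{Metric independence.} Take two $G$-invariant wedge metrics $h_0,h_1$. The linear path $h_t=(1-t)h_0+th_1$ stays inside the $G$-invariant wedge metrics. Following the argument of \cite{ALMP} and \cite{MW}, we would show that the family $D_t$ gives an operator homotopy, after bounded transform, in the equivariant Kasparov sense.

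The domain of $D_t$ changes with $t$, so we identify the Hilbert spaces $L^2(h_t)$ by the $G$-equivariant unitary built from the square root of the comparison of volume forms and Hodge stars. Because any two wedge metrics are quasi-isometric, the identified domains coincide, and $t\mapsto D_t(1+D_t^2)^{-1/2}$ is norm continuous modulo compacts. This continuity is the main obstacle in the argument.

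All identifications above commute with $G$, since every object involved is $G$-invariant. Therefore the homotopy is equivariant and the class is independent of the metric.
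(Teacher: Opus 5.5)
The paper gives no argument for this statement: it is imported from \cite[Thm.~4.1]{BLP}. Your outline therefore has to stand on its own, and as written it has two genuine gaps.

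\textbf{First gap: the metrics may not be admissible.} The input you take from \cite{ALMP} (a unique closed extension with compact resolvent) does not follow from the Witt condition for an arbitrary wedge metric. It holds for \emph{adapted} (suitably scaled) wedge metrics, whose link operators have no spectrum in the critical interval. The Witt condition only removes middle-degree harmonic forms on the links. Small nonzero link eigenvalues still produce indicial roots in the critical range, including on even-codimensional strata, where the Witt condition says nothing. Already on a $2$-dimensional cone of angle $>2\pi$ the operator $d+\delta$ is not essentially self-adjoint.

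Adaptedness is not a convex condition, and averaging can destroy it. Averaging the $G$-translates of a circle-link metric $a(\theta)^2d\theta^2$ replaces $a$ by the quadratic mean of its translates. This can strictly increase the cone angle, and the same happens along $h_t=(1-t)h_0+th_1$. So neither $\bar h$ nor your path is guaranteed to be admissible. This is repairable: rescale the link directions $G$-invariantly using the $G$-invariant control data, uniformly along the compact path, since shrinking links only enlarges link eigenvalues. But the rescaling has to be part of the argument.

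\textbf{Second gap, more serious: the domains do not coincide.} The claim that quasi-isometry makes the identified domains coincide is false, so the common-domain continuity argument never starts. Write $\langle u,v\rangle_{h_t}=\langle u,A_tv\rangle_{h_0}$ and $U_t=A_t^{1/2}$.

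Already for trivial $G$, take a cone point with wedge metrics $h_0=d\rho^2+\rho^2a(\theta)^2d\theta^2$ and $h_1=d\rho^2+\rho^2b(\theta)^2d\theta^2$. Choose both of small angle, so both operators are essentially self-adjoint, and choose $a/b$ non-constant.
\begin{itemize}
\item A cut-off $\chi\equiv 1$ near the tip lies in $\operatorname{dom}D_0$. But $U_1^{-1}\chi=(a/b)^{1/2}\chi$ is not in $\operatorname{dom}D_1$: its differential has pointwise size $\sim\rho^{-1}$, which is not in $L^2(\rho\,d\rho\,d\theta)$.
\item Even with no identification at all, $\chi\,\mathrm{dvol}_{h_0}\in\operatorname{dom}D_0\setminus\operatorname{dom}D_1$.
\end{itemize}
The cause is that $\delta_{h_t}=A_t^{-1}\delta_{h_0}A_t$, and conjugation by $U_t$, involve derivatives of the metric. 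In link directions these are of size $\rho^{-1}$.

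The rest of your identification is fine. Your $U_t$ commutes with $C(X)$ and $G$, and it even intertwines the signature involutions; this follows from $\tau_t=\tau_0A_t$ and $\tau_t^2=1$. Only the domains fail.

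What quasi-isometry does preserve is the domain of $d$. For every wedge metric on a Witt space $d_{\min}=d_{\max}$. For adapted metrics, the closure of $D^{sign}_h$ is the signature operator $d+d^{*_h}$ of this fixed $L^2$ de Rham complex, with only the inner product varying. Metric independence can be obtained from that in either of two ways:
\begin{itemize}
\item a Hilsum-type argument, as for Lipschitz Riemannian metrics;
\item a version of the ALMP parametrix with parameters over $[0,1]$, producing a $G$-equivariant cycle for $KK^G_0(C(X),C([0,1]))$.
\end{itemize}
Once either is in place, equivariance is indeed automatic, as you say.
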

For $a:X\rightarrow pt$, we have $a_*:K_0^{G,an}(X)\rightarrow K_0^{G,an}(pt)=R(G)$; then
\[[Ker((D^G)^+)]-[Ker((D^G)^-)]=:ind_G([D^G])=a_*([D^G])\]
Let  $\dim_{\mathbb{R}}(X)=2m$, and for the sake of argument assume that $m$ is even. Then 
\[Sign_G(X):=IH_+-IH_-\in RO(G)\subset R(G)\]
\[Sign(g,X):=tr(g_*|IH^m_+)-tr(g_*|IH^m_-)=\chi_g(Sign_G(X))\] 
\begin{theorem}
    \cite[Prop. 5.3]{BLP} $\forall g\in G$, the equivariant signature $Sign(g,X)$ is the equivariant index of $\left[D_X^{sign,G}\right]$; 
    \[Sign(g,X)=a_*(\left[D_X^{sign,G}\right])(g)=ind_G(\left[D_X^{sign,G}\right])(g)\]
\end{theorem}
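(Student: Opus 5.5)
This is a citation of \cite[Prop. 5.3]{BLP}. I would reduce it to two facts: the analytic pushforward to a point computes the equivariant Fredholm index, and the equivariant index of the signature operator is the virtual representation on $L^2$-harmonic forms, which is intersection cohomology.

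\emph{Step 1: the point pushforward is the index.} Take $a\colon X\to pt$. The class $[D_X^{sign,G}]\in KK^G_0(C(X),\mathbb{C})$ is the unbounded Kasparov module $(L^2\Omega^*(X_{reg}),\rho,D)$. Here $\rho$ is multiplication by functions, $D$ is the closure of $d+d^*$ for a $G$-invariant wedge metric, and the grading is the signature involution $\tau$. The pushforward $a_*$ is induced by the unital inclusion $\mathbb{C}\to C(X)$, so it forgets the $C(X)$-module structure. What remains is the $G$-equivariant Fredholm module $(L^2\Omega^*,D)$ over $\mathbb{C}$. Under $KK^G_0(\mathbb{C},\mathbb{C})\cong R(G)$ it goes to $[\ker D^+]-[\ker D^-]$. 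Fredholmness and discreteness of the spectrum come from \cite[Thm. 4.1]{BLP}: $D$ is essentially self-adjoint with compact resolvent on a Witt space with a wedge metric. Since $G$ acts by isometries commuting with $D$ and $\tau$, these kernels are finite-dimensional $G$-representations. This gives the second equality.

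\emph{Step 2: identify the kernels with $IH$.} By Cheeger's Hodge theorem, in the form used in \cite{ALMP}, the $L^2$-harmonic forms on $X_{reg}$ are naturally isomorphic to $IH^*(X)$ for the middle perversity. This is where the Witt condition enters. The isomorphism is natural for isometries, so it is $G$-equivariant, and it intertwines $\tau$ with the Hodge star. Write $\ker D=\bigoplus_k\mathcal{H}^k$. For $k\neq m$, $\tau$ swaps $\mathcal{H}^k\oplus\mathcal{H}^{2m-k}$. This pairs each piece into a $\pm1$ eigenspace decomposition of the form $V\oplus V$, whose contributions cancel in $R(G)$, with the $g$-action commuting with the swap. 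In degree $m$, $\tau$ restricts to $\pm\star$, with the sign fixed by the convention $\tau=i^{p(p-1)+m}\star$, which for $m$ even is $\star$. So $\mathcal{H}^m_\pm\cong IH^m_\pm$, the $\pm1$ eigenspaces of the intersection form relative to a $G$-invariant inner product. Hence $ind_G=IH_+-IH_-=Sign_G(X)$, and applying $\chi_g$ gives $Sign(g,X)$.

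\emph{Main obstacle.} The cancellation and sign conventions are routine. The delicate point is analytic: one must ensure that the domain realized in \cite{BLP} is the unique closed extension, which on a Witt space is the one matching middle-perversity $L^2$-cohomology, and that the Cheeger isomorphism is equivariant. I would take the first from \cite{ALMP} and \cite[Thm. 4.1]{BLP}. The second follows from naturality of the Hodge decomposition under isometries.
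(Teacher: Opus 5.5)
Your proposal is correct and follows the standard route behind the cited result; the paper itself only quotes \cite[Prop. 5.3]{BLP} and gives no proof. You identify $a_*$ with the equivariant Fredholm index of the $G$-invariant wedge signature operator, then use Cheeger's $G$-equivariant $L^2$-Hodge isomorphism with $IH^*(X)$: the off-middle degrees cancel in pairs, and $\tau=\star$ on middle forms, so the index reduces to $[\mathcal{H}^m_+]-[\mathcal{H}^m_-]$. The one input you use tacitly is that Cheeger's isomorphism carries the pairing $(\alpha,\beta)\mapsto\int\alpha\wedge\beta=\langle\alpha,\star\beta\rangle$ on harmonic $m$-forms to the intersection pairing on $IH^m$; this is what makes $\mathcal{H}^m_\pm$ a $G$-invariant positive/negative splitting computing $Sign_G(X)$.
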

Note that the degree of analytic Chern character of the associated class of a Dirac-type operator equals the index of that operator.
\begin{theorem}  \label{homologicalclass}  \cite[Lemma 6.11]{BLP}
Let $X$ be a compact oriented even-dimensional Thom-Mather G-Witt space where $G$ preserves the orientation. Then
\[Sign(g,X)=deg~Ch(\left[D_X^{sign,G}\right],g)\]
where $Ch(-,g)$ is defined in Definition \ref{localizedchernclass}.
\end{theorem}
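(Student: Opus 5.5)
The plan is to start from the equivariant index and push it through the localization isomorphism, which is compatible with pushforward to a point. By \cite[Prop. 5.3]{BLP}, $Sign(g,X)=\chi_g\big(a_*[D_X^{sign,G}]\big)$. Since $\chi_g$ only sees the restriction to $G'=\langle g\rangle$ and $res^G_{G'}$ commutes with $a_*$, we may replace $G$ by $G'$. We then view $a_*[D_X^{sign,G'}]$ inside $R(G')_g$; because $\chi_g$ factors through the localization $R(G')\to R(G')_g$, nothing is lost by doing this. Write $\mathcal F=[D_X^{sign,G'}]_g\in K_0^{G',an}(X)_g$.

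Next we use functoriality of pushforward under $a\circ i=a^g:X^g\to pt$, where $a^g$ is the map to a point. This gives
\[
a_*\mathcal F=a_*(i_*\gamma_g\mathcal F)=a^g_*(\gamma_g\mathcal F).
\]
The equality holds because $\gamma_g=(i^{an}_*)^{-1}$ by Theorem \ref{inverseoflocalization}, and pushforwards in Kasparov $K$-homology compose and commute with localization, being $R(G')$-linear. Since $G'$ acts trivially on $X^g$, we have $K_0^{G',an}(X^g)_g\cong K_0^{an}(X^g)\otimes R(G')_g$. Under this identification, $a^g_*$ becomes $a^g_*\otimes 1$ with values in $K_0^{an}(pt)\otimes R(G')_g=R(G')_g$. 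Writing $\gamma_g\mathcal F=\sum_j x_j\otimes r_j$, we get
\[
\chi_g(a_*\mathcal F)=\sum_j \operatorname{ind}(x_j)\,\chi_g(r_j).
\]

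Finally we apply the non-equivariant fact, noted just before the statement, that for $x\in K_0^{an}(X^g)$ one has $\operatorname{ind}(x)=a^g_*x=\deg Ch(x)$. This follows from naturality of $Ch$ under $a^g_*$ together with $Ch=\mathrm{id}$ on $K_0^{an}(pt)=\mathbb Z\subset H_0(pt)$, or from the commutative diagram (\ref{compatiblitcherntodd2}) and the definition $Ch=ch^{geo}\circ\mu^{-1}$, where $\deg ch^{geo}(M,E,f)$ is the Atiyah–Singer index. Summing over $j$ yields
\[
\sum_j \deg Ch(x_j)\,\chi_g(r_j)=\deg\big((Ch\otimes\chi_g)(\gamma_g\mathcal F)\big)=\deg Ch([D_X^{sign,G}],g),
\]
which is exactly Definition \ref{localizedchernclass}. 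If $X^g$ is disconnected, we sum over its components.

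The step needing the most care is the identity $a_*\circ i_*=a^g_*$ on localized groups. Here we must make sure that the isomorphism of Theorem \ref{inverseoflocalization}, defined via the Kasparov product with $[i^*]$, really agrees with the covariant pushforward, so that functoriality under composition applies. We also need that $\chi_g$ is well defined on $R(G')_g$, which it is because the denominators are representations whose characters do not vanish at $g$.
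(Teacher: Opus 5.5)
Your argument is correct and is essentially the paper's. The paper gives no separate proof; it cites \cite[Lemma 6.11]{BLP}, which rests on the same two ingredients you use: \cite[Prop. 5.3]{BLP} (the $g$-signature is $\chi_g$ of the equivariant index) and the fact that the degree of the analytic Chern character equals the index. Your write-up just makes explicit that $\gamma_g$ is compatible with pushforward to a point, which is the same kind of diagram the paper uses in the proof of Theorem \ref{LRRan}.
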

\begin{definition}\label{localizedLclass}
    For a finite group $G$, and $X$ a compact oriented even-dimensional Thom-Mather G-Witt space where $G$ preserves the orientation, the analytic localized $L$-class is defined by
    \[L^{an}_*(X,g):=Ch(\left[D_X^{sign,G}\right],g) \in H_*(X^g,\mathbb{C})\]
\end{definition}
\section{Relation between analytic and topological equivariant $L$-classes}\label{signvariousL}
In this section we clarify the relation between the localized class $L^{an}_*(X,g)$ and the equivariant class $L_*(X,g)$ of \cite{B26}.

Let $G$ be a finite group, and let $X$ be a compact oriented even-dimensional triangulated Witt pseudomanifold on which $G$ acts by orientation-preserving simplicial maps. For $g\in G$, the equivariant $L$-class $L_*(X,g)$ is defined by M. Banagl \cite[Def. 9.2]{B26} as follows:
\[
L_*(X,g):=\frac{1}{\deg\: \pi}\pi_!S(g)\in H_*(X,\mathbb{C})^G
\]
where $\pi:X\rightarrow X/G$ is the orbit projection, $\pi_!$ is the  \textit{transfer map} of $\pi$   $$\pi_!:H_*(X/G,\mathbb{C})\rightarrow H_*(X,\mathbb{C})$$
$S(g)\in H_*(X/G,\mathbb{C})$ comes from an equivariant version of the definition of $L$-classes; namely
\[S(g)=S_p(g):\pi^i(X)^G\otimes \mathbb{C}\rightarrow \mathbb{C}\]
\[[f]\otimes \lambda\mapsto \lambda Sign(g,f^{-1}(p))\]
for a sufficiently general $p\in S^i$ and $f:X\rightarrow S^i$. Since the equivariant signature of a closed
oriented Witt pseudomanifold $X$ is a simplicial Witt $G$-bordism invariant \cite{BLP,B26}, the definition is independent of the $G$-homotopy class of $f$ and the choice of $p$.
\begin{remark}
    As explained in \cite{B26}, $X$ can be smoothly stratified, in the sense of \cite{ABP1} by taking the open simplices as strata and $G$ acts by stratified diffeomorphisms. $X$  admits control data giving it the structure of a $G$-Thom-Mather 
space, in the sense of \cite[Def. 2.1]{BLP}. Therefore, the equivariant analytic signature class of \cite{BLP} is defined.
\end{remark}
\begin{theorem}\label{Banagl_localization_comparison}
Let $G$ be a finite group and let $X$ be a compact oriented even-dimensional triangulated Witt pseudomanifold on which $G$ acts by orientation-preserving simplicial maps. Let $g\in G$, and 
$
j:X^g\rightarrow X
$
be the inclusion of the fixed point set. Then
\[
L_*(X,g)=j_*(\Psi_2(L^{an}_*(X,g)))
\]
where $L_*(X,g)$ is Banagl's equivariant class \cite{B26} and
\[
L^{an}_*(X,g):=Ch(\left[D_X^{sign,G}\right],g)=
Ch_g\circ\gamma_g
\left(
\left[D_X^{sign,G}\right]_g
\right)
\in H_*(X^g,\mathbb{C})
\]
\end{theorem}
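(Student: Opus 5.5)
The strategy is to show that both sides have the same pairing with enough cohomology classes, namely those coming from maps to spheres. Rationally, cohomology of the compact polyhedron $X$ (and of $X/G$) is spanned by pullbacks $f^*u_i$ of the fundamental classes $u_i\in H^i(S^i)$ along maps $f:X\to S^i$; this is Serre's theorem on stable cohomotopy, and after suspending we may take $i>\tfrac12\dim X$. Banagl's class $L_*(X,g)$ is by construction determined by the values $S(g)([f]\otimes\lambda)=\lambda\,Sign(g,f^{-1}(p))$.

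\textbf{Step 1: reduce to a pairing identity.} I would first reduce, as in the Remark following Definition \ref{alglocaliationcharacclass}, to the cyclic group $G'=\langle g\rangle$, and check that Banagl's definition also only sees $G'$ after the appropriate transfer normalization. I expect this to need care with $\pi_!$ versus the quotient by $G'$. Since $\deg\pi=|G|$ and $\pi_*\pi_!=|G|\cdot\mathrm{id}$, for $a=f^*u$ with $f$ factoring through $X/G$ one gets
\[
\langle \pi^*a,\,L_*(X,g)\rangle=S(g)(a)=Sign(g,f^{-1}(p)).
\]
So it suffices to prove
\[
\bigl\langle f^*u,\; j_*\Psi_2 L^{an}_*(X,g)\bigr\rangle = Sign\bigl(g,f^{-1}(p)\bigr)
\]
for $G$-invariant $f$ transverse to $p$, together with showing that $j_*\Psi_2L^{an}_*(X,g)$ is determined by these pairings.

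\textbf{Step 2: the $K$-theoretic transversality formula.} Let $Y=f^{-1}(p)$, a $G$-invariant Witt subspace with trivial (hence $G$-trivial) normal bundle $Y\times\mathbb R^i$ inside the tubular neighbourhood $U=f^{-1}(D)$. I would prove an equivariant version of the non-equivariant restriction formula of \cite{ABP2}:
\[
f^*[\beta_i]\cap [D^{sign,G}_X]\big|_U = c_i\,[D^{sign,G}_Y]\times[\text{Bott on }\mathbb R^i],
\]
in $K^{G,an}_0$ of $U$ relative to its boundary. Here $\beta_i\in \widetilde K^0(S^i)$ (for $i$ even) is the Bott class and $c_i$ is a power of $2$ coming from the signature normalization. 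The tool is the product structure of the $G$-invariant wedge metric near $Y$ together with the multiplicativity of signature operators. This is the step I expect to be the main obstacle. My first attempt would be to use \cite[Thm.~4.1]{BLP} metric independence to choose a product metric near $Y$, and then invoke Kasparov product associativity exactly as in the non-equivariant proof.

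\textbf{Step 3: localize, take Chern characters, and conclude.} Localization at $g$ and the map $\gamma_g$ are $K^*_G$-module maps, and the cap product with the $G$-trivial class $f^*\beta$ commutes with $\gamma_g$ (since $j^*f^*\beta=(f\circ j)^*\beta$). Hence
\[
\gamma_g\bigl(f^*\beta\cap[D_X^{sign,G}]_g\bigr)=(fj)^*\beta\cap\gamma_g\bigl([D^{sign,G}_X]_g\bigr).
\]
Applying $Ch_g$ and the module property $Ch(x^*\cap y)=ch(x)\cap Ch(y)$ gives
\[
\langle f^*ch(\beta),\, j_*L^{an}_*(X,g)\rangle=c_i\,\deg Ch([D_Y^{sign,G}],g).
\]
By Theorem \ref{homologicalclass}, the right-hand side equals $c_i\,Sign(g,Y)$. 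Finally, $ch(\beta)=u_i$ and $\Psi_2$ rescales degree $i$ by $2^{-i/2}$, which should exactly cancel $c_i$. I would verify this last point against the normalization (\ref{ch=mathcalL}) in the case $G$ trivial, where the identity reduces to the known $\Psi_2L^{an}_*=L_*$. This yields the desired pairing identity on invariant classes.

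For the remaining non-invariant classes, I would use that $L_*(X,g)$ is $G$-invariant and average $j_*\Psi_2L^{an}_*(X,g)$ over $G$. The conjugation identity $h_*L^{an}_*(X,g)=L^{an}_*(X,hgh^{-1})$, obtained from the naturality of $\gamma_g$, is what makes this averaging reduce to the identity already proved on invariant classes.
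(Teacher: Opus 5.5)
Your Steps 2--3 reach the pairing identity of Theorem~\ref{singspheremapformula} by a genuinely different route from the paper's. The paper applies the equivariant analytic Gysin map $\iota^!_{an}$ and commutes it with localization via Lemma~\ref{localigysincomopatibility}. That lemma needs $Y^g\hookrightarrow X^g$ to be normally nonsingular with normal bundle $N_{Y/X}|_{Y^g}$. It then converts to homology via Lemma~\ref{gysintopanalcomparison} and the Thom class of $\iota_g$.

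You instead cap with the $K$-theory Thom class $f^*\beta$. You then need only the projection formula for $\gamma_g$, the naturality $\gamma^X_g\circ\iota_*=(\iota_g)_*\circ\gamma^Y_g$, and $Ch(x\cap y)=ch(x)\cap Ch(y)$ with $ch(\beta)=u$. This never uses any normal structure of $Y^g$ in $X^g$, which is a real simplification.

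The input you must supply is $f^*\beta\cap[D^{sign,G}_X]=2^{i/2}\,\iota_*[D^{sign,G}_Y]$ in $K^{G,an}_0(X)$. Your ``$[D_Y]\times[\text{Bott}]$'' mixes a $K$-homology class with a $K$-theory class. The correct statement is equivalent to the Gysin formula (\ref{gysinequivantsignature}) that the paper takes from \cite{ABP1,BLP}.

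Also, ``after suspending'' has to mean passing to $X\times S^k$; a suspension of a Witt space is not Witt in general. Banagl defines the low-degree classes through $X\times S^k$, so you must check that the analytic side transforms compatibly. The paper does this using $[D^{sign,G}_{X\times S^k}]=[D^{sign,G}_X]\boxtimes[D^{sign}_{S^k}]$ and $Ch[D^{sign}_{S^k}]=2^{k/2}[S^k]$.

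The genuine gap is your last paragraph. Pairings against $f^*u$ with $f$ $G$-invariant see only the image of a class in $H_*(X;\mathbb{C})_G\cong H_*(X;\mathbb{C})^G$. So they determine $j_*\Psi_2L^{an}_*(X,g)$ only if that class is $G$-invariant. Your conjugation identity $h_*L^{an}_*(X,g)=L^{an}_*(X,hgh^{-1})$ gives invariance only under the centralizer $C_G(g)$. Averaging then yields $L_*(X,g)=\frac{1}{|G|}\sum_{h\in G}h_*\,j_*\Psi_2L^{an}_*(X,g)$, equivalently $\pi_*L_*(X,g)=\pi_*j_*\Psi_2L^{an}_*(X,g)$. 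It does not yield the unaveraged identity.

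For nonabelian $G$ the unaveraged identity actually fails. Let $S_3$ permute the factors of $X=(\mathbb{CP}^2)^3$ and take $g=(12)$.
\begin{itemize}
\item The fixed set is $X^g=\{(a,a,b)\}$, with normal bundle $N\cong pr_1^*T\mathbb{CP}^2$ on which $g$ acts by $-1$.
\item Up to sign, the normal factor is $\prod_l\tanh(x_l/2)=e(N)/4+\cdots$.
\item Hence the degree-$4$ part of $j_*\Psi_2L^{an}_*(X,g)$ is a nonzero multiple of $[pt\times pt\times\mathbb{CP}^2]$.
\item The element $(13)$ sends this class to the corresponding multiple of $[\mathbb{CP}^2\times pt\times pt]$, so the class is not $G$-invariant.
\end{itemize}

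For the same reason, your remark in Step~1 that Banagl's class only sees $\langle g\rangle$ is wrong: the $G$-class is the $G$-average of the $\langle g\rangle$-class. Your argument does close for abelian $G$, where $C_G(g)=G$. In general it proves the $\pi_*$-version, which is all Corollary~\ref{averageLclass} uses.

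The paper's proof passes over the same point. It applies the uniqueness of \cite[Prop.~9.3]{B26}, which is uniqueness among $G$-invariant classes, to $j_*\Psi_2L^{an}_i(X,g)$ without checking that this class is invariant.
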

\begin{proof}
We have 
\[L_*(X,g)=L_0(g,X)+L_2(g,X)+\ldots\]
where $L_{i}(g,X)\in H_i(X,\mathbb{C})^G$. Assume $\dim X\leq 2(i-1)$, then  by
\cite[Prop. 9.3]{B26},
$L_i(g,X)$ is the $G$-invariant homology class
uniquely determined by the following relation (\ref{localhomologydetermined})  for all
$G$-invariant homotopy classes of maps $
f:X\rightarrow S^i
$, where $G$ acts trivially on $S^i$.  Let
$p\in S^i$ be a sufficiently general point, as explained below, $i$ is an even integer. Let
\[
u\in H^i(S^i,\mathbb{Z})
\]
be the generator compatible with the orientation. Then
\begin{equation}\label{localhomologydetermined}
\langle f^*(u),L_i(g,X)\rangle
=
Sign(g,f^{-1}(p))
\end{equation} 
 Therefore, it suffices to prove that the analogue of
(\ref{localhomologydetermined}) holds for
$
j_*(\Psi_2(L^{an}_i(X,g))).
$
This is Theorem \ref{singspheremapformula}. The dimensional restriction $\dim X\leq 2(i-1)$ can be removed by considering $X\times S^{k}$, for $k$ a sufficiently large even number, where $G$ acts
trivially on $S^{k}$, as explained in \cite{B26,Z}.
The cross  product with the fundamental class induces
\[\varphi_X:H_{i}({X},\mathbb{C}) \rightarrow H_{i+k}({X}\times S^k,\mathbb{C})\]
\[\alpha \rightarrow \alpha \otimes [S^k]\]
for $k$ large enough, $H_{i+k}(X)=0$, hence $H_{i+k}({X}\times S^k,\mathbb{C})\cong H_i(X)\otimes H_k(S^k)\cong H_i(X)$, so $\varphi_X$ is an isomorphism. One defines $L_i(g,X)$ as $\varphi_X^{-1} (L_{i+k}(g,X\times S^k))$. On the other hand, let $j\times id: X^g\times S^k\cong (X\times S^k)^g\hookrightarrow X\times S^k$ be the inclusion of fixed point set. To compare to the analytic class  we need to show
\begin{equation}\label{needtoshowcrosssphereLclass}
    \varphi_X(j_*(\Psi^{X^g}_2(L^{an}_i(X,g))))=(j\times id)_*(\Psi^{X^g\times S^k}_2(L^{an}_{i+k}(X\times S^{k},g)))
\end{equation}
We have
\begin{equation} \label{MULTIPLICATIVEX_s^K}
    [D^{sign,G}_{X\times S^k}]=[D^{sign,G\times id}_{X\times S^k}]=[D_X^{sign,G}]\boxtimes [D_{S^k}^{sign}]
\end{equation}
this follows from the multiplicativity  of signature operator; the non-equivariant case is proved in \cite[Corollary~5.18]{ABP1}, where the proof is a corollary of functoriality for fibrations \cite[Corollary 5.16]{ABP1}, where its proof is  based  on Kucerovsky’s unbounded characterization of the Kasparov product. As mentioned in the proof of \cite[Thm.~4.3]{BLP} the latter theorem holds in the equivariant settings for finite groups.

Also
$$Ch_*([D_{S^k}^{sign}])=L_*^{an}(S^k)=2^{k/2}L_k(S^k)=2^{k/2}[S^k]$$ (note that for $i<k, 0<k,~L_i(S^k)=0$  and $L_k(S^k)=[S^k]$ as positive-degree Pontryagin classes of $S^k$ vanish).
Since localization and the Chern character are multiplicative (\ref{MULTIPLICATIVEX_s^K}), this implies
\[L_{i+k}^{an}(X\times S^k,g)=L^{an}_i(X,g) \otimes Ch_k([D_{S^k}^{sign}])= L^{an}_i(X,g) \otimes 2^{k/2}[S^k] \]
Therefore
\begin{align*}
    (j\times id)_*(\Psi^{X^g\times S^k}_2(L^{an}_{i+k}(X\times S^{k},g)))=& (j\times id)_*(\Psi^{X^g\times S^k}_2( L^{an}_*(X,g) \otimes 2^{k/2}[S^k]))\\
    =& (j\times id)_*(\Psi^{X^g}_2(L^{an}_i(X,g))\otimes [S^k])\\
    =&\left(j_*(\Psi^{X^g}_2(L^{an}_i(X,g)))\right)\otimes [S^k]
\end{align*}
This proves (\ref{needtoshowcrosssphereLclass}). 
\end{proof}
By \cite[\S9]{B26} we may  choose a $G$-invariant  representative
$f:X\rightarrow S^i$ of the given $G$-invariant homotopy class with the following properties. $G$ acts trivially on $S^i$, such
that $p\in \Delta_i^{\circ}\subset S^i$ is a regular value in the  interior of an $i$-simplex, and $Y=f^{-1}(p)$
 is a $G$-invariant  triangulated Witt pseudomanifold on which $G$ acts by orientation-preserving simplicial maps, and  the inclusion $\iota:Y=f^{-1}(p)\hookrightarrow X$ is normally nonsingular with trivial normal bundle. As M. Banagl explained to the author,   $\iota_g:Y^g=Y\cap X^g\hookrightarrow X^g$ is also normally nonsingular with trivial normal bundle. Indeed, by \cite[\S9]{B26}, $f$ has the property that 
     $f^{-1}(\Delta_i^{\circ})\simeq \Delta_i^{\circ} \times Y$. $f|_{ \Delta_i^{\circ} \times Y}$ is the projection to $\Delta_i^{\circ}$ and $g$ acts trivially on the first component. Therefore, given a neighborhood $Y\hookrightarrow U\subset X$, with $U\cong \mathbb{R}^{i}\times Y$,  $U\cap X^g$ gives a neighborhood $Y^g\hookrightarrow U\cap X^g \subset X^g$ with $U\cap X^g \cong \mathbb{R}^{i}\times Y^g$.
     
To prove Theorem \ref{singspheremapformula} we need an equivariant analytic Gysin map, which we recall now. Let $\iota:Y\hookrightarrow X$ be a $G$-equivariant normally
nonsingular inclusion between even-dimensional $G$-Witt pseudomanifolds with normal bundle $N_{Y/X}$. There is an equivariant analytic Gysin map 
\begin{equation}
    \iota_{an}^{!}:
K_0^{G,an}(X)[\frac{1}{2}]
\rightarrow
K_0^{G,an}(Y)[\frac{1}{2}]
\end{equation}
\begin{equation}\label{gysin=sphires}
    \iota_{an}^{!}=s^!\circ \phi_*\circ res_U
\end{equation}
where $res_U:K_0^{G,an}(X)[\frac{1}{2}]\rightarrow K_0^{G,an}(U)[\frac{1}{2}]$ is the restriction to the tubular neighborhood $U\subset X$, $\phi_*:K_0^{G,an}(U)[\frac{1}{2}]\rightarrow K_0^{G,an}(N_{Y/X})[\frac{1}{2}]$ is induced by the tubular-neighborhood diffeomorphism, and 
$$s^!: K_0^{G,an}(N_{Y/X})[\frac{1}{2}] \rightarrow K_0^{G,an}(Y)[\frac{1}{2}]$$ 
\[\alpha \longmapsto \left(2^{\frac{-Rank_{\mathbb{R}}~ N_{Y/X}}{2}}\Sigma(\pi)\right)^{-1}\otimes \alpha\]
is the Gysin map of the zero section $s:Y\rightarrow N_{Y/X}$, where $\pi:N_{Y/X} \rightarrow Y$ is the projection, and $\Sigma(\pi)\in KK_0^G(N_{Y/X},Y)[\frac{1}{2}]$ is a well-defined element defined in \cite[Thm. 4.3]{BLP}  represented by the vertical family of signature
operators along the fibers. We normalized with $2^{\frac{-Rank_{\mathbb{R}}N_{Y/X}}{2}}$, hence the localization with respect to $2$ , to be consistent with \cite[Def. 5.11]{ABP1}. Note that the inverse means $$\left(2^{\frac{-Rank_{\mathbb{R}}N_{Y/X}}{2}}\Sigma(\pi)\right)^{-1} \otimes \left(2^{\frac{-Rank_{\mathbb{R}}N_{Y/X}}{2}}\Sigma(\pi)\right)=1\in KK_0^G(Y,Y)[\frac{1}{2}]$$
By \cite[Thm. 6.7]{ABP1} and \cite[Thm. 4.3]{BLP} we have
\begin{equation}\label{gysinequivantsignature}
    \iota_{an}^{!}([D_X^{sign,G}])
=
2^{\frac{Rank_{\mathbb{R}}~ N_{Y/X}}{2}}[D_Y^{sign,G}]
\end{equation}
\begin{theorem}\label{singspheremapformula}
Let $X, G,g,\:f:X\rightarrow S^i$ satisfy the hypotheses mentioned above, and let
$j:X^g\hookrightarrow X$ be the inclusion. Set $Y=f^{-1}(p)$. Let
$u\in H^i(S^i,\mathbb{Z})$ be the orientation generator. Then
\begin{equation}\label{localhomologydeterminedformyclass}
\left\langle
f^*(u),j_*\left(\Psi_2\left(L^{an}_*(X,g)\right)\right)
\right\rangle
=
\operatorname{Sign}(g,Y)
\end{equation}
\end{theorem}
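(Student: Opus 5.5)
The plan is to convert the pairing on the left of (\ref{localhomologydeterminedformyclass}) into a degree computation on $Y^g$, and then to identify that degree with $\operatorname{Sign}(g,Y)$ using Theorem \ref{homologicalclass} applied to $Y$.

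First we rewrite the pairing on $X^g$. By naturality of the Kronecker pairing,
\[
\langle f^*u, j_*\Psi_2 L^{an}_*(X,g)\rangle=\langle (f\circ j)^*u,\Psi_2 L^{an}_*(X,g)\rangle .
\]
Only the degree-$i$ component of $L^{an}_*(X,g)$ contributes, and $\Psi_2$ multiplies it by $2^{-i/2}$. Set $f_g=f\circ j:X^g\to S^i$. The point $p$ is a regular value of $f_g$ in the sense above, with $f_g^{-1}(p)=Y^g$. As explained before the statement, the inclusion $\iota_g:Y^g\hookrightarrow X^g$ is normally nonsingular with trivial normal bundle $\mathbb{R}^i$, and its Thom class is $f_g^*u$. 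Since $f_g^*u$ is supported in the tubular neighbourhood $U\cap X^g\cong\mathbb{R}^i\times Y^g$, capping with it acts as the homological Gysin map $\iota_g^!:H_*(X^g)\to H_{*-i}(Y^g)$. This gives
\[
\langle f_g^*u, c\rangle=\deg\,\iota_g^!(c)\qquad\text{for } c\in H_i(X^g,\mathbb{C}).
\]

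Second, we lift this to $K$-theory. We want the commutation
\[
\iota_g^{!}\,Ch_g\,\gamma_g([D_X^{sign,G}]_g)=\text{(normalization)}\cdot Ch_g\,\gamma_g\bigl(\iota^!_{an}[D_X^{sign,G}]\bigr)_g ,
\]
where $\iota:Y\hookrightarrow X$. This splits into two parts.
\begin{enumerate}[label=(\roman*)]
\item Localization commutes with the Gysin maps: $\gamma_g^{Y}\circ(\iota^!_{an})_g=(\iota_g)^!_{an}\circ\gamma_g^X$. Both sides are built from restriction to tubular neighbourhoods and Kasparov product with $\Sigma(\pi)^{-1}$. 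The tubular neighbourhood $U\cong\mathbb{R}^i\times Y$ carries the trivial $g$-action on the $\mathbb{R}^i$ factor, and $U\cap X^g\cong \mathbb{R}^i\times Y^g$. So $\Sigma(\pi)$ for $Y$ restricts to $\Sigma(\pi)$ for $Y^g$ under $[i^*]$, and $[i^*]$ commutes with the Gysin maps by associativity of the Kasparov product.
\item The Chern character of a trivial-bundle Gysin map. Because the normal bundle is trivial and carries the trivial action, the vertical fibre class is $[D^{sign}_{\mathbb{R}^i}]$, whose Chern character is $2^{i/2}$ times the Thom class. Hence $Ch\circ(\iota_g)^!_{an}=\iota_g^!\circ Ch$ once the factor $2^{-i/2}$ in the definition of $s^!$ is taken into account.
\end{enumerate}

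Third, we apply (\ref{gysinequivantsignature}) to $\iota$:
\[
\iota^!_{an}[D_X^{sign,G}]=2^{i/2}[D_Y^{sign,G}].
\]
Combining the three steps,
\[
\deg\,\iota_g^!\bigl(2^{-i/2}L^{an}_i(X,g)\bigr)=\deg L^{an}_*(Y,g)=\deg Ch([D_Y^{sign,G}],g)=\operatorname{Sign}(g,Y),
\]
where the last equality is Theorem \ref{homologicalclass} applied to $Y$. This is legitimate because $Y$ is a $G$-invariant Witt pseudomanifold on which $G$ acts by orientation-preserving simplicial maps, hence a Thom--Mather $G$-Witt space.

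The main obstacle is step (i) together with the bookkeeping of the powers of $2$. One must check that the analytic Gysin map on $X$ and the one on $X^g$ are intertwined by the localization isomorphisms, which are defined by Kasparov product with $[i^*]$ rather than by pushforward. The first approach is to use $\gamma_g=(i_*)^{-1}$ and prove instead $i^Y_*\circ(\iota_g)^!_{an}=(\iota^!_{an})\circ i^X_*$. This holds because the square formed by $Y^g\subset Y$ and $X^g\subset X$ is a transverse (in fact product) square near $Y$, so pushforward and Gysin maps commute there by a base-change argument for $\Sigma(\pi)$. The argument is the same as Rosenberg's \cite{R} computation in the smooth case.
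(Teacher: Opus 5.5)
Your proposal is correct and follows essentially the same route as the paper. Your steps (i) and (ii) and your use of (\ref{gysinequivantsignature}) are exactly Lemma~\ref{localigysincomopatibility} (proved, as you suggest, by showing that $(i_{X^g})_*$ and $(i_{Y^g})_*$ intertwine the Gysin maps via base change for the zero section), Lemma~\ref{gysintopanalcomparison} and the Gysin formula for the signature class; like the paper, you combine these with the Thom-class description of $\langle (f|_{X^g})^*u,-\rangle$ and Theorem~\ref{homologicalclass} for $Y$, and you track the powers of $2$ by reading off the degree-$i$ component directly instead of via (\ref{homogysinpis2}) (note that Kasparov product with $[i^*]$ \emph{is} the pushforward $i_*$, and $\gamma_g$ is its inverse, which is how you end up using it anyway).
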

\begin{proof}
Let $Y^g=Y\cap X^g=(f|_{X^g})^{-1}(p)$ and $\iota_g:Y^g\hookrightarrow X^g$. As explained above, $\iota_g$ is a normally nonsingular inclusion with trivial normal bundle. By assumptions $\dim X$ and $i=\operatorname{codim}_X Y$ are even, and $[D_X^{sign}]\in K^{an}_{d_X}(X)\cong K^{an}_0(X)$. By Theorem \ref{homologicalclass},
\[
Sign(g,Y)
=
\deg(L^{an}_*(Y,g))
=
\deg\: Ch_g\circ\gamma_g^Y([D_Y^{sign,G}]_g)
\]
Therefore, it suffices to prove
\[
\deg\: Ch_g\circ\gamma_g^Y([D_Y^{sign,G}]_g)
=
\langle f^*(u),j_*(\Psi_2(Ch_g\circ\gamma_g^X([D_X^{sign,G}]_g)))\rangle
\]
Since $\dim Y=\dim X-i$, by (\ref{gysinequivantsignature}) we get
\begin{equation}
        \iota_{an}^{!}([D_X^{sign,G}])
=
2^{i/2}[D_Y^{sign,G}]
\end{equation}
By Lemma  \ref{localigysincomopatibility}, we have the localization base-change identity for $\xi\in K_0^{G,an}(X)[\frac{1}{2}]$
\begin{equation}
    \gamma_g^Y((\iota_{an}^{!}\xi)_g)
=
(\iota_g)_{an}^{!}(\gamma_g^X(\xi_g))
\end{equation}
and also
\begin{equation}\label{homogysinpis2}
    (\iota_g)_H^{!}\circ\Psi_2^{X^g}
=
2^{-i/2}\Psi_2^{Y^g}\circ(\iota_g)_H^{!}
\end{equation}
where $(\iota_g)_H^{!}:H_*(X^g,\mathbb{C})\rightarrow H_{*-i}(Y^g,\mathbb{C})$ is the homological Gysin map. 
Using the above identities, Lemma  \ref{gysintopanalcomparison} applied to $\iota_g$, the triviality of its normal bundle,
we obtain
\begin{equation}\label{gysinlocallclass}
(\iota_g)_H^{!}
\bigl(
\Psi^{X^g}_2(Ch_g\circ\gamma^X_g([D_X^{sign,G}]_g))
\bigr)
=
\Psi^{Y^g}_2
\bigl(
Ch_g\circ\gamma^Y_g([D_Y^{sign,G}]_g)
\bigr)
\end{equation}
 By the projection formula,
\[
\langle f^*(u),j_*(\Psi_2(Ch_g\circ\gamma_g^X([D_X^{sign,G}]_g)))\rangle
=
\langle (f|_{X^g})^*(u),\Psi_2(Ch_g\circ\gamma_g^X([D_X^{sign,G}]_g))\rangle
\]
Let $\widetilde{u}_p\in H^i(S^i,S^i\setminus\{p\};\mathbb{Z})$ be the local orientation class at $p$ mapping to $u$. Then
\[
\tau_{\iota_g}:=(f|_{X^g})^*(\widetilde{u}_p)
\in H^i(X^g,X^g\setminus Y^g;\mathbb{Z})
\]
is the Thom class of $\iota_g$, and its image in $H^i(X^g,\mathbb{Z})$ is $(f|_{X^g})^*(u)$. By the Thom-class definition of the homological Gysin map, for every $a\in H_*(X^g,\mathbb{C})$,
\[
\langle (f|_{X^g})^*(u),a\rangle
=
\deg((\iota_g)^!_H(a))
\]
Applying this to
\[
a=\Psi_2(Ch_g\circ\gamma_g^X([D_X^{sign,G}]_g))
\]
and using (\ref{gysinlocallclass}), we obtain
\[
\langle f^*(u),j_*(\Psi_2(Ch_g\circ\gamma_g^X([D_X^{sign,G}]_g)))\rangle
=
\deg\bigl(
\Psi_2(Ch_g\circ\gamma_g^Y([D_Y^{sign,G}]_g))
\bigr)
\]
Since $\Psi_2$ acts trivially on degree-zero homology, the right-hand side equals
\[
\deg\: Ch_g\circ\gamma_g^Y([D_Y^{sign,G}]_g)
=
Sign(g,Y)
\]
\end{proof}
\begin{lemma}\label{gysintopanalcomparison}
Let $X$ and $Y$ be compact smoothly stratified spaces, and let
$\iota:Y\hookrightarrow X$ be an oriented normally nonsingular
inclusion with trivial normal bundle $N_{Y/X}$ of even rank
$r=2k$.  Then, for every
$\zeta\in K_0^{an}(X)[\frac{1}{2}]$
\begin{equation}\label{gysintopanalcomparisoneq}
\iota_H^!(Ch_X(\zeta))
=
Ch_Y(\iota_{an}^!\zeta)
\end{equation}
where $\iota_H^{!}:H_*(X,\mathbb{C})\rightarrow H_{*-r}(Y,\mathbb{C})$ is the homological Gysin map and \[
\iota_{an}^{!}:
K_0^{an}(X)[\frac{1}{2}]
\rightarrow
K_0^{an}(Y)[\frac{1}{2}]
\]
is the analytic Gysin map.
\end{lemma}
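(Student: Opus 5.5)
The plan is to unwind the definition (\ref{gysin=sphires}) of $\iota_{an}^!$ into its three pieces, restriction to a tubular neighborhood, the identification $\phi$, and the zero-section Gysin map $s^!$, and to check (\ref{gysintopanalcomparisoneq}) piece by piece. The homological Gysin map factors the same way: $\iota_H^!$ is restriction $H_*(X)\to H_*(X,X\setminus Y)\cong H_*(U,U\setminus Y)$ by excision, followed by the identification $U\cong N_{Y/X}\cong Y\times\mathbb{R}^r$ and cap product with the Thom class, i.e.\ the suspension isomorphism $H_*(Y\times(\mathbb{R}^r,\mathbb{R}^r\setminus 0))\cong H_{*-r}(Y)$.

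\textbf{Step 1: restriction and tubular identification.} The analytic Chern character $Ch=ch^{geo}\circ\mu^{-1}$ is natural for restriction to open subsets (for Borel--Moore/locally finite homology) and for homeomorphisms. This follows from naturality of $\mu$ and $ch^{geo}$ under restriction of geometric cycles. It lets me replace $X$ by $U\cong Y\times\mathbb{R}^r$, using the $K$-homology of $U$ with compact-support conventions to match $H_*(X,X\setminus Y)$.

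\textbf{Step 2: the zero-section Gysin map for a trivial bundle.} Since $N_{Y/X}$ is trivial, $\Sigma(\pi)$ is the exterior product of $1_Y$ with the class of the signature operator on $\mathbb{R}^r$, viewed in $KK(C_0(\mathbb{R}^r),\mathbb{C})$, by the multiplicativity used in (\ref{MULTIPLICATIVEX_s^K}). Consequently $s^!$ is exterior Kasparov product with the inverse of $2^{-k}[D^{sign}_{\mathbb{R}^r}]$, the Bott-type element. I would then use multiplicativity of $Ch$ under $\boxtimes$ together with $Ch([D^{sign}_{\mathbb{R}^r}])=2^{k}[\mathbb{R}^r]$ (the local version of $Ch([D^{sign}_{S^k}])=2^{k/2}[S^k]$, whose higher Pontryagin/$L$ terms vanish since $T\mathbb{R}^r$ is trivial). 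Together these give $Ch(2^{-k}[D^{sign}_{\mathbb{R}^r}])=[\mathbb{R}^r]$, the fundamental (Thom) class. Hence for $\eta\boxtimes 2^{-k}[D^{sign}_{\mathbb{R}^r}]$ we have $Ch_{Y\times\mathbb{R}^r}=Ch_Y(\eta)\times[\mathbb{R}^r]$. Slant/cap with the Thom class then returns $Ch_Y(\eta)$, which is exactly $Ch_Y(s^!(\cdot))$. Since $K_0^{an}(Y\times\mathbb{R}^r)[\frac12]$ is generated by such products via Bott periodicity, this proves the claim.

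\textbf{Main obstacle.} The hard part will be the normalization bookkeeping and the Künneth/Bott periodicity step. I must verify the following three points.
\begin{enumerate}
\item $Ch$ is multiplicative for Kasparov exterior products, which I would check on geometric cycles $(M,E,f)\times(M',E',f')$ using multiplicativity of $Td^{spin^c}$.
\item The factor $2^{-k}$ in $s^!$ exactly cancels the $2^{k}$ in $Ch$ of the fiberwise signature operator; this is the point of the normalization chosen after \cite[Def. 5.11]{ABP1}.
\item Every class in the compactly supported $K$-homology of $Y\times\mathbb{R}^r$ is of the form $\eta\boxtimes 2^{-k}[D^{sign}_{\mathbb{R}^r}]$ after inverting $2$.
\end{enumerate}
For the first point I would cite \cite{BE}. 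For the third I would use that exterior product with the Bott generator is an isomorphism, and that the signature class on $\mathbb{R}^r$ is $2^k$ times the Bott/Dirac generator, so it is invertible after inverting $2$.
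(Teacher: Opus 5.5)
Your argument is correct in outline, but it takes a different route from the paper.

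\textbf{The paper's route.} The paper never unwinds $\iota^!_{an}$. It uses $[D^{sign}_{\mathbb{R}^{2k}}]=[S^*]\cap[D^{spin^c}_{\mathbb{R}^{2k}}]=2^k[D^{spin^c}_{\mathbb{R}^{2k}}]$ to show that, for a framed normal bundle, the normalized signature Gysin map equals the $spin^c$ Gysin map $\iota^!_{an,spin^c}$. It then cites \cite[Thm.~6.6]{ABP2}, which says $\lambda_c$ intertwines the topological and analytic $spin^c$ Gysin maps. Finally it uses that $ch^{top}$ commutes with Gysin maps when the normal bundle is trivial (no Todd correction), together with $Ch=ch^{top}\circ\lambda_c^{-1}$.

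\textbf{Your route.} You stay inside analytic $K$-homology. You decompose $\iota^!_{an}=s^!\circ\phi_*\circ res_U$. For the trivial bundle you identify $s^!$ with the Bott inverse of exterior product with $2^{-k}[D^{sign}_{\mathbb{R}^r}]$. You then conclude from multiplicativity of $Ch$, from $Ch(2^{-k}[D^{sign}_{\mathbb{R}^r}])=[\mathbb{R}^r]$, and from Bott periodicity. The identity $[D^{sign}]=2^k[D^{spin^c}]$ drives both proofs; you use it to get invertibility of the fibre class.

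\textbf{Trade-off.} Your proof is more self-contained. It avoids topological $K$-homology and the ABP2 comparison theorem, and it proves directly the Chern character/Gysin compatibility that the paper assembles from ABP2 plus the asserted commutativity of the left square of its diagram. The paper's proof is shorter and only ever applies $Ch$ on compact spaces.

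\textbf{What your route costs.} You need a Chern character on the locally finite group $K_0^{an}(U)=KK(C_0(U),\mathbb{C})$, with values in $H^{BM}_*(U)\cong H_*(X,X\setminus Y)$. It must be natural for $res_U$ and multiplicative with the non-compact factor $\mathbb{R}^r$. The paper's definition $Ch=ch^{geo}\circ\mu^{-1}$ does not supply this, and your stated justifications do not literally work, for three reasons:
\begin{enumerate}
\item Closed geometric cycles do not restrict to open subsets.
\item On a non-compact space, $\mu$ lands in compactly supported $K$-homology, which for $U\simeq Y$ is just $K_0(Y)$ and does not match $H_*(X,X\setminus Y)$. So ``compact-support conventions'' is the wrong convention; you need the $C_0$, i.e.\ locally finite, one.
\item $[D^{sign}_{\mathbb{R}^r}]$ is not a closed cycle, so multiplicativity cannot be checked on products of closed geometric cycles directly.
\end{enumerate}

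\textbf{The standard repair.} Pass to one-point compactifications. First, $res_U$ is the pushforward along the collapse map $X\to X/(X\setminus U)=U^+\cong Y_+\wedge S^r$, followed by $K_0(U^+)\to K_0(U^+,\infty)$. Second, $[D^{sign}_{\mathbb{R}^r}]$ is the image of $[D^{sign}_{S^r}]$ in $K_0(S^r,\mathrm{pt})$, by locality of elliptic classes under restriction to open sets; here $Ch([D^{sign}_{S^r}])=2^k[S^r]$. Multiplicativity can then be checked on closed cycles over the compact space $Y\times S^r$. Use the framing orientation both for $D^{sign}_{\mathbb{R}^r}$ and for the Thom class, so that the signs agree.
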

\begin{proof} Choose an oriented framing of $N_{Y/X}$ and equip it with the
induced $spin^c$ structure.
First note that in the case of trivial normal bundle  the oriented analytic Gysin map  $\iota_{an}^!$ (constructed from the signature operator of normal bundle, see \cite{ABP1} ) 
agrees with the $spin^c$ analytic Gysin map
$\iota_{an,spin^c}^!$   (constructed from the $spin^c$ Dirac operator). Indeed,  let
\[
\pi:N_{Y/X}\cong Y\times\mathbb{R}^{2k}\longrightarrow Y
\]
be the normal-bundle projection. Let
\[
\alpha_N^{sign}
=
2^{-k}[D_{N_{Y/X}/Y}^{sign}]
\in
KK_{2k}(C_0(N_{Y/X}),C(Y))[\tfrac{1}{2}]
\]
be the normalized fiberwise signature element, and let
\[
\alpha_N^{spin^c}
\in
KK_{2k}(C_0(N_{Y/X}),C(Y))
\]
be the fiberwise $spin^c$ Dirac element determined by
the framing. Denote their inverse elements by
$\beta_N^{sign}$ and
$\beta_N^{spin^c}$, respectively. For an  oriented even-dimensional $spin^c$ manifold $M$, the signature operator can be regarded as a Dirac operator on the complexified Clifford bundle and  its complex Clifford bundle is $Cl_{\mathbb{C}}(T^*M)\cong End(S_M)\cong S_M\otimes S^*_M$ \cite[Lemma~13]{RW2}, therefore
\[
[D_{\mathbb{R}^{2k}}^{sign}]
=[S^*_{\mathbb{R}^{2k}}] \cap 
[D_{\mathbb{R}^{2k}}^{spin^c}] =2^k [D_{\mathbb{R}^{2k}}^{spin^c}]
\]
where $S^*_{\mathbb{R}^{2k}}\simeq {\underline{\mathbb{C}}}^{2^k}$ is the dual spinor bundle. Therefore
$
\alpha_N^{sign}
=
\alpha_N^{spin^c}
$
and 
$
\beta_N^{sign}
=
\beta_N^{spin^c}.
$\\
Let $U\subset X$ be a tubular neighborhood of $Y$, let
$
\varphi:N_{Y/X}\longrightarrow U
$
be the tubular-neighborhood stratified diffeomorphism, and let
$
e_U:C_0(U)\longrightarrow C(X)
$
be the extension by zero. The two analytic Gysin transfer  elements are
\[
\Sigma_{sign}(\iota)
=
\beta_N^{sign}
\otimes_{C_0(N_{Y/X})}
[(\varphi^{-1})^*]
\otimes_{C_0(U)}
[e_U]
\]
\[
\Sigma_{spin^c}(\iota)
=
\beta_N^{spin^c}
\otimes_{C_0(N_{Y/X})}
[(\varphi^{-1})^*]
\otimes_{C_0(U)}
[e_U]
\]
Hence
$
\Sigma_{sign}(\iota)
=
\Sigma_{spin^c}(\iota),
$ so
 $
\iota_{an}^!(\zeta)
=
\iota_{an,spin^c}^!(\zeta).
$ Let
\[
\lambda_c:K_0^{top}(-)\longrightarrow K_0^{an}(-)
\]
be the comparison isomorphism mentioned in Section (\ref{compairsonmaps}). By \cite[Thm. 6.6]{ABP2},
$\lambda_c$ commutes with the topological and analytic
$spin^c$ Gysin maps. Thus, the following diagram
commutes:
\begin{equation}\label{diag333proof}
\begin{tikzcd}
H_*(X,\mathbb{C}) \arrow{d}{\iota_H^{!}} \arrow[<-]{r}{ch_X^{top}}
& K_0^{top}(X)
\arrow[r,"\lambda_c"]
\arrow[d,"\iota_{top}^!"']
&
K_0^{an}(X)
\arrow[d,"\iota_{an,spin^c}^!"]
\\
H_*(Y,\mathbb{C}) \arrow[<-]{r}{ch_Y^{top}} & K_0^{top}(Y)
\arrow[r,"\lambda_c"]
&
K_0^{an}(Y).
\end{tikzcd}
\end{equation}
 See \cite[\S6]{ABP2} for the construction of $\iota_{top}^!$. The left square commutes as the normal  bundle is trivial. Let $\eta=\lambda_c^{-1}(\zeta)$. 
By the commutativity of \eqref{diag333proof} and the equality of the
two analytic Gysin maps,
\[
\iota_{an}^!(\zeta)
=
\lambda_c(\iota_{top}^!\eta)
\]
Since $Ch=ch^{top}\circ\lambda_c^{-1}$, see (\ref{lambda-cabp2}),  we obtain $Ch_Y(\iota_{an}^!\zeta)
=
ch_Y^{top}(\iota_{top}^!\eta)$.  Therefore,
\[
\iota_H^!(Ch_X(\zeta))=\iota_H^!(ch_X^{top}(\eta))
=
ch_Y^{top}(\iota_{top}^!\eta)=Ch_Y(\iota_{an}^!\zeta)
\]
\end{proof}
\begin{lemma}\label{localigysincomopatibility}
Let $G=\langle g\rangle$ be a finite cyclic group. Let $X$ and $Y$ be compact even-dimensional $G$-spaces (in the sense of \cite{BLP}) having the $G$-homotopy type of
finite $G$-$CW$ complexes. Suppose that
$\iota:Y\hookrightarrow X$ is a $G$-equivariant normally
nonsingular inclusion preserving orientation. Assume that the induced inclusion
$\iota_g:Y^g\hookrightarrow X^g$ is  normally nonsingular
and that
$
N_{Y^g/X^g}\cong N_{Y/X}|_{Y^g}
$
as $G$-equivariant oriented vector bundles, compatibly with the normally nonsingular tubular data. Then for every
$\xi\in K_0^{G,an}(X)[\frac{1}{2}]$,
\[
\gamma_g^Y((\iota_{an}^{!}\xi)_g)
=
(\iota_g)_{an}^{!}(\gamma_g^X(\xi_g))
\]
\end{lemma}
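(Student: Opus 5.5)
Since $\gamma_g^X$ and $\gamma_g^Y$ are by definition the inverses of the localized pushforwards $(i^{an}_X)_*$ and $(i^{an}_Y)_*$ (Theorem \ref{inverseoflocalization}), the identity is equivalent to
\[
(\iota_{an}^{!}\xi)_g=(i^{an}_Y)_*\bigl((\iota_g)_{an}^{!}\,\eta\bigr),\qquad \eta:=\gamma_g^X(\xi_g),
\]
where $i_X:X^g\hookrightarrow X$, $i_Y:Y^g\hookrightarrow Y$, and $\xi_g=(i^{an}_X)_*\eta$. So it suffices to prove a base-change identity for the square formed by $\iota$, $\iota_g$, $i_X$, $i_Y$:
\[
\iota_{an}^{!}\circ (i_X)_*=(i_Y)_*\circ(\iota_g)_{an}^{!}\quad\text{on }K_0^{G,an}(X^g)[\tfrac12]_g .
\]
Here we use that $\iota^!_{an}$ is $R(G)$-linear, being a Kasparov product with a $G$-equivariant $KK$-element, so it commutes with localization at the prime of $g$.

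I would verify this identity by unwinding the decomposition (\ref{gysin=sphires}), $\iota_{an}^!=s^!\circ\phi_*\circ res_U$, and checking naturality factor by factor. First choose the $G$-tubular neighborhood $U\cong N_{Y/X}$ of $Y$. The compatibility hypothesis lets us take $U\cap X^g$ as the tubular neighborhood of $Y^g$ in $X^g$, identified with $N_{Y^g/X^g}\cong N_{Y/X}|_{Y^g}$. Then:
\begin{itemize}
\item Restriction to open subsets commutes with proper pushforward along the closed inclusion, i.e.\ $res_U\circ(i_X)_*=(i_{U})_*\circ res_{U\cap X^g}$. In $KK$ this is the identity $[e_U]\otimes[i_X^*]=[i_{U}^*]\otimes[e_{U\cap X^g}]$, which holds already at the level of $C^*$-homomorphisms $C_0(U)\to C(X^g)$.
\item The tubular diffeomorphisms are compatible by hypothesis, so $\phi_*$ commutes with the pushforward.
\item For the zero-section Gysin map, $N_{Y^g/X^g}$ is the pullback of $N_{Y/X}$ along $Y^g\hookrightarrow Y$. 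Hence the fiberwise signature element $\Sigma(\pi)$ of \cite[Thm. 4.3]{BLP} restricts to the corresponding element for $N_{Y^g/X^g}$. Equivalently, $[i_N^*]\otimes\Sigma(\pi_Y)=\Sigma(\pi_{Y^g})\otimes[i_Y^*]$ in $KK^G(C_0(N_{Y/X}),C(Y^g))$, by naturality of families of fiberwise operators under pullback of the base. The normalizing factor $2^{-\mathrm{rank}/2}$ agrees because the ranks coincide.
\item Inverting the two $\Sigma$'s, which are $KK$-equivalences, then gives $s_Y^!\circ (i_N)_*=(i_Y)_*\circ s_{Y^g}^!$.
\end{itemize}
Concatenating these three squares gives the base-change identity, and applying $\gamma_g^Y$ gives the lemma.

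The main obstacle is this last step: $s^!$ is defined through an inverse KK-element. So I must check commutation with the \emph{pushforward} $(i_N)_*$, which is a product with $[i_N^*]$ on the left, rather than with a pullback. The pullback naturality of $\Sigma(\pi)$ is associativity of the Kasparov product applied to a fiberwise operator restricted over the subspace $Y^g$. From it, the identity with the inverses follows by multiplying on both sides by $\Sigma(\pi_{Y^g})^{-1}$ and $\Sigma(\pi_Y)^{-1}$.

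A second subtlety is that $G$ acts nontrivially on the fibres of $N_{Y^g/X^g}$ even though it fixes $Y^g$. The equality $N_{Y^g/X^g}\cong N_{Y/X}|_{Y^g}$ as $G$-equivariant oriented bundles is therefore exactly what makes the two equivariant signature families match. I would use it in that form, without needing any further splitting of the normal bundle into eigenbundles.
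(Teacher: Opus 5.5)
Your proposal is correct and is essentially the paper's proof. Like the paper, you reduce via $\gamma_g=(i^{an})_*^{-1}$ to commutativity of the base-change square for $\iota,\iota_g,i_{X^g},i_{Y^g}$, split $\iota^!_{an}=s^!\circ\phi_*\circ res_U$, and observe that only the zero-section square over the Cartesian square $N_{Y^g/X^g}\to N_{Y/X}$, $Y^g\to Y$ is nontrivial. The paper simply cites that step (\cite[Prop.~6.10]{ABP1}, equivariantly), whereas you derive it from naturality of $\Sigma(\pi)$ under restriction of the base together with the two-sided invertibility of the $\Sigma$'s.

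Two small corrections. First, your $KK$ identity has the subscripts swapped: it should read $[i_N^*]\otimes\Sigma(\pi_{Y^g})=\Sigma(\pi_Y)\otimes[i_Y^*]$. Second, your closing remark is backwards. Since $N_{Y^g/X^g}$ is modeled $G$-equivariantly on a neighborhood inside $X^g$, $g$ acts trivially on it, so the hypothesis forces $g$ to act trivially on $N_{Y/X}|_{Y^g}$. Your first bullet actually needs exactly this, because $\phi(U\cap X^g)$ is only the $g$-fixed part of $N_{Y/X}|_{Y^g}$.
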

\begin{proof}
Let $i_{X^g}:X^g\hookrightarrow X$ and $i_{Y^g}:Y^g\hookrightarrow Y$ be the
fixed-point inclusions. Consider
\begin{equation}\label{diaglocalgysin}
\begin{tikzcd}
K_0^{G,an}(X^g)_g[\frac{1}{2}]
\arrow[r,"(\iota_g)_{an}^{!}"]
\arrow[d,"(i_{X^g})_*"']
&
K_0^{G,an}(Y^g)_g[\frac{1}{2}]
\arrow[d,"(i_{Y^g})_*"]
\\
K_0^{G,an}(X)_g[\frac{1}{2}]
\arrow[r,"\iota_{an}^{!}"']
&
K_0^{G,an}(Y)_g[\frac{1}{2}]
\end{tikzcd}
\end{equation}
  The maps
$(i_{X^g})_*$ and $(i_{Y^g})_*$ are induced by the Kasparov
product with $[i_{X^g}^*]$ and $[i_{Y^g}^*]$, the  restriction homomorphisms of $C^*$-algebras, with inverses
$\gamma_g^X$ and $\gamma_g^Y$. Therefore, it suffices to prove  the commutativity of (\ref{diaglocalgysin}). By (\ref{gysin=sphires}) 
$$\iota_{an}^{!}=s^!\circ \phi_*\circ res_U$$
where $$s^!: K^{G,an}(N_{Y/X})[\frac{1}{2}] \rightarrow K^{G,an}(Y)[\frac{1}{2}]$$ is the Gysin map of zero-section $s:Y\rightarrow N_{Y/X}$. Similarly, define these symbols with $(-)_g$ for the map $(\iota_g)_{an}^{!}$. By Kasparov multiplicativity,  to show the commutativity of (\ref{diaglocalgysin}), the only nontrivial part is to show the commutativity of  (as $\phi_*$ and $res_U$ are just a diffeomorphism and a restriction)
\begin{equation}\label{diaglocalgysin11}
\begin{tikzcd}
K_0^{G,an}(N_{Y^g/X^g})_g[\frac{1}{2}]
\arrow[r,"s_g^{!}"]
\arrow[d,"(r_{g})_*"']
&
K_0^{G,an}(Y^g)_g[\frac{1}{2}]
\arrow[d,"(i_{Y^g})_*"]
\\
 K^{G,an}(N_{Y/X})_g[\frac{1}{2}]
\arrow[r,"s^{!}"']
&
K_0^{G,an}(Y)_g[\frac{1}{2}]
\end{tikzcd}
\end{equation}
where $(r_{g})_*:K_0^{G,an}(N_{Y^g/X^g})_g \rightarrow K^{G,an}(N_{Y/X})_g$ is the Kasparov product with the restriction homomorphism $C_0(N_{Y/X})\rightarrow C_0(N_{Y^g/X^g})$. This is the base change for Gysin restriction to zero-sections applied to the following Cartesian diagram, proved (non-equivariantly) in \cite[Prop. 6.10]{ABP1}. The equivariant version follows identically. 
\begin{equation}
\begin{tikzcd}
N_{Y^g/X^g}
\arrow{r}{r_g}
\arrow{d}{\pi_g}
&
N_{Y/X}
\arrow{d}{\pi}
\\
Y^g
\arrow{r}{i_{Y^g}}
&
Y
\arrow[ul,phantom,very near start,"\lrcorner"]
\end{tikzcd}
\end{equation}
\end{proof}
\subsection{$L$-classes of the orbit space}
As explained in the introduction, by \cite{B26}, one can express the Goresky--MacPherson $L$-class of $X/G$ as an average of equivariant $L$-classes.  The following corollary follows immediately from Theorem  \ref{banaglintro} and Theorem \ref{Banagl_localization_comparison}.
\begin{corollary}\label{averageLclass}
   Let $G$ be a finite group and $X$ an oriented closed Witt pseudomanifold upon
which $G$ acts simplicially, preserving the orientation.  We have
    \begin{equation}\label{averageLclasseq}
        L^{an}_*(X/G)=\frac{1}{|G|}\sum_{g\in G} \pi^g_*Ch(\left[D_X^{sign,G}\right],g)=\frac{1}{|G|}\sum_{g\in G} \pi^g_*(L^{an}_*(X,g))
    \end{equation}
\end{corollary}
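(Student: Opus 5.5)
The plan is to apply the renormalization $\Psi_2^{-1}$ to Theorem \ref{banaglintro} and feed in the identification of Theorem \ref{Banagl_localization_comparison}. By (\ref{ch=mathcalL}) and the convention that $\Psi_2 L^{an}_*=L_*$ for Witt spaces, we have $L_*(X/G)=\Psi_2 L^{an}_*(X/G)$. Here $X/G$ is a Witt space by Theorem \ref{banaglintro}, and its quotient triangulation makes it a compact Witt pseudomanifold, so its analytic class is defined. Theorem \ref{banaglintro} then reads
\[
\Psi_2 L^{an}_*(X/G)=\frac{1}{|G|}\sum_{g\in G}\pi_*L_*(X,g).
\]

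Next, substitute $L_*(X,g)=j^g_*\Psi_2(L^{an}_*(X,g))$ from Theorem \ref{Banagl_localization_comparison}, where $j^g:X^g\hookrightarrow X$. Since $\pi^g=\pi\circ j^g$, functoriality gives $\pi_*j^g_*=\pi^g_*$. The homological Adams operation $\Psi_2$ is defined degreewise, $\Psi_2|_{H_{2k}}=2^{-k}\mathrm{id}$, and proper pushforward in homology preserves degree. Hence $\pi^g_*\circ\Psi_2^{X^g}=\Psi_2^{X/G}\circ\pi^g_*$, and we obtain
\[
\Psi_2 L^{an}_*(X/G)=\Psi_2\Big(\frac{1}{|G|}\sum_{g\in G}\pi^g_*L^{an}_*(X,g)\Big).
\]

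Because $\Psi_2$ is an isomorphism on $H_*(X/G,\mathbb{C})$, we can cancel it to get the second equality in (\ref{averageLclasseq}). The first equality is just Definition \ref{localizedLclass}.

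The only points needing care are bookkeeping rather than real obstacles. First, one should confirm that the renormalization convention $\Psi_2L^{an}_*=L_*$ applies to the quotient $X/G$ with its induced stratification; this follows from the metric-independence of $[D^{sign}_{X/G}]$. Second, if some $X^g$ is disconnected, the identity is read componentwise, as in convention (a) of Section \ref{convennotaion}. Both are routine, so I expect the argument to be a direct combination of the cited theorems.
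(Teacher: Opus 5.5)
Your proposal is correct and is exactly the paper's argument: the corollary is obtained by combining Theorem \ref{banaglintro} with Theorem \ref{Banagl_localization_comparison}, using $\pi_*j^g_*=\pi^g_*$, the degreewise commutation of $\Psi_2$ with proper pushforward, and the invertibility of $\Psi_2$. One small remark: the identity $\Psi_2L^{an}_*(X/G)=L_*(X/G)$ comes from (\ref{ch=mathcalL}) applied to $X/G$, which is a triangulated and hence smoothly stratified Witt space; it does not come from the metric-independence of $[D^{sign}_{X/G}]$.
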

\begin{remark} 
    One can give another proof of (\ref{averageLclasseq}) after applying  the analytic Lefschetz--Riemann--Roch, Theorem \ref{LRRan}, if one proves
\begin{equation}\label{siangureclassofqutientktheory}
(\pi_*\left[D_X^{sign,G}\right])^G=[D_{X/G}^{sign}]  \in K^{an}_0(X/G)
\end{equation}
    As suggested to the author by E. Leichtnam, this may be possible by expressing $L^2$ forms on $X/G$ by the equivariant $L^2$ forms on $X$. Note that (\ref{siangureclassofqutientktheory}) corresponds under Conjecture \ref{G-K-CONJECTURE} to the fact 
        \[MHC_1(IC_{X/G}'^H)\cong MHC^G_1((\pi_* IC_{X}'^H)^G)=(\pi_*(MHC^G_1(IC_{X}'^H)))^G\]
    where the first equality comes from $IC_{X/G}'^H\cong (\pi_* IC_{X}'^H)^G$ proved in
    \cite[Lemma 5.6]{CMSS}.

\end{remark}
\section{Explicit formula for localization}
In this section we review explicit formulas for the computation of localization maps $L_g$ and $\gamma_g$. These formulas show that, under suitable transversality and normal nonsingularity conditions on the inclusion $i:X^g\hookrightarrow X$, a localized class can be expressed as a pullback together with a (co)normal factor.

\subsection{Analytic localized class $\gamma_g([D^{sign,G}_X]_{{g}})$}
In this section, we review a result of \cite{BLP} on an explicit formula for $L^{an}_*(X,g)$  under a strongly normally nonsingular inclusion assumption on $X^g\hookrightarrow X$.

Let $G=\langle g\rangle$. Let $X$ be a compact even-dimensional oriented G-Witt pseudomanifold. For simplicity of notation we assume $X^g$ is connected, hence $F=X^g$ in the language of \cite{BLP}. Assume $i:X^g\rightarrow X$ is $G$-equivariantly strongly normally nonsingular, in the sense of \cite[Def. 2.10]{BLP}. we recall the definition.
\begin{definition} \label{BLP_normallynonsing}\cite[Def. 2.10]{BLP} Let $i:F \hookrightarrow X$ be an equivariant inclusion of smoothly Thom--Mather $G$-stratified spaces.  It is \textit{$G$-equivariantly strongly normally nonsingular} if there exists a $G$-equivariant real vector bundle $p : E \rightarrow F$, a $G$-invariant open neighborhood $U \subset X$ of $F$, and a $G$-equivariant stratified diffeomorphism $\phi:U \rightarrow E$ respecting the relevant families of equivariant control data, and whose restriction to $F$ is the zero section
inclusion $F \subset E$.
\end{definition}
There is a  stratified diffeomorphism $U_g\rightarrow E_g$ with $E_g$  the $G$-equivariant normal bundle of $X^g$ and $U_g$ a $G$-equivariant tubular neighborhood of $X^g$ in $X$. There is a decomposition of the normal bundle by the eigenvalues of the action of $g$:
\[E_g= E_g(-1) \bigoplus_{0<\theta_j<\pi} E_c(e^{i\theta_j}) \] 
$E_g(-1)$ is an oriented even-dimensional real vector bundle on which $g$ acts by $-1$, and $ E_c(e^{i\theta_j})$ admits a $G$-invariant complex structure where $g$ acts by $e^{i\theta_j}$. Assume $E_g(-1)$ admits a complex structure, and the normal bundle $E_g$ is a $G$-equivariant complex vector bundle, hence oriented.  The orientation of $X$ and $E_g$ induces an orientation on $X^g$.
\begin{theorem}\cite[Prop. 6.13]{BLP} 
    With the above assumptions,
\begin{equation}\label{localizedfrmulablp613}
    \gamma_{{g}}(\left[D_X^{sign,G}\right]_{{g}})= (-1)^{\dim_{\mathbb{C}}E_g}[[\mathcal{E}_{X^g}]]\otimes \left[D_{X^g}^{sign}\right]_{{g}}
\end{equation}
where
\[[[\mathcal{E}_{X^g}]] \in KK_0^G(X^g,X^g)_{{g}}, \: \left[D^{sign, G}_{X^g}\right]_{{g}}\in K^{G,an}_0(X^g)_{{g}}=KK_0^G(X^g,\mathbb{C})_{{g}}\]
where $[[\mathcal{E}_{X^g}]]$ is given by the product 
\[\bigotimes[[\mathcal{E}(e^{i\theta_j})]]\otimes[[\mathcal{E}(e^{i\pi})]]\]
where the following classes are given by "quotient" virtual bundles which have meaning in the localized
K-theory group  in $KK^G_0(X^g,X^g)_g\cong KK_0(X^g,X^g)\otimes R(G)_g$
\[[[\mathcal{E}(e^{i\theta_j})]]=\frac{[[\bigwedge E_c(e^{i\theta_j})]]}{[[\bigwedge^{even} E_c(e^{i\theta_j})]]-[[\bigwedge^{odd} E_c(e^{i\theta_j})]]}\]
When $\theta=\pi,\: $
\[[[\mathcal{E}(-1)]]=\frac{[[S(E_g(-1))]]}{[[S^+(E_g(-1))]]-[[S^-(E_g(-1))]]}\]
\end{theorem}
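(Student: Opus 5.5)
The plan is to reduce to a localized Thom isomorphism in analytic equivariant $K$-homology for the normal bundle $E_g$, following Rosenberg's argument \cite{R} as adapted in \cite{BLP}.

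\emph{Step 1: reduce to the tubular neighborhood.} By strong normal nonsingularity (Definition \ref{BLP_normallynonsing}), $X^g$ has a $G$-invariant neighborhood $U\cong E_g$. By excision and the localization Theorem \ref{inverseoflocalization}, after localizing at $g$ the restriction $K_0^{G,an}(X)_g\to K_0^{G,an}(U)_g$ is compatible with $\gamma_g$. This is because $X\setminus U$ has no $g$-fixed points, so its localized $K$-homology vanishes. The restriction of $[D_X^{sign,G}]$ to $U$ is the signature class of the open stratified space $U$ with a $G$-invariant wedge metric. By the multiplicativity/fibration functoriality of \cite[Cor. 5.16]{ABP1} and \cite[Thm. 4.3]{BLP}, it factors as a Kasparov product
\[
[D_U^{sign,G}]=\Sigma(p)\otimes_{C(X^g)}[D_{X^g}^{sign,G}],
\]
where $\Sigma(p)$ is the vertical family of fiberwise signature operators on $p:E_g\to X^g$. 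The action on $X^g$ is trivial, so $[D_{X^g}^{sign,G}]=[D^{sign}_{X^g}]\otimes 1$.

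\emph{Step 2: compute the localization of $\Sigma(p)$.} We must compute $\gamma_g$ applied to the fiberwise class, i.e. the Kasparov product with the restriction $[s^*]$ of $C_0(E_g)$ to the zero section. Using the eigenbundle splitting $E_g=E_g(-1)\oplus\bigoplus_j E_c(e^{i\theta_j})$, and the fact that $\Sigma(p)$ is an exterior product over the summands, we treat each summand separately. For a complex summand $E_c$, the fiberwise signature operator equals the fiberwise Dolbeault operator twisted by $\bigwedge E_c$; this is the same Clifford identification $Cl_{\mathbb{C}}\cong S\otimes S^*$ as in Lemma \ref{gysintopanalcomparison}. Restricting the Thom class to the zero section gives the Euler class $\lambda_{-1}E_c=[\bigwedge^{even}]-[\bigwedge^{odd}]$. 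This is invertible in $R(G)_g$, since $g$ acts without eigenvalue $1$, and the Atiyah--Segal argument makes its character nonzero at $g$. Hence the localized inverse of the zero-section restriction is multiplication by $[[\bigwedge E_c]]/([[\bigwedge^{even}E_c]]-[[\bigwedge^{odd}E_c]])$. For $E_g(-1)$ we use the chosen complex, hence $spin^c$, structure and the spinor bundle instead of exterior powers. This gives $[[S]]/([[S^+]]-[[S^-]])$.

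\emph{Step 3: signs and assembly.} Composing the factors yields $[[\mathcal{E}_{X^g}]]\otimes[D^{sign}_{X^g}]_g$. The sign $(-1)^{\dim_{\mathbb{C}}E_g}$ arises from comparing the orientation of $X$ with the product orientation on $X^g\times E_g$, together with the grading convention of the complex versus the signature grading on each complex line. I will check this on a single complex line with $g$ acting by $e^{i\theta}$, directly against the $G$-signature of $S^2$ with rotation. The main obstacle is Step 2 on the singular space: the Thom/Bott inverse and the restriction must be performed in $KK^G(X^g,X^g)_g$ with $X^g$ a stratified space. This needs the equivariant Kasparov product to respect the control data, so that the fiberwise operator genuinely acts as a $C(X^g)$-linear family. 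I would first try to prove Bott periodicity for $E_g\to X^g$ as a $C(X^g)$-linear $KK^G$-equivalence, which reduces the computation to the pointwise representation-theoretic identity.
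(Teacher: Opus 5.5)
Your Steps 1 and 2 are the Rosenberg argument by which \cite{BLP} prove Prop.~6.13. The paper does not reprove that formula; it imports it. Its own input here is only the sign $(-1)^{\dim_{\mathbb{C}}E_g}$, attributed in Remark~\ref{comatbileoreination} to the switch to the Atiyah--Singer convention $TX|_{X^g}=TX^g\oplus E_g$ and checked in Example~\ref{examplesign}.

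Two repairs to Step 2:
\begin{enumerate}
\item $\gamma_g$ is the \emph{inverse} of the product with $[s^*]$, not the product itself. The clean way to invert it is to write $\Sigma(p)=\alpha\otimes[[\bigwedge E_g]]$ and $[s^*]=\alpha\otimes[[\lambda_{-1}E_g]]$, with $\alpha$ the $spin^c$ (Dolbeault) element. Then invert $[[\lambda_{-1}E_g]]$ in $K_G(X^g)_g\cong K(X^g)\otimes R(G)_g$, not in $R(G)_g$; under $\chi_g$ it becomes a nonzero constant plus nilpotents.
\item The $KK^G$-equivalence you list as the main obstacle is Kasparov's equivariant Thom isomorphism for $\alpha$. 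It holds over any compact base, so no control data enter there. The stratified input is your Step 1 factorization, supplied by strong normal nonsingularity and \cite[Thm.~4.3]{BLP}. The equivalence must be $\alpha$, not $\Sigma(p)$: when $E_g(-1)\neq 0$, the class $\chi_g[[\bigwedge E_g(-1)]]=\lambda_{-1}E_g(-1)$ is nilpotent, so $\Sigma(p)$ is not invertible even after localizing at $g$.
\end{enumerate}

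The genuine gap is Step 3, which is precisely the part of the statement specific to this paper.

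Of your two proposed sources of the sign, the first contributes nothing. $X^g$ is oriented so that $TX|_{X^g}=TX^g\oplus E_g$, and both summands are even-dimensional. So the product orientation on the model $E_g\to X^g$ agrees with that of $X$ by definition.

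Everything therefore rests on the fibre computation: on each complex line, does the $\tau$-grading of the fibrewise signature operator agree with the even/odd Koszul grading? Since $\frac{1+e^{-a}}{1-e^{-a}}=-\frac{1+e^{a}}{1-e^{a}}$, the two possibilities differ by $-1$ per complex dimension. This is exactly as if the normal factor were computed with $\overline{E_c}$ instead of $E_c$, and it is the discrepancy with \cite{BLP,R}.

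You have not computed this, and the test you propose cannot detect it. We have $\operatorname{Sign}(g,S^2)=0$, and both fixed points of a rotation have $\dim_{\mathbb{C}}E_g=1$. Their contributions are $\epsilon\, i\cot(\theta/2)$ and $-\epsilon\, i\cot(\theta/2)$ for either choice $\epsilon=\pm1$. Generic torus elements acting on $\mathbb{CP}^2$ fail as well, since every isolated fixed point there has $\dim_{\mathbb{C}}E_g=2$.

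You need one of two things:
\begin{enumerate}
\item an honest computation of the chirality operator on $\bigwedge^{0,*}E_c$ in your conventions; or
\item a sign-sensitive test, such as the paper's involution of $\mathbb{CP}^2$. There the isolated fixed point contributes $0$. The fixed $\mathbb{CP}^1$, with normal bundle $\mathcal{O}(1)$, gives $\langle -x/2,\,2[\mathbb{CP}^1]\rangle=-1$. Only with the factor $(-1)^{\dim_{\mathbb{C}}E_g}=-1$ does one recover $\operatorname{Sign}(g,\mathbb{CP}^2)=1$.
\end{enumerate}
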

\begin{remark}\label{comatbileoreination}
    Note that in \cite[Thm. 6.13]{BLP} and \cite[Prop.~3.8]{R}   there is no extra sign $(-1)^{\dim_{\mathbb{C}}E_g}$ as they use a different orientation of normal bundle. In this paper we follow the orientations as in  \cite{AS3} ,  there is a compatibility of orientations $TX|_{X^g}=TX^g \oplus E_g $. As explained in  \cite[4]{R}, the contribution of each normal subbundle differs from \cite{AS3} by a sign, i.e., $\textbf{Ch}_g([[\mathcal{E}(e^{i\theta})]])=(-1)^{\dim_{\mathbb{C}}N_{\theta}} C^{\theta}_1$, as seen in (\ref{normalcalcualtion}). We verify our correction sign in Example \ref{examplesign}.
\end{remark}
There is a bivariant Chern character \cite{P}, which for separable  $C^*$-algebras $A,B$ 
\[\textbf{Ch}:KK_0(A,B)\rightarrow HC^{*}_{loc}(A,B)\]
the  subscript $loc$ means local cyclic cohomology. In the case of $A=B=C(X^g)$ this gives \cite{BLP}
\[\textbf{Ch}:KK_0(X^g,X^g)\rightarrow Hom^{even}(H^{even}(X^g),H^{even}(X^g))\]
It's also compatible with the analytic Chern character $Ch:KK_0(X^g,\mathbb{C})\rightarrow H_*(X^g,\mathbb{C})$ by taking $A=C(X^g), B=\mathbb{C}$. As usual the subscript $(g)$ denotes composition with the evaluation map $\chi_g$. Puschnigg proved that this bivariant Chern character transforms Kasparov product into composition
\begin{equation}\label{Chgclass}
 \textbf{Ch}_g([[\mathcal{E}_{X^g}]]\otimes [D_{X^g}^{sign,G}]_{{g}})=\textbf{Ch}_g([[\mathcal{E}_{X^g}]])\circ Ch(\left[D_{X^g}^{sign}\right])
\end{equation}
where the homological class $Ch(\left[D_{X^g}^{sign}\right])$ is viewed as a form $H^*\rightarrow \mathbb{C}$. Also $Ch(\left[D_{X^g}^{sign}\right])=L^{an}_*(X^g)$. The class (\ref{Chgclass}) is a linear form on $H^{*}$ and it's given by \cite[Eq. $(23)$]{BLP}
\[v\mapsto \langle v\wedge Ch_g(\mathcal{E}_{X^g}),L^{an}_*(X^g)\rangle\]
Hence as a homology class it is identified with
\[Ch_g(\mathcal{E}_{X^g})\cap L^{an}_*(X^g)\]
where
\begin{align}
      Ch_g(\mathcal{E}(e^{i\theta_j}))=\frac{ch_g\:\bigwedge E_c(e^{i\theta_j})}{ch_g \bigwedge^{even} E_c(e^{i\theta_j})-ch_g \bigwedge^{odd} E_c(e^{i\theta_j})}=&\prod_l\frac{1+e^{x^l_j}e^{i\theta_j}}{1-e^{x^l_j}e^{i\theta_j}}\\
      =&\label{normalcalcualtion}(-1)^{\dim_{\mathbb{C}}E_c(e^{i\theta_j})}\prod_l  \coth(\frac{x^l_j+i\theta_j}{2})
\end{align}

 $\{x^l_j\}_l$ are Chern roots of $E_c(e^{i\theta_j})$. This also applies to $\theta_0=\pi$, assuming $E_g(-1)$ admits a $G$-invariant complex structure (which is the case if the setup comes from complex  geometry). Note that using the complex structure on $E_c(e^{i\theta_j})$ we can rewrite
\[Ch_g(\mathcal{E}(e^{i\theta_j}))=\frac{{ch}_g(\lambda_{1}E_c(e^{i\theta_j}))}{{ch}_g(\lambda_{-1}E_c(e^{i\theta_j}))}\]
by viewing $E_c(e^{i\theta_j})\in K_{G}^0(X^g)$ and $ch_g:K_{G}^0(X^g)\rightarrow H^*(X^g,
\mathbb{C})$.
\begin{corollary}\label{chernlocal} Assume
$i:X^g\rightarrow X$ is $G$-equivariantly strongly normally nonsingular.
If the normal bundle $E_g$ is even-dimensional and $E_g(-1)$ admits a $G$-invariant complex structure, set $\theta_0=\pi$. Then
    \[L^{an}_*(X,g):=Ch(\left[D_X^{sign,G}\right],g)=Ch_g\circ  \gamma_g(\left[D^{sign,G}_X\right]_{{g}})=(-1)^{\dim_{\mathbb{C}}E_g}  \prod_j {Ch}_g(\mathcal{E}(e^{i\theta_j})) \cap L^{an}_*(X^g)= \]\[(-1)^{\dim_{\mathbb{C}}E_g}\prod_j  \frac{{ch}_g(\lambda_1(E_c(e^{i\theta_j})))}{{ch}_g(\lambda_{-1}(E_c(e^{i\theta_j})))}   \cap L^{an}_*(X^g) \in H_*(X^g,\mathbb{C})\]
\end{corollary}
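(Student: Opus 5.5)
The chain of equalities in Corollary \ref{chernlocal} will be obtained by applying the Chern character $Ch_g$ to the explicit localization formula (\ref{localizedfrmulablp613}) of \cite[Prop. 6.13]{BLP}. The first equality is the definition of $L^{an}_*(X,g)$ (Definition \ref{localizedLclass}). The second unwinds Definition \ref{localizedchernclass}: since $G=\langle g\rangle$ is already cyclic, the restriction $res^G_{G'}$ is the identity. Hence $Ch([D^{sign,G}_X],g)=Ch_g\circ\gamma_g([D^{sign,G}_X]_g)$.

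For the third equality, the hypotheses are exactly those under which (\ref{localizedfrmulablp613}) holds: strong normal nonsingularity, $E_g$ even-dimensional, and $E_g(-1)$ carrying a $G$-invariant complex structure, so that $E_g$ is a $G$-equivariant complex bundle oriented compatibly with $TX|_{X^g}=TX^g\oplus E_g$ as in Remark \ref{comatbileoreination}. Substituting gives
\[
Ch_g\gamma_g([D_X^{sign,G}]_g)=(-1)^{\dim_{\mathbb{C}}E_g}\,Ch_g\bigl([[\mathcal{E}_{X^g}]]\otimes[D^{sign}_{X^g}]_g\bigr).
\]
Three ingredients then convert the right side into the claimed cap product.

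First, Puschnigg's bivariant Chern character turns the Kasparov product into composition, (\ref{Chgclass}). Second, compatibility of $\textbf{Ch}$ with $Ch$ on $KK_0(C(X^g),\mathbb{C})$ identifies the composite with the homology class $Ch_g(\mathcal{E}_{X^g})\cap L^{an}_*(X^g)$, by the pairing formula \cite[Eq. (23)]{BLP} and $Ch([D^{sign}_{X^g}])=L^{an}_*(X^g)$. Third, multiplicativity of $\textbf{Ch}_g$ on the product $\bigotimes_j[[\mathcal{E}(e^{i\theta_j})]]$ (Kasparov product of endomorphism classes goes to composition, which for cap-with-cohomology classes is cup product) gives $Ch_g(\mathcal{E}_{X^g})=\prod_j Ch_g(\mathcal{E}(e^{i\theta_j}))$, with the $\theta_0=\pi$ factor included.

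For the last equality I will rewrite each factor using the complex structure on $E_c(e^{i\theta_j})$. The numerator $\bigwedge E_c$ is $\lambda_1 E_c=\sum_k\bigwedge^k E_c$, and $\bigwedge^{even}-\bigwedge^{odd}=\lambda_{-1}E_c$ as classes in $K^0_G(X^g)$, on which $g$ acts by the scalar $e^{i\theta_j}$. Applying the ring homomorphism $ch_g$ to numerator and denominator separately gives $ch_g(\lambda_1E_c)/ch_g(\lambda_{-1}E_c)$. The denominator is invertible because $ch_g(\lambda_{-1}E_c)=\prod_l(1-e^{x^l_j}e^{i\theta_j})$ has nonzero degree-zero part when $\theta_j\neq 0$, which also justifies the localization. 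For $\theta_0=\pi$, I need to check that under the complex structure the spinor quotient $S(E_g(-1))/(S^+-S^-)$ has the same $ch_g$ as $\lambda_1/\lambda_{-1}$. This follows because for a complex bundle $S\cong\bigwedge^{\bullet}$ up to a line-bundle twist $\det^{1/2}$, which cancels in the ratio, and $S^\pm$ correspond to even and odd exterior powers.

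The main obstacle is this $\theta=\pi$ spinor identification, together with keeping the orientation sign $(-1)^{\dim_{\mathbb{C}}E_g}$ consistent with (\ref{normalcalcualtion}). The remaining steps are formal consequences of Puschnigg's theorem and the cited formula.
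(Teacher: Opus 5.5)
Your proposal is correct and follows essentially the same route as the paper. Both arguments apply $Ch_g$ to the localization formula (\ref{localizedfrmulablp613}) of \cite[Prop.~6.13]{BLP}, use Puschnigg's multiplicativity (\ref{Chgclass}) and the pairing formula \cite[Eq.~(23)]{BLP} to obtain $Ch_g(\mathcal{E}_{X^g})\cap L^{an}_*(X^g)$, and then rewrite each factor as $ch_g(\lambda_1E_c)/ch_g(\lambda_{-1}E_c)$ via the complex structure. Your explicit check of the $\theta_0=\pi$ factor, identifying the spinor bundle with $\bigwedge^\bullet E_g(-1)$ up to a twist that cancels in the ratio, justifies a step the paper only asserts in passing.
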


\begin{example}\label{examplesign}
    Let $G=\{1,g\}=\mathbb{Z}_2$ act on $\mathbb{CP}^2$, by $g[x:y:z]=(-x,y,z)$. Then the fixed point set is $X^g= \mathbb{CP}^1 \cup [1:0:0]$. The normal bundle of $\mathbb{CP}^1$ is $E:=E_{\mathbb{CP}^1}(-1)=\mathcal{O}_{\mathbb{CP}^1}(1)$. $H^2(\mathbb{CP}^2)$ is generated by the hyperplane class which is Poincar\'e dual to $[\mathbb{CP}^1]$, hence $g$ acts trivially on  $H^2(\mathbb{CP}^2)$, so $Sign(g,\mathbb{CP}^2)=1$. The usual complex structure on $\mathcal{O}_{\mathbb{CP}^1}(1)$ is $G$-invariant and  complex orientations of $T\mathbb{CP}^2|_{\mathbb{CP}^1}, T\mathbb{CP}^1, \mathcal{O}_{\mathbb{CP}^1}(1)$ are all compatible. We have $L^{an}_*(\mathbb{CP}^1)=2[\mathbb{CP}^1]$. The isolated fixed point $[1:0:0]$ doesn't contribute to the $g$-signature as the Chern roots of normal bundle vanish. We have
    \[\langle c_1(\mathcal{O}(1)),[\mathbb{CP}^1 ]\rangle=\deg(\mathcal{O}(1))=1\]
    Then $x= c_1(\mathcal{O}(1)), x^2=0 \rightarrow e^x=1+x$ 
    \[Ch_g(\mathcal{E}_{\mathbb{CP}^1}(-1))=\frac{1-e^x}{1+e^x}=\frac{-x}{2+x}=\frac{-x}{2}\]
    \[(-1)^{dim_{\mathbb{C}}E}\langle Ch_g(\mathcal{E}_{\mathbb{CP}^1}(-1)),L^{an}_*(\mathbb{CP}^1) \rangle =(-1) \langle \frac{-x}{2}, 2[\mathbb{CP}^1] \rangle=1=Sign(g,\mathbb{CP}^2)\]
\end{example}

\subsection{Algebraic localization formula via embedding  }\label{algebraiclocalizationembeding}
In this section, we review an explicit localization formula in the algebraic setting. Let
$G=\langle g\rangle$ be a finite cyclic group acting algebraically on a projective
complex algebraic variety $X$. 

As mentioned in Theorem \ref{coherencetraceformula1}, there is an isomorphism \cite{Q79,T1,Ta}
\[
L_g:G_0^G(X)_g\xrightarrow{\cong}G_0^G(X^g)_g
\]
which is the inverse of  the isomorphism
induced by the inclusion $i:X^g\hookrightarrow X$:
\[
i^{alg}_*:G_0^G(X^g)_g\xrightarrow{\cong}G_0^G(X)_g
\]
When $X$ is smooth and the inclusion $i:X^g\hookrightarrow X$ is a
regular embedding, we may identify
\[
G_0^G(X)\cong K_G^0(X)
\]
where $ K_G^0(X)$ is the Grothendieck group of $G$-equivariant locally free
$\mathcal{O}_X$-modules. In this case, the localization map is given by, for $[\mathcal{F}]\in K_G^0(X)$
\[
L_g([\mathcal{F}]_g)
=
i^*[\mathcal{F}]_g
\otimes
\lambda_{-1}(N^\vee)_g^{-1}
\]
where $N^\vee=N_{X^g/X}^\vee$ is the conormal bundle of $i$, and
$
i^*:K_G^0(X)_g\longrightarrow K_G^0(X^g)_g
$
is the equivariant pullback.

In the singular case, in order to describe the localized class, we choose
a $G$-equivariant closed immersion $j:X\hookrightarrow Z$ into a smooth proper $G$-variety $Z$. Since we are assuming $X$ to be projective such a $Z$ exists (for instance, by taking the product of $|G|$ copies of a projective
space containing $X$, with $G$ acting by permutation of the factors). Then $j$ induces a closed immersion of fixed point sets $j_g:X^g\hookrightarrow Z^g$. Using the identification of $G_0^G(X)$ with the equivariant $G_0$-theory of
algebraic coherent sheaves on $Z$ supported on $X$, we obtain the following formula.
\begin{theorem}(\cite{BFQ}, \cite[p.~75]{Ta}) \label{coherencetraceformula} Let $X$ be a projective complex algebraic variety, with an algebraic action by a finite cyclic group $G=\langle g\rangle$. Then for $[\mathcal{F}]\in G_0^G(X)$, the localized class
$$
L_g([\mathcal{F}]_g)=\mathcal{G}_g\in G_0^G(X^g)_g
$$
is represented, under the identification of $G_0^G(X^g)$ with the
equivariant $G_0$-theory of $Z^g$ supported on $X^g$, by the virtual coherent $G$-sheaf 
\begin{equation}\label{localform}
\mathcal{G}_g
= \lambda_{-1}
\left(
N_{Z^g/Z}^{\vee}
\right)_g^{-1}
\otimes_{\mathcal{O}_{Z^g}}
\left(
j_*\mathcal{F}
\otimes_{\mathcal{O}_Z}^{L}
\mathcal{O}_{Z^g}
\right)_g
\end{equation}
Equivalently,
\[
\mathcal{G}_g
=\left(
\sum_q(-1)^q
\left[
\operatorname{Tor}^{\mathcal{O}_Z}_q
\left(
\mathcal{O}_{Z^g},\mathcal{O}_{Z^g}
\right)
\right]
\right)_g^{-1}
\otimes_{\mathcal{O}_{Z^g}}
\left(
\sum_q(-1)^q
\left[
\operatorname{Tor}^{\mathcal{O}_Z}_q
\left(
j_*\mathcal{F},\mathcal{O}_{Z^g}
\right)
\right]
\right)_g
\]
\end{theorem}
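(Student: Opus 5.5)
The approach is to reduce to the smooth ambient variety $Z$ and then transport the answer back to $X$ through the identifications with supports. Write $i_Z:Z^g\hookrightarrow Z$ for the fixed-point inclusion of $Z$. Since $G$ is finite and $Z$ is smooth, $Z^g$ is smooth and $i_Z$ is a regular embedding with conormal bundle $N^\vee:=N^\vee_{Z^g/Z}$. Set-theoretically $X\cap Z^g=X^g$, so the square formed by $i:X^g\hookrightarrow X$, $i_Z$, $j$ and $j_g$ commutes. I would work throughout with equivariant $K$-theory with supports:
\[
G_0^G(X)\cong K^{G}_X(Z),\qquad G_0^G(X^g)\cong K^{G}_{X^g}(Z^g).
\]
These are the Grothendieck groups of bounded complexes of $G$-vector bundles that are exact off the support, as recalled in \S\ref{equivcomparison}. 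Under these identifications $i^{alg}_*$ becomes the supported pushforward $(i_Z)_*:K^G_{X^g}(Z^g)\to K^G_X(Z)$. This compatibility follows from the commutative square and the naturality of the dévissage isomorphisms.

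\emph{Step 1 (invertibility of the normal factor).} I will show that $\lambda_{-1}(N^\vee)$ becomes a unit in $K^0_G(Z^g)_g$. Since $g$ acts on $N^\vee$ without eigenvalue $1$, the $g$-character of the rank-zero part of $\lambda_{-1}(N^\vee)$ is $\prod(1-\zeta_l)\neq 0$, a product over the eigenvalues $\zeta_l\neq1$. Because $G$ acts trivially on $Z^g$, we have
\[
K^0_G(Z^g)\cong K^0(Z^g)\otimes R(G).
\]
The remaining part of $\lambda_{-1}(N^\vee)$ is nilpotent in the augmentation ideal of $K^0(Z^g)$. Hence the element is invertible after localizing at $g$, and this makes the formula \eqref{localform} meaningful. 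The Tor description then follows from the Koszul resolution:
\[
\sum_q(-1)^q[\operatorname{Tor}_q^{\mathcal O_Z}(\mathcal O_{Z^g},\mathcal O_{Z^g})]=\lambda_{-1}(N^\vee),
\]
because $\operatorname{Tor}_q=\bigwedge^q N^\vee$ for a regular embedding.

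\emph{Step 2 (self-intersection formula with supports).} For a supported class $a\in K^G_{X^g}(Z^g)$ I will prove
\[
i_Z^*(i_Z)_*a=\lambda_{-1}(N^\vee)\cdot a .
\]
The proof would restrict a Koszul-type resolution of $\mathcal O_{Z^g}$ locally, or use deformation to the normal cone. The derived pullback $i_Z^*=-\otimes^L_{\mathcal O_Z}\mathcal O_{Z^g}$ sends $K^G_X(Z)$ to $K^G_{X\cap Z^g}(Z^g)=K^G_{X^g}(Z^g)$. Hence the class $\mathcal G_g$ in \eqref{localform} indeed lies in $G_0^G(X^g)_g$.

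\emph{Step 3 (conclusion).} By Theorem \ref{coherencetraceformula1}, $(i_Z)_*$ is an isomorphism after localization at $g$. Set $\mathcal G_g=\lambda_{-1}(N^\vee)_g^{-1}\, i_Z^*(j_*\mathcal F)_g$. Then Step 2 gives
\[
i_Z^*\bigl((i_Z)_*\mathcal G_g\bigr)=\lambda_{-1}(N^\vee)_g\cdot\mathcal G_g=i_Z^*(j_*\mathcal F)_g .
\]
Next, Step 2 applied to $b=(i_Z)_*^{-1}$ of an arbitrary class shows that $i_Z^*$ agrees with $\lambda_{-1}(N^\vee)_g\cdot(i_Z)_*^{-1}$ on the localized group. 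Since $\lambda_{-1}(N^\vee)_g$ is a unit, $i_Z^*$ is therefore injective there. It follows that $(i_Z)_*\mathcal G_g=(j_*\mathcal F)_g$, that is, $i^{alg}_*\mathcal G_g=[\mathcal F]_g$, and so $L_g([\mathcal F]_g)=\mathcal G_g$.

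The main obstacle is the supported version of the localization isomorphism and of the self-intersection formula in Step 2. The key point is that Theorem \ref{coherencetraceformula1}, applied to $X$, must be compatible with the dévissage identifications $G_0^G(X)\cong K^G_X(Z)$. I would handle this by applying Theorem \ref{coherencetraceformula1} to the varieties $X$ and $X^g$ themselves, so that no separate supported concentration theorem for $Z$ is needed. The only input required is that $(i_Z)_*$ corresponds to $i^{alg}_*$, which is pure functoriality of proper pushforward along the commutative square.
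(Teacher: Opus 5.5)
The paper gives no proof of this statement; it is quoted from \cite{BFQ} and \cite{Ta}. Your proposal is correct, and it is the standard derivation of the formula:
\begin{itemize}
\item pass to $G$-theory with supports on the smooth spaces $Z$ and $Z^g$;
\item take the concentration isomorphism of Theorem~\ref{coherencetraceformula1} as given;
\item combine it with the self-intersection identity $i_Z^*(i_Z)_*=\lambda_{-1}(N^\vee)\cdot(-)$ and the invertibility of $\lambda_{-1}(N^\vee)_g$.
\end{itemize}
Because concentration is an input, the one-sided identity suffices. It gives $i_Z^*=\lambda_{-1}(N^\vee)_g\cdot(i_Z)_*^{-1}$ on $K^G_X(Z)_g$, hence $(i_Z)_*^{-1}=\lambda_{-1}(N^\vee)_g^{-1}\cdot i_Z^*$ directly, so the injectivity detour in your Step~3 can be dropped. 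Without citing concentration you would also need the other composite $(i_Z)_*\circ\lambda_{-1}(N^\vee)_g^{-1}\circ i_Z^*=\mathrm{id}$. That amounts to the vanishing of localized $G$-theory on the fixed-point-free open complement, which is the real content of Theorem~\ref{coherencetraceformula1}.

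Two points should be tightened.

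\emph{Step~1.} What you call the rank-zero part is the constant part, namely the fibrewise class $\prod_\zeta(1-\chi_\zeta)^{r_\zeta}\in R(G)$, whose character at $g$ is nonzero. It is the complementary part that has rank zero and is nilpotent. Nilpotence is only needed rationally, for instance via $ch:K^0(Z^g)_{\mathbb{Q}}\cong CH^*(Z^g)_{\mathbb{Q}}$. This suffices because $\mathbb{Q}\subset R(G)_g$.

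\emph{Step~2.} Local Koszul resolutions by themselves do not yield a global identity in $K$-theory. Use them only to check that the globally defined maps $a\otimes\bigwedge^q N^\vee\to\mathcal{T}or_q^{\mathcal{O}_Z}\bigl((i_Z)_*a,\mathcal{O}_{Z^g}\bigr)$ are isomorphisms. These maps are canonical, hence $G$-equivariant, and come from the $\mathcal{T}or_*^{\mathcal{O}_Z}(\mathcal{O}_{Z^g},\mathcal{O}_{Z^g})$-module structure. Equivalently, use the change-of-rings spectral sequence $\mathcal{T}or_p^{\mathcal{O}_{Z^g}}\bigl(a,\bigwedge^qN^\vee\bigr)\Rightarrow\mathcal{T}or_{p+q}^{\mathcal{O}_Z}\bigl((i_Z)_*a,\mathcal{O}_{Z^g}\bigr)$. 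It degenerates because $\bigwedge^qN^\vee$ is locally free. This gives the self-intersection formula directly in the $G$-theory of $Z^g$ supported on $X^g$, which is the model used in the statement.
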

We still need to compute  $j_*\mathcal{F}\otimes^L \mathcal O_{Z^g}$. To be able to do this we will impose some transversality conditions on the embedding $X^g\hookrightarrow X$. 

Suppose there exists a $G$-equivariant
    closed immersion
    $
    j:X\hookrightarrow Z
    $ into a smooth proper $G$-variety $Z$ such that
after identifying $X$ with its image, $X^g=X\cap Z^g$,
     $X$ and $Z^g$ are Tor-independent in $Z$. Equivalently, the
    following $G$-equivariant Cartesian square is Tor-independent:
    \begin{equation}\label{diag8}
    \begin{tikzcd}
    X^g \arrow{d}{i_X} \arrow{r}{j_g}
    & Z^g \arrow{d}{i_Z} \\
    X \arrow{r}{j}
    & Z
    \end{tikzcd}
    \end{equation}
    that is
    \[
    \operatorname{Tor}^{\mathcal{O}_Z}_k
    (\mathcal{O}_X,\mathcal{O}_{Z^g})=0
    \qquad\text{for all }k>0.
    \]
Since the Cartesian square (\ref{diag8}) is Tor-independent,
derived base change yields a natural isomorphism
$
Li_Z^*Rj_*
\xrightarrow{\cong}
R(j_g)_*Li_X^*.
$
Since $j$ and $j_g$ are closed immersions, their pushforward functors
are exact, and hence
$
Li_Z^*j_*
\xrightarrow{\cong}
(j_g)_*Li_X^*.
$ Passing to $G_0$-theory gives
\[
i_Z^!\circ j_*=(j_g)_*\circ i_X^!:
G^G_0(X)\longrightarrow G^G_0(Z^g)
\]
Indeed, for a regular embedding $i:Y\hookrightarrow X$, the Gysin map
$$
i^!:G^G_0(X)\longrightarrow G^G_0(Y)
$$
is defined by
\[
i^![\mathcal{F}]
=
\sum_q(-1)^q
\left[
\operatorname{Tor}^{\mathcal{O}_X}_q
(\mathcal{O}_Y,\mathcal{F})
\right]
\]
for $\mathcal{F}\in\operatorname{Coh}^G(X)$, hence there is an
isomorphism
\[
i_Z^!\bigl(j_*[\mathcal{F}]\bigr)=[j_*\mathcal{F}
\otimes_{\mathcal{O}_Z}^{L}
\mathcal{O}_{Z^g}]\cong
[(j_g)_*(\mathcal{F}\otimes_{\mathcal{O}_X}^{L}\mathcal{O}_{X^g})]=(j_g)_*\bigl(i_X^![\mathcal{F}]\bigr)
\]
in $G^G_0(Z^g)$.
\begin{corollary}\label{algebriaclocalfomrulasimplfied}
    Let $i:X^g\rightarrow X$. Assume there exists a $G$-equivariant
    closed immersion
    $
    j:X\hookrightarrow Z$ satisfying the above condition, then
     \[L_g:G_0^G(X)_g\rightarrow G_0^G(X^g)_g\]
     \begin{equation}\label{localformmm}
         L_g([\mathcal{F}]_g)=  (\lambda_{-1}N^{\vee}_{X^g/X})_g^{-1}\otimes_{\mathcal{O}_{X^g}}\bigl(i^![\mathcal{F}]\bigr)_g 
     \end{equation}
\end{corollary}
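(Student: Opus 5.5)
The strategy is to start from the general formula of Theorem \ref{coherencetraceformula} and simplify both of its factors using the Tor-independence of the square (\ref{diag8}). All identities are first established in $G_0^G(Z^g)_g$, where $G_0^G(X^g)_g$ is viewed as the $G_0$-theory of $Z^g$ supported on $X^g$. They are then transported back along $(j_g)_*$.

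\textbf{Step 1 (second factor).} Derived base change for the Tor-independent square gives
\[
[j_*\mathcal{F}\otimes^L_{\mathcal{O}_Z}\mathcal{O}_{Z^g}]=(j_g)_*\bigl(i_X^![\mathcal{F}]\bigr),
\]
as explained just before the statement. Here $i_X^![\mathcal{F}]=\sum_q(-1)^q[\operatorname{Tor}_q^{\mathcal{O}_X}(\mathcal{O}_{X^g},\mathcal{F})]$. This sum is finite because the Tor-independence transports a finite locally free resolution of $\mathcal{O}_{Z^g}$ over $\mathcal{O}_Z$ (which exists since $Z$ is smooth) to one of $\mathcal{O}_{X^g}$ over $\mathcal{O}_X$. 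In particular $i_X:X^g\hookrightarrow X$ is a regular embedding whose ideal is generated by the pullback of a local regular sequence defining $Z^g$.

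\textbf{Step 2 (first factor).} The normal factor is where the real work lies. Tor-independence of the Cartesian square implies that the conormal sheaf of $X^g$ in $X$ is the pullback of that of $Z^g$ in $Z$:
\[
N^\vee_{X^g/X}\cong j_g^*N^\vee_{Z^g/Z},
\]
$G$-equivariantly. I would argue this from $\mathcal{I}_{X^g}/\mathcal{I}_{X^g}^2=\mathcal{I}_{Z^g}/\mathcal{I}_{Z^g}^2\otimes\mathcal{O}_{X^g}$. This holds because the ideal of $X^g=X\cap Z^g$ in $\mathcal{O}_X$ is the image of $\mathcal{I}_{Z^g}$. Local freeness follows from the vanishing of $\operatorname{Tor}_1(\mathcal{O}_X,\mathcal{O}_{Z^g})$, which makes the Koszul complex on the pulled-back regular sequence exact. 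It follows that $j_g^*\lambda_{-1}(N^\vee_{Z^g/Z})=\lambda_{-1}(N^\vee_{X^g/X})$ in $K^0_G(X^g)$. The element $\lambda_{-1}(N^\vee_{X^g/X})_g$ is invertible in $K_G^0(X^g)_g$ because its character evaluated at $g$ is $\prod(1-\zeta^{-1})\neq0$ on the nontrivial eigenbundles. Its inverse is then the pullback of the inverse of $\lambda_{-1}(N^\vee_{Z^g/Z})_g$.

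\textbf{Step 3 (assembly).} The projection formula for the closed immersion $j_g$ and the $K^0_G(Z^g)$-module structure on $G_0^G(Z^g)$ give
\[
\lambda_{-1}(N^\vee_{Z^g/Z})_g^{-1}\otimes(j_g)_*\bigl(i_X^![\mathcal{F}]\bigr)_g=(j_g)_*\Bigl(j_g^*\lambda_{-1}(N^\vee_{Z^g/Z})_g^{-1}\otimes i_X^![\mathcal{F}]_g\Bigr).
\]
This equals $(j_g)_*\bigl((\lambda_{-1}N^\vee_{X^g/X})_g^{-1}\otimes i^![\mathcal{F}]_g\bigr)$. Comparing with (\ref{localform}), and using the identification of $G_0^G(X^g)_g$ with classes on $Z^g$ supported on $X^g$, yields (\ref{localformmm}). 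That identification is induced by $(j_g)_*$ and is injective, being an isomorphism onto supported $G$-theory.

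One point I would double-check is that localization at the prime ideal of $g$ commutes with $(j_g)_*$, $j_g^*$ and the module structures. This is routine, since all of these maps are $R(G)$-linear. I expect the main obstacle to be Step 2: deriving local freeness of $N^\vee_{X^g/X}$ and its identification with $j_g^*N^\vee_{Z^g/Z}$ cleanly from Tor-independence alone, since $X$ is singular.
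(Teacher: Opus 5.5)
Your proposal is correct and follows the paper's proof step for step. You apply Theorem \ref{coherencetraceformula}, rewrite $j_*\mathcal{F}\otimes^L_{\mathcal{O}_Z}\mathcal{O}_{Z^g}$ as $(j_g)_*(i_X^![\mathcal{F}])$ by Tor-independent base change, use $j_g^*N^\vee_{Z^g/Z}\cong N^\vee_{X^g/X}$, and conclude by the projection formula; your Koszul-complex argument for the regularity of $X^g\hookrightarrow X$ and the local freeness of $N^\vee_{X^g/X}$ supplies detail the paper only asserts. One point to phrase carefully: the final comparison must take place in the $K$-theory of $Z^g$ with supports in $X^g$, where the map induced by $(j_g)_*$ from $G_0^G(X^g)$ is an isomorphism, and not in $G_0^G(Z^g)$ itself, where $(j_g)_*$ need not be injective.
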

\begin{proof} Let $i_X=i$ as in (\ref{diag8}).
By Theorem \ref{coherencetraceformula}
\[
L_g([\mathcal{F}]_g)
=
\lambda_{-1}
\left(
N_{Z^g/Z}^{\vee}
\right)_g^{-1}
\otimes_{\mathcal{O}_{Z^g}}
\left(
j_*\mathcal{F}
\otimes_{\mathcal{O}_Z}^{L}
\mathcal{O}_{Z^g}
\right)_g
=
\lambda_{-1}
\left(
N_{Z^g/Z}^{\vee}
\right)_g^{-1}
\otimes_{\mathcal{O}_{Z^g}}
(j_g)_*\bigl(i_X^![\mathcal{F}]\bigr)_g 
\]
 The Tor-independence condition implies $(j_g)^*N^{\vee}_{Z^g/ Z}=N^{\vee}_{X^g/X}$. By the projection formula
\[
(\lambda_{-1}N^{\vee}_{Z^g/Z})_g^{-1}
\otimes_{\mathcal{O}_{Z^g}}
(j_g)_*\bigl(i_X^![\mathcal{F}]\bigr)_g
=
(j_g)_*
\left(
(j_g)^*(\lambda_{-1}N^{\vee}_{Z^g/Z})_g^{-1}
\otimes_{\mathcal{O}_{X^g}}
\bigl(i_X^![\mathcal{F}]\bigr)_g
\right)
=
\]
\[
\cong
(j_g)_*
\left(
(\lambda_{-1}N^{\vee}_{X^g/X})_g^{-1}
\otimes_{\mathcal{O}_{X^g}}
\bigl(i_X^![\mathcal{F}]\bigr)_g
\right)
\cong
(\lambda_{-1}N^{\vee}_{X^g/X})_g^{-1}
\otimes_{\mathcal{O}_{X^g}}
\bigl(i_X^![\mathcal{F}]\bigr)_g 
\]
    under the identification of $G_0^G(X^g)$ with elements of $G_0^G(Z^g)$ with support on $X^g$. 
\end{proof}
If we plug $\mathcal{F}=MHC^G_1(IC'^H_X)$ into (\ref{localformmm}), we need to compute  $i_X^!(MHC^G_1(IC'^H_X))$. To be able to do this we will impose a further normal nonsingularity condition on the inclusion of fixed point set.
\subsection{The Gysin class $i^!(MHC^G_1(IC'^H_X))$}
Let $i:X^g\hookrightarrow X$. The goal of this section is to compute  $i^!(MHC^G_1(IC'^H_X))$ under the Gysin map
\[i^!:G^G_0(X)\longrightarrow G^G_0(X^g)\]
by imposing a normal nonsingularity condition.

The notion of an \textit{upwardly normally nonsingular} closed regular
algebraic embedding of complex algebraic varieties was introduced by
M.~Banagl. If $V\hookrightarrow U$ is a closed regular embedding of complex varieties, $\beta:Bl_VU\rightarrow U$ will denote the blow-up of $U$ along $V$. The exceptional divisor $E = \beta^{-1}V$  is the
projectivization of the algebraic normal bundle of $V$ in $U$.

\begin{definition}\cite[Defs. 6.1 and 6.5]{B}\label{upward}
A closed regular algebraic embedding $Y\hookrightarrow X$ is
\textit{upwardly normally nonsingular} if:
\begin{enumerate}
    \item Its underlying topological embedding is normally nonsingular
    and compatibly stratifiable, and its topological normal bundle is
    isomorphic to the underlying topological vector bundle of the
    algebraic normal bundle $N_{Y/X}\rightarrow Y$.
    \item The inclusion $E\subset \operatorname{Bl}_{Y\times 0}(X\times\mathbb{C})$, of the exceptional divisor $E$ of the blowup $X\times\mathbb{C}$ along  $Y\times 0$,  is topologically normally
    nonsingular.
\end{enumerate}
\end{definition}
\begin{theorem} \label{pullbackofmhc1} Let $G=\langle g\rangle$ be a finite cyclic group acting on pure-dimensional projective complex algebraic varieties $X,Y$ by
algebraic automorphisms.
If  $i:Y\hookrightarrow X$ is a $G$-equivariant upwardly normally nonsingular embedding with the algebraic (co)normal bundle $N(N^{\vee})$, then
\[
i^! MHC^G_1(IC'^H_X)=\lambda_1(N^\vee)\otimes MHC^G_1(IC'^H_{Y})
\]
Here $i^!:G^G_0(X)\rightarrow G^G_0(Y)$ is the Gysin map and $K_G^0(Y)\otimes G^G_0(Y)\rightarrow G^G_0(Y)$ is the $K_G^0(Y)$-module structure of $G^G_0(Y)$. $\lambda_1(N^\vee)$ denotes the equivariant $\lambda$-class of the $G$-equivariant conormal bundle.
\end{theorem}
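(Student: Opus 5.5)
The plan is to follow Banagl's argument in \cite{B} for the non-equivariant statement $i^!MHC_1(IC'^H_X)=\lambda_1(N^\vee)\otimes MHC_1(IC'^H_Y)$ and to check that each step is $G$-equivariant.

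The argument uses deformation to the normal cone. Let $W=\operatorname{Bl}_{Y\times 0}(X\times\mathbb{C})$ with the exceptional divisor removed, and let $\rho:W\to\mathbb{C}$ be the projection. Since $G$ acts on $X$ preserving $Y$, and trivially on $\mathbb{C}$, the action lifts canonically to $W$ and the whole construction is $G$-equivariant. The general fiber is $\rho^{-1}(t)\cong X$ for $t\neq 0$, the special fiber is $\rho^{-1}(0)\cong N=N_{Y/X}$, and the algebraic normal bundle $N$ carries its induced $G$-linearization.

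\textbf{Step 1 (reduction to the zero section of $N$).} In $G$-theory, the Gysin map $i^!$ factors as specialization $\sigma:G_0^G(X)\to G_0^G(N)$ (Verdier--Fulton specialization, which is equivariant since all maps are) followed by the inverse of flat pullback $\pi^*:G_0^G(Y)\xrightarrow{\cong}G_0^G(N)$ (equivariant homotopy invariance; equivalently, $s^!$ for the zero section $s$). On the Hodge-module side, Saito's nearby cycles $\psi_\rho$ applied to $IC^H_{X\times\mathbb{C}^*}$ give the specialization of $IC'^H_X$. Conditions (1) and (2) of Definition \ref{upward} are exactly what \cite{B} uses to show that, after the appropriate shift, $\psi_\rho IC^H_{W\setminus\rho^{-1}(0)}\cong IC^H_N\cong \pi^*IC^H_Y[\mathrm{rk}]$. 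Condition (2) makes the special fiber normally nonsingular in $W$, so the $IC$ sheaf of $W$ restricts nicely. Condition (1) identifies the topological and algebraic normal structures, giving the product structure of $IC$ along $N$ (trivial local system on the fibers). Everything here lives in $\MHM^G$, because $G$ acts by automorphisms commuting with $\rho$, and $IC$ and $\psi_\rho$ are functorial for isomorphisms.

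\textbf{Step 2 (compatibility of $MHC^G_y$ with specialization and smooth pullback).} I would invoke the equivariant versions, from \cite{CMSS} (using the weak equivariant derived categories), of two properties of the motivic Chern class. First, the Verdier-type specialization property $MHC_y(\psi_\rho M)=\sigma\,MHC_y(M)$ holds for the nearby cycles along a function; non-equivariantly this is due to Sch\"urmann, and \cite{B} uses it. Second, the Verdier--Riemann--Roch formula for the smooth morphism $\pi:N\to Y$ reads
\[
MHC_y(\pi^*M[\mathrm{rk}])=\lambda_y(T^\vee_\pi)\otimes\pi^*MHC_y(M),\qquad T^\vee_\pi=\pi^*N^\vee,
\]
with the appropriate sign and shift conventions. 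Both are equivariant because the filtered de Rham functors $\gr^F\DR$ are functorial for the $G$-action. Combining them gives
\[
i^!MHC^G_1(IC'^H_X)=(\pi^*)^{-1}\sigma\, MHC^G_1(IC'^H_X)=(\pi^*)^{-1}\bigl(\pi^*\lambda_1(N^\vee)\otimes\pi^*MHC^G_1(IC'^H_Y)\bigr)=\lambda_1(N^\vee)\otimes MHC^G_1(IC'^H_Y).
\]

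The main obstacle is Step 2, specifically the equivariant specialization property for $MHC^G_y$ together with the equivariant identification $\psi_\rho IC\cong IC_N$. My first approach would be to check that Banagl's proof uses only functorial constructions, namely the $V$-filtration, $\gr^F\DR$, and the topological normal nonsingularity statements. Each of these is canonical, so the $G$-linearizations transport through the proof verbatim. For the topological inputs, this requires the tubular data in Definition \ref{upward} to be chosen $G$-invariantly, which I take to be part of the assumption that the embedding is $G$-equivariant upwardly normally nonsingular.
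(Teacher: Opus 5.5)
Your outline follows the paper's proof. You factor $i^!=k^!\circ Sp_G$ through the deformation to the normal cone, obtain $Sp_G(MHC^G_y(IC'^H_X))=MHC^G_y(IC'^H_N)$ from nearby cycles and $\Psi^{H,G}_p(IC^H_{\mathcal{Z}^\circ})\cong IC^H_N$, and finish with equivariant VRR for $\pi:N\to Y$ and $k^!\pi^*=\mathrm{id}$. Two details need correcting. First, $\mathcal{Z}^\circ$ is the blow-up minus the strict transform $\operatorname{Bl}_YX$ of $X\times 0$, not minus the exceptional divisor; removing $E$ would leave special fiber $X\setminus Y$. Second, $\sigma\,MHC_y(M)=MHC_y(\psi_\rho M)$ is not Sch\"urmann's theorem as such. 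It follows from that theorem together with the VRR factor $\lambda_y(T^*_q)=1+y$ for $q:X\times\mathbb{C}^*\to X$ and the definition $Sp_G=\iota^!\circ(j^*)^{-1}\circ q^*$, after cancelling $1+y$ in $G^G_0(N)[y,y^{-1}]$. That cancellation must happen before setting $y=1$, where it becomes multiplication by $2$ on a group that may have torsion.

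The genuine gap is the step you flag yourself. Functoriality of $IC$ and $\psi_\rho$ only shows that $\Psi^{H,G}_p(IC^H_{\mathcal{Z}^\circ})$ and $IC^H_N$ each carry $G$-structures, and likewise $i^*IC^H_X[-c]$ and $IC^H_Y$, and $\pi^*IC^H_Y[c]$ and $IC^H_N$. It does not make the isomorphisms between them $G$-equivariant; Banagl obtains these in Lemmas 6.13, 6.19 and Prop.~6.20 of \cite{B} from stratification-dependent arguments. This is not a formality. For irreducible $Y$, $\operatorname{Aut}(IC^H_N)=\mathbb{Q}^*$, so two $G$-structures on $IC^H_N$ can differ by a character $G\to\{\pm1\}$. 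Such a twist would multiply $MHC^G_1$ by a sign representation and could flip the sign of the localized class at $g$.

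Assuming $G$-invariant tubular data adds a hypothesis the paper deliberately avoids: it does not assume the algebraic stratifications are $G$-invariant. That assumption also does not control the choice of isomorphism, which is what actually needs fixing. The missing idea is rigidity:
\begin{enumerate}
\item Replace the dense smooth opens $U\subset X$ and $V=U\cap Y$ by $U_G=\bigcap_{h\in G}hU$ and $V_G=U_G\cap Y$.
\item Compose $\alpha:i^*IC^H_X[-c]\cong IC^H_Y$ with $j_{V_G!*}(a^{-1})$, so that the new isomorphism $\widetilde\alpha$ is the identity on $\mathbb{Q}^H_{V_G}[d_Y]$.
\item For each $g$, $(g\cdot\widetilde\alpha)\widetilde\alpha^{-1}$ is then an automorphism of $IC^H_Y$ restricting to the identity on $V_G$. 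It is therefore the identity, because $IC^H_Y$ has no nonzero subobject supported on $Y\setminus V_G$.
\end{enumerate}
The same normalization handles the smooth-pullback lemma. Saito's identification of the specialization along $Y$ is a natural transformation between $G$-equivariant functors, and it is an isomorphism equivariantly because the forgetful functor is conservative. Finally, the paper does not simply quote an equivariant specialization theorem for $MHC^G_y$. It proves one in its appendix, using that the Kashiwara--Malgrange $V$-filtration is $G$-stable by uniqueness, which is what your remark about canonicity of the $V$-filtration points toward.
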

The non-equivariant homological version was proved in \cite[Thm. 6.30]{B};
\[
i^! \td_{2*}(MHC_1(IC'^H_X))=L^*(N)\cap \td_{2*}(MHC_1(IC'^H_{Y}))
\]
We follow the argument given there using deformation to normal cone except applying a $G$-equivariant $G_0$-theoretic  description of the Gysin map in terms of  Verdier specialization map instead of a homological one \cite[Lemma 6.29]{B}.
\begin{proof}[Proof of Theorem \ref{pullbackofmhc1}]
Recall from Appendix, Theorem \ref{equiavarintspecialzition}, that
 the Gysin map $i^!:G^G_0(X)\rightarrow G^G_0(Y)$ factors as
\begin{equation}\label{diag4}
\begin{tikzcd}
G^G_0(X)  \arrow{rr}{i^!} \arrow{rd}{Sp_G} && G^G_0(Y)   \\
&G^G_0(N) \arrow{ru}{k^!} &
\end{tikzcd}
\end{equation}
where $k: Y  \hookrightarrow N $ is the inclusion of the zero section, and $Sp_G$ is the $G$-equivariant $G_0$-theoretic Verdier specialization map. Let
\[
p:\mathcal{Z}^{\circ}\longrightarrow \mathbb{C}
\]
be the (equivariant) deformation to the normal bundle (with trivial action on $\mathbb{C}$) associated to the embedding
$i:Y\hookrightarrow X$. Thus,
\[
p^{-1}(0)=N,
\qquad
p^{-1}(\mathbb{C}^{*})\cong X\times\mathbb{C}^{*}
\]
Denote the corresponding inclusions and projection by
\[
\iota:N\hookrightarrow\mathcal{Z}^{\circ},
\qquad
j:X\times\mathbb{C}^{*}\hookrightarrow\mathcal{Z}^{\circ},
\qquad
q:X\times\mathbb{C}^{*}\longrightarrow X
\]
We review the proof of \cite[Thm. 6.30]{B}. We show
\begin{equation}\label{spmhc}
Sp_G (MHC^G_1(IC'^H_X))=MHC^G_1(IC'^H_N)
\end{equation}
\begin{equation}\label{k!mhc}
k^!(MHC^G_y(IC'^H_N))=\lambda_y(N^{\vee})\otimes MHC^G_y(IC'^H_{Y})
\end{equation}
To show $(\ref{k!mhc})$, let $\pi: N \rightarrow Y$ be the projection and
\[
\pi_K^*:G^G_0(Y)\rightarrow G^G_0(N), \qquad \pi^*:K_G^0(Y)\rightarrow K_G^0(N), \qquad k^!:G^G_0(N)\rightarrow G^G_0(Y)
\]
Then by equivariant Sch\"urmann's Verdier-Riemann-Roch, Theorem \ref{equiv_vrr_shurmann}
\[
MHC^G_{y}(\pi_{\mathrm{mhm}}^*([M]))=\lambda_y(T^*_{\pi})\otimes \pi_K^*MHC^G_{y}([M])
\]
with $T^*_{\pi}$ the equivariant relative cotangent bundle. Using $\pi_{\mathrm{mhm}}^*(IC'^H_Y)=IC'^H_N$ \cite[Lemma 6.13]{B} we get
\[
MHC^G_{y}(IC'^H_N)=\lambda_y(T^*_{\pi})\otimes \pi_K^*MHC^G_{y}(IC'^H_Y)
\]
then $T^*_{\pi}=\pi^*(N^\vee)$
\[
k^!(MHC^G_{y}(IC'^H_N))=k^!(\lambda_y(\pi^*(N^\vee))\otimes \pi_K^*MHC^G_{y}(IC'^H_Y))
\]
\[
=k^!(\pi^*\lambda_y(N^\vee)\otimes \pi_K^*MHC^G_{y}(IC'^H_Y))=k^!\pi_K^*(\lambda_y(N^\vee)\otimes MHC^G_{y}(IC'^H_Y))=\lambda_y(N^\vee)\otimes MHC^G_{y}(IC'^H_Y)
\]
since $k^!\circ \pi_K^*=id_{G^G_0(Y)}$.

We prove a slightly stronger version of (\ref{spmhc}), namely
\begin{equation}\label{spmhcy}
Sp_G\bigl(MHC^G_y(IC'^H_X)\bigr)=MHC^G_y(IC'^H_N)
\end{equation}
Since restriction to an open subset commutes with $MHC_y$, we have
\[
j^*MHC^G_y(IC_{\mathcal{Z}^\circ}^{\prime H})
=
MHC^G_y(IC_{X\times\mathbb{C}^*}^{\prime H})
\]
\[
q_{\mathrm{mhm}}^*(IC_X^{\prime H})
=
IC_{X\times\mathbb{C}^*}^{\prime H}
\]
Again,  by equivariant Sch\"urmann's Verdier-Riemann-Roch, Theorem \ref{equiv_vrr_shurmann}, for the smooth morphism $q$, we get
\[
MHC^G_y(IC_{X\times\mathbb{C}^*}^{\prime H})
=
\lambda_y(T_q^*)\otimes q^*MHC^G_y(IC_X^{\prime H})
\]
The equivariant relative cotangent bundle $T_q^*$ is the trivial line bundle with trivial $G$-representation, hence
\[
\lambda_y(T_q^*)=1+y
\]
Consequently,
\[
j^*MHC^G_y(IC_{\mathcal{Z}^\circ}^{\prime H})
=
(1+y)q^*MHC^G_y(IC_X^{\prime H})
\]
By the definition of $Sp_G$, (\ref{sp_g_description}), this implies
\begin{equation}\label{spg1+y}
(1+y)Sp_G\bigl(MHC^G_y(IC_X^{\prime H})\bigr)
=
\iota^!MHC^G_y(IC_{\mathcal{Z}^\circ}^{\prime H})
\end{equation}
 Equivariant Sch\"urmann's specialization theorem, Theorem \ref{equiv_shurman_specialziation}, gives
\[
\iota^!MHC^G_y(IC_{\mathcal{Z}^\circ}^{\prime H})
=
(1+y)MHC^G_y\bigl(\Psi_p^{\prime H,G}(IC_{\mathcal{Z}^\circ}^{\prime H})\bigr)
\]
where
\[
\Psi_p^{\prime H,G}:=\Psi_p^{H,G}[1]
\]
is the shifted Hodge nearby-cycle functor associated to $p$. Let $n=\dim_{\mathbb{C}}X$. Then
\[
IC_{\mathcal{Z}^\circ}^{\prime H}
=
IC_{\mathcal{Z}^\circ}^H[-n-1]
\]
Since $i$ is G-equivariant upwardly normally  nonsingular, by an equivariant version of  
\cite[Prop. 6.20]{B} mentioned in Theorem \ref{banagl_ic_nomralcone_z} we have
\[
\Psi_p^{H,G}(IC_{\mathcal{Z}^\circ}^H)\cong IC_N^H
\]

It follows that
\[
\Psi_p^{\prime H,G}(IC_{\mathcal{Z}^\circ}^{\prime H})
=
\Psi_p^{H,G}[1](IC_{\mathcal{Z}^\circ}^H[-n-1])
=
\Psi_p^{H,G}(IC_{\mathcal{Z}^\circ}^H)[-n]
=
IC_N^H[-n]
=
IC_N^{\prime H}
\]
Thus,
\[
\iota^!MHC^G_y(IC_{\mathcal{Z}^\circ}^{\prime H})
=
(1+y)MHC^G_y(IC_N^{\prime H})
\]
Combining this with (\ref{spg1+y}), we obtain
\[
(1+y)Sp_G\bigl(MHC^G_y(IC_X^{\prime H})\bigr)
=
(1+y)MHC^G_y(IC_N^{\prime H})
\]
Multiplication by $1+y$ is injective on
$G^G_0(N)[y,y^{-1}]$, so
\[
Sp_G\bigl(MHC^G_y(IC_X^{\prime H})\bigr)
=
MHC^G_y(IC_N^{\prime H})
\]
\end{proof}
\subsection{\textit{G-nicely} normally nonsingular and algebraic localized class $L_g(MHC^{G}_1(IC'^H_X)_g)$}
In this section, we collect all normally nonsingular type conditions needed on the inclusion of a fixed point set $X^g \hookrightarrow X$ in order to compute and compare the  analytic and algebraic localized  classes. 
\begin{definition}\label{nice}
Let $X$ be a projective complex algebraic variety, and let
$G=\langle g\rangle$ be a finite cyclic group acting on $X$ by
algebraic automorphisms. We say that the inclusion of the fixed-point
set $X^g\hookrightarrow X$ is  \textit{G-nicely} normally nonsingular if:
\begin{enumerate}
    \item The inclusion $i:X^g\hookrightarrow X$ is an upwardly normally
    nonsingular embedding, Definition \ref{upward}.
    \item The inclusion $i:X^g\hookrightarrow X$ is $G$-equivariantly
    strongly normally nonsingular, Definition \ref{BLP_normallynonsing}.
    \item There exists a smooth proper $G$-variety $Z$ and a $G$-equivariant
    closed immersion
    \[
    j:X\hookrightarrow Z
    \]
    such that, after identifying $X$ with its image, $X^g=X\cap Z^g$,
    and $X$ and $Z^g$ are Tor-independent in $Z$. Equivalently, the
    following $G$-equivariant Cartesian square is Tor-independent:
    \begin{equation}\label{diag9}
    \begin{tikzcd}
    X^g \arrow{d}{i_X} \arrow{r}{j_g}
    & Z^g \arrow{d}{i_Z} \\
    X \arrow{r}{j}
    & Z
    \end{tikzcd}
    \end{equation}
    that is
    \[
    \operatorname{Tor}^{\mathcal{O}_Z}_k
    (\mathcal{O}_X,\mathcal{O}_{Z^g})=0
    \qquad\text{for all }k>0.
    \]
\end{enumerate}
\end{definition}

\begin{example}
Assume that there exists a smooth proper $G$-variety $Z$, together with a $G$-equivariant closed
immersion $j:X\hookrightarrow Z$, such that $X$ and $Z^g$ are
Tor-independent in $Z$. Assume also that $X$ admits a $G$-invariant
algebraic Whitney stratification whose strata are transverse to $Z^g$
in $Z$. Let $$\beta:\operatorname{Bl}_{Z^g\times 0}(Z\times\mathbb{C})\rightarrow Z\times\mathbb{C}$$ be the blowup. Assume that the strict transform $\beta^{-1}(X\times \mathbb{C})$
admits a possibly non-algebraic Whitney stratification transverse to
the exceptional divisor $\mathbb{P}(N_{Z^g\times 0/Z\times \mathbb{C}})\hookrightarrow \operatorname{Bl}_{Z^g\times 0}(Z\times\mathbb{C})$. Then, the embeddings
$
X\hookrightarrow Z\hookleftarrow Z^g
$
are upwardly transverse in the sense of \cite[Def. 6.4]{B}.
Consequently, $X^g\hookrightarrow X$ is upwardly normally nonsingular
by \cite[Corollary 6.7]{B}.  Moreover, $X^g\hookrightarrow X$ is $G$-equivariantly strongly normally
nonsingular by \cite[Thm. 3.1]{BLP}. Therefore,
$X^g\hookrightarrow X$ is  \textit{G-nicely} normally nonsingular.
\end{example}
\begin{example}
    If $X=Z$ is smooth, $X^g\hookrightarrow X$ is a submanifold and its normal bundle is a $G$-equivariant vector bundle. The blowup $\operatorname{Bl}_{Z^g\times 0}(Z\times\mathbb{C})$
    is smooth and the exceptional divisor is a smooth divisor smoothly embedded. As explained in \cite[Ex.~6.2]{B} the smooth tubular neighborhood theorem  provides topological normal nonsingularity. Moreover by \cite[Thm.~3.1]{BLP} it's $G$-strongly normally nonsingular,  thus, it is  \textit{G-nicely} normally nonsingular. 
\end{example}
We will compute the algebraic localized Hirzebruch class $\td_{2*}(MHC^G_{1}(IC'^H_X),g)$ under the  assumption that the inclusion of fixed point set $i:X^g\hookrightarrow X$ is \textit{G-nicely} normally nonsingular. Note that the action of $G=\langle g\rangle$ on the fixed point set $X^g$ is trivial, and $IC'^H_{X^g}$ is given its trivial equivariant structure. Corollary  \ref{algebriaclocalfomrulasimplfied} and Theorem \ref{pullbackofmhc1} imply
\begin{corollary}\label{nice_alg_localized_formula}
Let $G=\langle g\rangle$ be a finite cyclic group acting  on  a  pure-dimensional complex projective variety $X$ by
algebraic automorphisms. If  the inclusion of  fixed point set $i_X:X^g\hookrightarrow X$ is  \textit{G-nicely} normally nonsingular, then
\[
\begin{aligned}
L_g(MHC^{G}_1(IC'^H_X)_g) &= (\lambda_{-1}N^{\vee}_{X^g/ X})_g^{-1} \otimes i^!_X MHC^{G}_1(IC'^H_X)_g \\
&= (\lambda_1N^{\vee}_{X^g/ X})_g\otimes(\lambda_{-1}N^{\vee}_{X^g/X})_g^{-1} \otimes MHC^{G}_1(IC'^H_{X^g})_g
\end{aligned}
\]
   \end{corollary}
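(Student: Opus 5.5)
The plan is to combine the two earlier computations: the algebraic localization formula of Corollary \ref{algebriaclocalfomrulasimplfied} and the Gysin computation of Theorem \ref{pullbackofmhc1}. Then I would check that the remaining identifications respect the localization at the prime ideal of $g$.

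\textbf{First equality.} Condition (3) of Definition \ref{nice} is exactly the Tor-independence hypothesis of Corollary \ref{algebriaclocalfomrulasimplfied}. Apply that corollary to $\mathcal{F}=MHC^G_1(IC'^H_X)\in G^G_0(X)$. Since (\ref{localformmm}) is additive in $[\mathcal{F}]$, it extends from coherent sheaves to virtual classes. This gives
\[
L_g(MHC^G_1(IC'^H_X)_g)=(\lambda_{-1}N^{\vee}_{X^g/X})_g^{-1}\otimes (i_X^!MHC^G_1(IC'^H_X))_g .
\]
Here $N^\vee_{X^g/X}$ denotes the algebraic conormal bundle of the regular embedding $i_X$. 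It is identified with $(j_g)^*N^\vee_{Z^g/Z}$ via Tor-independence, exactly as in the proof of that corollary.

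We also need $(\lambda_{-1}N^\vee_{X^g/X})_g$ to be invertible in $K^0_G(X^g)_g$. The group $G$ acts trivially on $X^g$, so $K^0_G(X^g)\cong K^0(X^g)\otimes R(G)$. The bundle $N^\vee$ splits into eigenbundles with eigenvalues different from $1$, so its evaluation at $g$ is $\prod(1-\zeta_j)^{\mathrm{rk}}$ plus nilpotent terms. This is nonzero, and the class is therefore a unit after localizing. This is the standard argument of \cite{BFQ, Ta}.

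\textbf{Second equality.} Condition (1) says $i_X$ is upwardly normally nonsingular, and it is $G$-equivariant because $X^g$ is $G$-stable. So Theorem \ref{pullbackofmhc1} applies with $Y=X^g$ and gives
\[
i_X^!MHC^G_1(IC'^H_X)=\lambda_1(N^\vee_{X^g/X})\otimes MHC^G_1(IC'^H_{X^g}).
\]
Here $IC'^H_{X^g}$ carries its trivial equivariant structure. I would verify that this is the equivariant structure produced in the proof via $\pi^*_{\mathrm{mhm}}IC'^H_Y=IC'^H_N$ and the equivariant version of \cite[Prop. 6.20]{B}.

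Localization at $g$ is exact and compatible with the $K^0_G(X^g)$-module structure on $G^G_0(X^g)$. Localizing both sides and substituting into the first equality therefore yields the stated formula. Because $K^0_G(X^g)_g$ is commutative, the order of the tensor factors does not matter.

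\textbf{Main obstacle.} The only substantive point is whether the two normal bundles coincide. The conormal bundle in the algebraic localization formula comes from the regular embedding determined by the Tor-independent square. The conormal bundle in Theorem \ref{pullbackofmhc1} is the one whose topological realization matches the normally nonsingular data. Both are the algebraic conormal bundle of the regular embedding $X^g\hookrightarrow X$, with its natural $G$-linearization, so they agree. I expect this check, together with confirming that the $G$-action on $N^\vee$ is the same in both places, to be the only real verification. Everything else is formal bookkeeping.
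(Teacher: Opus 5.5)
Your proposal is correct and matches the paper's proof, which simply combines Corollary \ref{algebriaclocalfomrulasimplfied} (via condition (3) of Definition \ref{nice}) with Theorem \ref{pullbackofmhc1} (via condition (1)) and then localizes at $g$. The paper treats your additional checks as already built into the cited results and its surrounding remarks, so they are harmless but not strictly needed: these are the invertibility of $(\lambda_{-1}N^\vee_{X^g/X})_g$, the identification $N^\vee_{X^g/X}\cong (j_g)^*N^\vee_{Z^g/Z}$ from Tor-independence, and the trivial equivariant structure on $IC'^H_{X^g}$.
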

\begin{corollary}\label{tdgformula} Let  $G$ be a finite group acting on  a pure-dimensional  complex projective  algebraic variety  $X$ by
algebraic automorphisms. Let $G'=\langle g\rangle$.  Assume the inclusion of  fixed point set $X^g\subset X$  is  \textit{$G'$-nicely} normally nonsingular (Definition \ref{nice}). Then
   \[
\begin{aligned}
\td_{2*}(MHC^{G}_1(IC'^H_X),g) &= (\td_{2*}\otimes \chi_g)(L_{{g}}(MHC^{G}_1(IC'^H_X)_g)) \\
&= (\td_{2*}\otimes \chi_g)\left((\lambda_1N^{\vee}_{X^g/ X})_g\otimes(\lambda_{-1}N^{\vee}_{X^g/X})_g^{-1} \otimes MHC^{G}_1(IC'^H_{X^g})_g\right) \\
&= (ch_{(2),g}(\lambda_1(N^{\vee}_{X^g/ X})_g)\cup ch_{(2),g}(\lambda_{-1}N^{\vee}_{X^g/ X})_g^{-1}) \cap (\td_{2*}\otimes \chi_g) MHC^{G}_1(IC'^H_{X^g})_g \\
&= (ch_{(2),g}(\lambda_1N^{\vee}_{X^g/X})_g\cup ch_{(2),g}(\lambda_{-1}N^{\vee}_{X^g/ X})_g^{-1}) \cap IT_{1*}(X^g)
\end{aligned}
\]
\end{corollary}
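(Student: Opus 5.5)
The plan is to verify the four equalities in the displayed chain one at a time. Each is either a definition, an earlier corollary, a multiplicativity property of the Lefschetz--Riemann--Roch transformation, or a known identity for the trivially acting fixed point set.

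\textbf{Step 1.} The first equality is Definition \ref{alglocaliationcharacclass}, composed with $\Psi_2$. To spell it out: $\td_{2*}(-,g)$ first applies $res^G_{G'}$, then localizes at $g$, then applies $L_g$, then applies $\td_{2*}\otimes\chi_g$. Two points need recording here. Restriction from $G$ to $G'$ commutes with $MHC$, since the equivariant motivic Chern class of \cite{CMSS} is natural under restriction of the group. Also, $IC'^H_X$ restricts to the $G'$-equivariant intersection module. We may therefore work with $G'$ throughout.

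\textbf{Step 2.} The second equality comes from applying Corollary \ref{nice_alg_localized_formula} to the $G'$-action. Its hypothesis is exactly $G'$-nice normal nonsingularity. This step requires no new input.

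\textbf{Step 3.} The third equality is the module (cap-product) compatibility of $\td_*$. Since $G'$ acts trivially on $X^g$, we have $K^0_{G'}(X^g)_g\cong K^0(X^g)\otimes R(G')_g$ and $G_0^{G'}(X^g)_g\cong G_0(X^g)\otimes R(G')_g$. Under these identifications the module structure $K^0\otimes G_0\to G_0$ is compatible with the tensor structure on the $R(G')_g$ factor, and $\chi_g$ is a ring homomorphism. The Baum--Fulton--MacPherson module property gives $\td_*(V\otimes\mathcal F)=ch(V)\cap\td_*(\mathcal F)$. Combining these yields
\[(\td_*\otimes\chi_g)(V\otimes \mathcal F)=ch_g(V)\cap(\td_*\otimes\chi_g)(\mathcal F).\]
Applying $\Psi_2$ then uses the standard identity $\Psi_2(a\cap b)=\Psi^2(a)\cap\Psi_2(b)$. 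This identity holds because of the degree conventions, under which cohomological degree $2k$ capped into homological degree $2l$ lands in degree $2(l-k)$, so the factors $2^{k}\cdot 2^{-l}=2^{-(l-k)}$ agree. This produces $ch_{(2),g}$.

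For the inverse factor, $\lambda_{-1}N^\vee$ is invertible after localization at $g$. Its image under $ch_{(2),g}$ is then invertible in $H^*(X^g,\mathbb C)$, being $\prod(1-\zeta^{-1}e^{-x})$ with $\zeta\neq1$ on the normal eigenbundles. Because $ch_{(2),g}$ is a ring map, the image of the inverse is the inverse of the image. The main technical point is this bookkeeping: the eigenvalues $\zeta$ of $g$ on $N$ are all $\neq 1$, so the degree-zero part is nonzero.

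\textbf{Step 4.} The last equality states $(\td_{2*}\otimes\chi_g)(MHC^{G'}_1(IC'^H_{X^g})_g)=IT_{1*}(X^g)$. This holds because $IC'^H_{X^g}$ carries the trivial equivariant structure. Its equivariant motivic class therefore equals $MHC_1(IC'^H_{X^g})\otimes 1_{R(G')}$, as follows from the construction in \cite{CMSS}, where the trivial action makes the weak equivariant structure trivial. Applying $\chi_g(1)=1$ and the definition of $IT_{1*}$ gives the equality.

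The main obstacle I anticipate is the compatibility of $MHC^G$ with trivial actions and with restriction of groups. I would cite \cite[Remark 2.4]{CMSS} and the functoriality recorded in the appendix for this rather than reprove it.
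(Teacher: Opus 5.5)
Your proposal is correct and follows the same route the paper takes. The paper gives no separate proof; it reads the corollary off from Definition \ref{alglocaliationcharacclass} and Corollary \ref{nice_alg_localized_formula} (applied to $G'=\langle g\rangle$), the module property $\td_*(V\otimes\mathcal F)=ch(V)\cap\td_*(\mathcal F)$ with $\Psi_2(a\cap b)=\Psi^2(a)\cap\Psi_2(b)$, and the trivial equivariant structure on $IC'^H_{X^g}$. Your extra bookkeeping fills in details the paper leaves implicit: the restriction from $G$ to $G'$, the ring-map property of $\chi_g$, and the invertibility of $ch_{(2),g}(\lambda_{-1}N^\vee)$, which holds because all normal eigenvalues differ from $1$.
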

\section{Orbit space $X/G$}\label{orbitspace}
Let $G$ be a finite group. Note that if $G$ acts algebraically on a quasi-projective complex algebraic variety,  then $X/G$ exists and is also a quasi-projective complex algebraic variety. If $X$ is an oriented closed Witt pseudomanifold upon which $G$ acts simplicially, preserving the orientation, then $X/G$ is a Witt space \cite{B26}.  The theme of this section is expressing various algebraic and analytic characteristic classes of the quotient space $X/G$ as an average of localized classes on the fixed point sets. The idea behind this is a linear algebra fact \cite[p.~21]{HZ}: If $\rho:G \rightarrow GL(V)$ is a representation of a finite group $G$, then
\begin{equation}\label{linearalgebra}
    \dim(V^G)=\frac{1}{|G|}\sum_{g\in G} \chi(g)
\end{equation}
where $V^G$ denotes the subspace of invariant vectors and $\chi(g)=\operatorname{tr}(\rho(g))$.

\subsection{Analytic Lefschetz-Riemann-Roch }
Theorem \ref{averagetoddalgebraic} which says
\[IT_{1*}(X/G)=\frac{1}{|G|}\sum_{g\in G} \pi_*^gIT_{1*}(X,g)\]
where $\pi^g:X^g\xrightarrow{i} X\xrightarrow{\pi}X/G$, follows from applying the following Lefschetz-Riemann-Roch theorem for Todd class transformation to  $MHC^{G}_1(IC'^H_X) \in G_0^G(X)$.
\begin{theorem}\cite[formula $(90)$]{BFQ,M,CMSS}\label{algerabic_LRR_todd} Let $G$ be a finite group acting by algebraic automorphisms on the complex
projective variety $X$. Let $\mathcal{F}\in G_0^G(X)=K_0(Coh^G(X))$, 
     \begin{equation}
     \td_{*}((\pi_*\mathcal{F})^G)=\frac{1}{|G|}\sum_{g\in G} \pi_*^g(\td_{*}(\mathcal{F},g))
 \end{equation}
 where $(-)^G$ denotes the projector onto the $G$-invariant sub-object.
\end{theorem}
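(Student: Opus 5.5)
The strategy is to reduce, via the Baum--Fulton--MacPherson/equivariant machinery, to a statement about equivariant $G_0$-theory. The target is the projection formula for $(\pi_*\mathcal{F})^G$ combined with the localization isomorphism $L_g$ and the Lefschetz--Riemann--Roch transformation of \cite{BFQ}. In this formulation $\td_*(\mathcal{F},g)=\td_*(g)(\mathcal{F})$.

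\textbf{Step 1: reduce to the trivial action.} Write $\pi:X\to X/G$, which is finite, and note that $G$ acts trivially on $X/G$. For $\mathcal{E}\in G_0^G(X/G)$ with trivial action on the base, the invariant part is computed by the averaging projector $e=\frac{1}{|G|}\sum_{g} g$. Hence
\[
\td_*(\mathcal{E}^G)=\frac{1}{|G|}\sum_{g\in G}\td_*(\mathcal{E},g)_{X/G},
\]
where for the trivial action $\td_*(\mathcal{E},g)=(\td_*\otimes\chi_g)(\mathcal{E})$, using $G_0^{G'}(X/G)\cong G_0(X/G)\otimes R(G')$. This is exactly the linear algebra fact (\ref{linearalgebra}) applied fibrewise in $G_0(X/G)\otimes R(G)$ via the decomposition into isotypic components. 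Concretely, $\mathcal{E}=\sum_\rho \mathcal{E}_\rho\otimes\rho$ and $\frac{1}{|G|}\sum_g\chi_\rho(g)=\delta_{\rho,1}$.

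\textbf{Step 2: covariance of localized Todd classes under proper equivariant maps.} We need
\[
\td_*(\pi_*\mathcal{F},g)=\pi^g_*\,\td_*(\mathcal{F},g)
\]
in $H_*((X/G)^g)=H_*(X/G)$. Since $\pi\circ i=\pi^g$ factors through $i:X^g\to X$, the map $\pi$ sends $X^g$ into $(X/G)^g=X/G$, and $\pi_*$ commutes with restriction to $G'=\langle g\rangle$ and with localization at $g$. Functoriality of $i_*$ then gives $\pi_*\circ i^{alg}_*=(\pi|_{X^g})_*$ on $G_0^{G'}(X^g)_g$, so $L_g^{X/G}\circ\pi_*=(\pi|_{X^g})_*\circ L_g^X$. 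Finally, on varieties with trivial $G'$-action, $\td_*\otimes\chi_g$ commutes with proper pushforward because $\td_*$ is covariant by \cite{BFM}. Combining Steps 1 and 2 with $\mathcal{E}=\pi_*\mathcal{F}$ yields the formula.

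\textbf{Main obstacle.} The main obstacle is Step 1's identification of the invariant projector $(-)^G$ on $G_0^G(X/G)$ with the averaged characters. One must check that taking invariants is exact in characteristic zero, so that it is defined on Grothendieck groups, and that it agrees with $e$ under $G_0^G(Y)\cong G_0(Y)\otimes R(G)$ for trivially acted $Y$. Two facts are needed: an equivariant coherent sheaf with trivial action splits canonically into isotypic pieces $\mathcal{H}om_G(\rho,\mathcal{E})\otimes\rho$, and the restriction to $G'$ followed by $\chi_g$ is computed via the characters of $G$. With these, the averaging identity is formal. Everything else is naturality of localization, as already used in Theorem \ref{compatibilityofloclaitions}.
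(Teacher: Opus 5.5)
Your proposal is correct and takes essentially the same route as the paper. The paper only cites this statement from \cite{BFQ,M,CMSS}, but its own proof of the analytic analogue, Theorem \ref{LRRan}, is your argument: average the characters on the trivially acted quotient $X/G$ via $G_0^G(X/G)\cong G_0(X/G)\otimes R(G)$, then use $L^{X/G}_g\circ\pi_*=\pi^g_*\circ L^X_g$ (which follows from $L_g=(i_*)^{-1}$ and $L^{X/G}_g=\mathrm{id}$) together with covariance of $\td_*$, all of which is formal given Definition \ref{alglocaliationcharacclass}.
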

It is natural to ask if the analogue holds in $K^{G,an}_0(X)$.
\begin{theorem}\label{LRRan}
Let $G$ be a finite group, and let $X$ be a compact $G$-space having the $G$-homotopy type of a finite $G$-$CW$ complex.  For any $M\in K_0^{G,an}(X)$,
\begin{equation}
    Ch((\pi_*M)^G)
=
\frac{1}{|G|}
\sum_{g\in G}
\pi_*^g
\bigl(
Ch(M,g)
\bigr)
\end{equation}
where $\pi^g:X^g\xrightarrow{i} X\xrightarrow{\pi}X/G$.
\end{theorem}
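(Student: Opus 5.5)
Since $G$ acts trivially on $X/G$, I would first make sense of $(\pi_*M)^G$ as the invariant-projector image of $\pi_*M\in K_0^{G,an}(X/G)\cong K_0^{an}(X/G)\otimes R(G)$. Then $Ch((\pi_*M)^G)=(Ch\otimes \chi_{inv})(\pi_*M)$, where $\chi_{inv}(\rho)=\dim \rho^G=\frac{1}{|G|}\sum_g\chi_\rho(g)$ by the linear algebra fact (\ref{linearalgebra}). This already gives
\[
Ch((\pi_*M)^G)=\frac{1}{|G|}\sum_{g\in G}(Ch\otimes\chi_g)(\pi_*M).
\]
It then remains to show, for each fixed $g$ and $G'=\langle g\rangle$, that
\[
(Ch\otimes\chi_g)(\mathrm{res}^G_{G'}\pi_*M)=\pi^g_*\bigl(Ch(M,g)\bigr).
\]
To do this I would restrict to $G'$ and note that evaluation at $g$ factors through the localization $R(G')\to R(G')_g$. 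Hence the left side equals $(Ch\otimes\chi_g)((\pi_*M)_g)$ computed in $K_0^{G',an}(X/G)_g$.

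Next I would write $M_g=(i^{an})_*\gamma_g(M_g)$ using Theorem \ref{inverseoflocalization}, where $(i^{an})_*$ is an isomorphism after localization for cyclic $G'$ (Remark \ref{cwlocalizationblp}). By functoriality of equivariant $KK$-pushforward, which is compatible with localization as an $R(G')$-module map, this gives $(\pi_*M)_g=(\pi\circ i)_*\gamma_g(M_g)=\pi^g_*\gamma_g(M_g)$. Here $\pi^g:X^g\to X/G$ is $G'$-equivariant, and $G'$ acts trivially on both source and target. On spaces with trivial $G'$-action we have $K_0^{G',an}(-)\cong K_0^{an}(-)\otimes R(G')$ naturally, and under this identification $\pi^g_*$ becomes $\pi^g_*\otimes 1$. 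Naturality of the non-equivariant Chern character $Ch=ch^{geo}\circ\mu^{-1}$ under proper pushforward (via diagram (\ref{compatiblitcherntodd2}) and the naturality of $ch^{top}$) then yields $(Ch\otimes\chi_g)(\pi^g_*\otimes 1)=\pi^g_*\circ(Ch\otimes\chi_g)$. Finally $Ch(M,g)=(Ch\otimes\chi_g)\gamma_g(M_g)$ by Definition \ref{localizedchernclass}, and summing over $g$ proves the formula.

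I expect the main obstacle to be the first step, namely justifying that on the trivial-action space $X/G$ the invariant part $(\pi_*M)^G$ is a well-defined class whose Chern character is computed by $\chi_{inv}$. This requires the Künneth-type identification $K_0^{G,an}(Y)\cong K_0^{an}(Y)\otimes R(G)$ for trivial $G$-actions, which holds since $C(Y)\rtimes G\cong C(Y)\otimes C^*G$ and $C^*G$ is a finite sum of matrix algebras. I would also need the projector $\frac{1}{|G|}\sum_g g$ to act on the $R(G)$ factor. I would take this as the definition of $(-)^G$ and check compatibility with the analogous algebraic projector used in Theorem \ref{algerabic_LRR_todd}. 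A secondary point is the compatibility of $\mathrm{res}^G_{G'}$ with $\pi_*$ and with the invariant computation, which is routine naturality.
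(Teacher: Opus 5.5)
Your proposal is correct and follows essentially the same route as the paper: the averaging identity $Ch(A^G)=\frac{1}{|G|}\sum_{g}(Ch\otimes\chi_g)(A)$ on $K_0^{an}(X/G)\otimes R(G)$, factoring $\chi_g$ through the localization of $R(G')$ at $g$ for $G'=\langle g\rangle$, and the identification $(\pi_*M)_g=\pi^g_*\gamma_g(M_g)$ via the localization isomorphism. You also make explicit two points the paper leaves implicit: the trivial-action identification $K_0^{G,an}(Y)\cong K_0^{an}(Y)\otimes R(G)$, and the naturality of $Ch$ under the pushforward $\pi^g_*$.
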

\begin{proof} Let $Y:=X/G$.
Recall 
\[Ch(M,g)=Ch_g\circ\gamma_g(M_g)\]
Since $G$ acts trivially on $Y$, there is a natural identification
$
K_0^{G,an}(Y)
\cong
K_0^{an}(Y)\otimes R(G)
$.
For $A=\alpha\otimes\rho\in K_0^{an}(Y)\otimes R(G)$, we have
\[
A^G
=
\alpha\dim(\rho^G)
=
\frac{1}{|G|}
\sum_{g\in G}\alpha\chi_g(\rho)
\]
Applying the Chern character gives
\[
Ch(A^G)
=
\frac{1}{|G|}
\sum_{g\in G}
(Ch\otimes\chi_g)(A)
\]
By additivity, this identity holds for every $A\in K_0^{G,an}(Y)$.  Let $G'=\langle g\rangle$. Since $\chi_g$ factors through the localization $R(G')_g$, the map $Ch\otimes\chi_g$ factors through $K_0^{G',an}(Y)_g$. Applying the preceding identity to $A=\pi_*M$, we obtain
\[
Ch((\pi_*M)^G)
=
\frac{1}{|G|}
\sum_{g\in G}
(Ch\otimes\chi_g)((\pi_*M)_g)
\]
It remains to identify $(\pi_*M)_g$ with $\pi_*^g(\gamma_g^X(M_g))$. Restriction of the group action commutes with proper pushforward; $res^G_{G'}(\pi_*M)
=
\pi_*(res^G_{G'}M)$, therefore, $(\pi_*M)_g
=
\pi_*(M_g)
\in K_0^{G',an}(Y)_g$. Let $i_g:X^g\hookrightarrow X$ be the inclusion. By the localization theorem,
$
\gamma_g^X$
is the inverse of $(i_g)_*$. Since $G'$ acts trivially on $Y$, we have $Y^g=Y$,  $\gamma_g^Y=id$, and $\pi_*\circ(i_g)_*
=
\pi_*^g,$. The diagram
\begin{equation}\label{diag5}
\begin{tikzcd}
K_0^{G',an}(X)_g
\arrow[r,"\gamma_g^X"]
\arrow[d,"\pi_*"]
&
K_0^{G',an}(X^g)_g
\arrow[d,"\pi_*^g"]
\\
K_0^{G',an}(Y)_g
\arrow[r,"\gamma_g^Y"]
&
K_0^{G',an}(Y)_g
\end{tikzcd}
\end{equation}
commutes. Consequently,
$
(\pi_*M)_g
=
\pi_*^g(\gamma_g^X(M_g)).
$
\end{proof}
\subsection{Identification of analytic and algebraic localized classes}
So far we have obtained two homological classes in $H_*(X^g,\mathbb{C})$; an analytic localized class $Ch(\left[D_X^{sign,G}\right],g)$ and an algebraic localized  class $\td_{2*}(MHC^G_1(IC'^H_X),g)$. If we assume  Conjecture \ref{G-K-CONJECTURE} to hold for the $G$-variety $X$, then by the  compatibility of localizations we automatically get the identification of localized classes.
    \begin{theorem}\label{identificationGKcond}
     Let $G$ be a finite group acting by algebraic automorphisms on a   pure-dimensional complex projective variety $X$. Assume the equivariant $K$-theoretical Hodge index conjecture \ref{G-K-CONJECTURE} holds for $X$, that is 
\begin{equation}\label{ghkconjeceq}
    (\lambda_c^G \circ  \alpha^G)([MHC^G_{1}(IC'^H_X)])=\left[D^{sign,G}_X\right]
\end{equation}
     then 
        \begin{equation}
    IT_{1*}(X,g):=\td_{2*}(MHC^G_1(IC'^H_X),g)=\Psi_2Ch(\left[D_X^{sign,G}\right],g)=:\Psi_2L^{an}_*(X,g)
    \end{equation}
\end{theorem}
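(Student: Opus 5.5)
The plan is to chase the class $\mathcal{F}:=MHC^G_1(IC'^H_X)\in G^G_0(X)$ through the two rows of the commutative diagrams already established, namely (\ref{diag3}) for restriction to $G'=\langle g\rangle$ and (\ref{diag2}) for localization and Chern characters. Everything is formal once these are combined with hypothesis (\ref{ghkconjeceq}).

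The first step is restriction. By (\ref{diag3}) we have
\[
res^G_{G'}\bigl((\lambda^G_c\circ\alpha^G)(\mathcal{F})\bigr)=(\lambda^{G'}_c\circ\alpha^{G'})(res^G_{G'}\mathcal{F}).
\]
By (\ref{ghkconjeceq}) the left side equals $res^G_{G'}[D^{sign,G}_X]$. Passing to the localization at the prime ideal of $g$ is a functorial operation on $R(G')$-modules. The map $\lambda^{G'}_c\circ\alpha^{G'}$ is $R(G')$-linear, being natural and compatible with the module structures. Hence
\[
(\lambda^{G'}_c\circ\alpha^{G'})_g(\mathcal{F}_g)=[D^{sign,G}_X]_g .
\]

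The second step applies Theorem \ref{compatibilityofloclaitions} to the $G'$-action. The second-row squares give
\[
\gamma_g\bigl([D^{sign,G}_X]_g\bigr)=\gamma_g\bigl((\lambda^{G'}_c)_g\alpha^{G'}_g(\mathcal{F}_g)\bigr)=(\lambda^{G'}_c)_g\,\alpha^{G'}_g\bigl(L_g(\mathcal{F}_g)\bigr),
\]
and this is where $L_g=(i^{alg}_*)^{-1}$, $\delta_g$ and $\gamma_g$ are the inverses of the respective pushforwards. Under the identifications with $(-)(X^g)\otimes R(G')_g$, this element is $(\lambda_c\circ\alpha\otimes 1)(L_g(\mathcal{F}_g))$. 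The bottom triangles of (\ref{diag2}) then give
\[
(Ch\otimes\chi_g)\bigl(\gamma_g([D^{sign,G}_X]_g)\bigr)=(\td_*\otimes\chi_g)\bigl(L_g(\mathcal{F}_g)\bigr).
\]
By Definitions \ref{localizedchernclass} and \ref{alglocaliationcharacclass}, this says $Ch([D^{sign,G}_X],g)=\td_*(\mathcal{F},g)$. Applying $\Psi_2$ to both sides yields $IT_{1*}(X,g)=\td_{2*}(\mathcal{F},g)=\Psi_2L^{an}_*(X,g)$.

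The main point requiring care is the compatibility of the comparison maps with localization and with the identifications $K^{G'}_0(X^g)_g\cong K_0(X^g)\otimes R(G')_g$. Two facts must hold here. First, $\alpha^{G'}$ and $\lambda^{G'}_c$ must be $R(G')$-module maps, so that they descend to the localizations. Second, for the trivial action on $X^g$ they must reduce to $\alpha\otimes 1$ and $\lambda_c\otimes 1$. I would verify both from the constructions: $\alpha^{G'}$ is built from equivariant vector-bundle complexes and Alexander duality, and $\lambda^{G'}_c=\mu^{G'}\circ(\phi^{G'})^{-1}$ is built from geometric cycles. In each case tensoring with a representation commutes with every step. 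This is precisely the content asserted in Theorem \ref{compatibilityofloclaitions}, so I would cite it and not reprove it.
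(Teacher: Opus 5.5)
Your proposal is correct and matches the paper's proof: the paper applies the restriction compatibility (\ref{diag3}) and the localization and Chern character compatibility of Theorem \ref{compatibilityofloclaitions} to the hypothesis (\ref{ghkconjeceq}), unwinds Definitions \ref{localizedchernclass} and \ref{alglocaliationcharacclass}, and then applies $\Psi_2$. The point you flag is also handled the same way: the paper likewise does not reprove that the comparison maps are $R(G')$-linear and reduce to $\alpha\otimes 1$ and $\lambda_c\otimes 1$ on $X^g$ (where $G'$ acts trivially), but takes this as part of the content of diagram (\ref{diag2}).
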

\begin{proof}
   This follows from the definitions of $\td_{2*}(-,g)$, $Ch(-,g)$  and   Theorem \ref{compatibilityofloclaitions} applied to (\ref{ghkconjeceq}).
\end{proof}
\begin{remark}
  In the case of smooth $X$, this comparison is compatible with  $T^*_{1}(X,g)=L(X,g)$ of \cite[Remark~2.3]{CMSS}.
\end{remark}
Corollary \ref{averageLclass}, Theorem \ref{identificationGKcond}, and Theorem \ref{averagetoddalgebraic} imply
\begin{corollary}
        Let $G$ be a finite group acting by algebraic automorphisms on a   pure-dimensional complex projective variety $X$. Assume the equivariant $K$-theoretical Hodge index conjecture \ref{G-K-CONJECTURE} holds for $X$. Then Conjecture \ref{BSYi} holds for the quotient space $X/G$.
\end{corollary}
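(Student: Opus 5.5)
The plan is to chain three earlier results: Theorem \ref{averagetoddalgebraic}, Theorem \ref{identificationGKcond}, and Corollary \ref{averageLclass}, after normalizing everything by $\Psi_2$.

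\textbf{Step 1: the Hodge-theoretic side.} By Theorem \ref{averagetoddalgebraic} (Corollary 5.7 of \cite{CMSS}) applied to the projective $G$-variety $X$,
\[
IT_{1*}(X/G)=\frac{1}{|G|}\sum_{g\in G}\pi^g_*\,IT_{1*}(X,g),
\]
where $IT_{1*}(X,g)=\td_{2*}(MHC^G_1(IC'^H_X),g)$.

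\textbf{Step 2: the localized classes.} By hypothesis, Conjecture \ref{G-K-CONJECTURE} holds for $X$, which is exactly (\ref{ghkconjeceq}). Theorem \ref{identificationGKcond} then gives, for every $g\in G$,
\[
IT_{1*}(X,g)=\Psi_2 L^{an}_*(X,g).
\]
This rests on the commutativity of the localization diagrams in Theorem \ref{compatibilityofloclaitions} and on restriction to $G'=\langle g\rangle$ as in (\ref{diag3}).

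\textbf{Step 3: the $L$-class side.} By the conventions in item (c) of Section \ref{convennotaion}, $X$ is a compact oriented Witt space with a $G$-equivariant triangulation on which $G$ acts simplicially, preserving orientation. Corollary \ref{averageLclass} therefore applies and gives
\[
L^{an}_*(X/G)=\frac{1}{|G|}\sum_{g\in G}\pi^g_*L^{an}_*(X,g).
\]

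\textbf{Step 4: normalizing.} Apply $\Psi_2$ to the identity in Step 3. The map $\Psi_2$ is defined degreewise, so it commutes with the pushforwards $\pi^g_*$, which preserve homological degree. Using $\Psi_2 L^{an}_*(X/G)=L_*(X/G)$ from (\ref{ch=mathcalL}), we get
\[
L_*(X/G)=\frac{1}{|G|}\sum_{g\in G}\pi^g_*\,\Psi_2L^{an}_*(X,g)=\frac{1}{|G|}\sum_{g\in G}\pi^g_*\,IT_{1*}(X,g)=IT_{1*}(X/G).
\]
The middle equality is Step 2 and the last is Step 1. This is Conjecture \ref{BSYi} for $X/G$.

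\textbf{Main obstacle.} The only delicate point is consistency of conventions. One needs that $\pi^g$ in Theorem \ref{averagetoddalgebraic} and in Corollary \ref{averageLclass} is the same composite $X^g\to X\to X/G$. One also needs that the pushforward $j_*$ in Theorem \ref{Banagl_localization_comparison} matches the pushforward implicit in Banagl's $\pi_*L_*(X,g)$. Finally, $X/G$ must be a compact pure-dimensional projective variety, so that Conjecture \ref{BSYi} makes sense for it; this holds because quotients of projective varieties by finite groups are projective. I would check these points explicitly before assembling the chain above.
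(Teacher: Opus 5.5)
Your proposal is correct and follows the paper's proof exactly: the paper obtains the corollary by chaining Theorem~\ref{averagetoddalgebraic}, Theorem~\ref{identificationGKcond}, and Corollary~\ref{averageLclass}, and your Step~4 only makes explicit the $\Psi_2$-normalization, which commutes with the degree-preserving pushforwards $\pi^g_*$ and which the paper leaves implicit. The consistency points you flag are harmless: $\pi^g=\pi\circ i$ in both averaging formulas, and $\pi_*j_*=\pi^g_*$ is exactly how Corollary~\ref{averageLclass} is derived from Theorems~\ref{banaglintro} and~\ref{Banagl_localization_comparison}.
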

If we only  assume the characteristic  Conjecture \ref{BSYi}, we need the inclusion $X^g\hookrightarrow X$ to be  \textit{$G'$-nicely} normally nonsingular, $G'=\langle g \rangle$.
\begin{theorem}\label{Identification}
  Let  $G$ be a finite group acting on  a pure-dimensional complex projective algebraic
variety $X$ by algebraic automorphisms.  Let  $g\in G, G'=\langle g \rangle$.  Assume the inclusion of fixed point set $X^g\hookrightarrow X$ is  \textit{$G'$-nicely} normally nonsingular (Definition \ref{nice}). Moreover, assume Conjecture \ref{BSYi} holds for $X^g$; $IT_{1*}(X^g)=L_*(X^g)$. Then in $H_*(X^g,\mathbb{C})$ we have
    \begin{equation}
        IT_{1*}(X,g):=\td_{2*}(MHC^G_1(IC'^H_X),g)=\Psi_2Ch(\left[D_X^{sign,G}\right],g)=:\Psi_2L^{an}_*(X,g)
    \end{equation}
\end{theorem}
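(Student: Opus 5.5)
Both sides are computed by explicit formulas available under the $G'$-nice hypothesis, and the plan is to match these formulas factor by factor on $X^g$. Throughout we restrict from $G$ to $G'=\langle g\rangle$, which changes neither side: $\td_{2*}(-,g)$ and $Ch(-,g)$ both factor through $res^G_{G'}$.

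\emph{Algebraic side.} Corollary \ref{tdgformula} applies because of conditions (1) and (3) of Definition \ref{nice}. Using the hypothesis $IT_{1*}(X^g)=L_*(X^g)=\Psi_2L^{an}_*(X^g)$, it gives
\[
IT_{1*}(X,g)=\Bigl(ch_{(2),g}(\lambda_1N^{\vee}_{X^g/X})\cup ch_{(2),g}(\lambda_{-1}N^{\vee}_{X^g/X})^{-1}\Bigr)\cap \Psi_2L^{an}_*(X^g).
\]

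\emph{Analytic side.} Condition (2) of Definition \ref{nice} puts us in the setting of Corollary \ref{chernlocal}. Condition (1) says the topological normal bundle $E_g$ is the underlying bundle of the algebraic normal bundle $N=N_{X^g/X}$, $G'$-equivariantly. So $E_g$ is a $G'$-equivariant complex bundle, and $E_g(-1)$ carries a $G'$-invariant complex structure. The eigenbundle decomposition $E_g=\bigoplus_j E_c(e^{i\theta_j})$ is then the algebraic eigen-decomposition $N=\bigoplus_\theta N_\theta$. If the complex structure Banagl--Leichtnam--Piazza use on $E_c(e^{i\theta_j})$ turns out to be the conjugate of the algebraic one (eigenvalue $e^{-i\theta}$), we absorb this by reindexing $\theta\mapsto -\theta$, checking it against the orientation conventions of Remark \ref{comatbileoreination}. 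Corollary \ref{chernlocal} then gives
\[
L^{an}_*(X,g)=(-1)^{\dim_{\mathbb{C}}N}\prod_\theta\frac{ch_g(\lambda_1N_\theta)}{ch_g(\lambda_{-1}N_\theta)}\cap L^{an}_*(X^g).
\]

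\emph{Matching the two formulas.} Apply $\Psi_2$ and use $\Psi_2(a\cap b)=\Psi^2(a)\cap\Psi_2(b)$ up to the convention for $\Psi^2$ on cohomology. The statement then reduces to the following identity of cohomology classes on $X^g$:
\[
\Psi^2\Bigl((-1)^{\dim_{\mathbb{C}} N}\prod_\theta \frac{ch_g\lambda_1 N_\theta}{ch_g\lambda_{-1}N_\theta}\Bigr)=\frac{ch_{(2),g}\lambda_1N^\vee}{ch_{(2),g}\lambda_{-1}N^\vee}.
\]
Both sides are multiplicative in $N$, so by the splitting principle it suffices to treat one line bundle with Chern root $x$ on which $g$ acts by $e^{i\theta}$.

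\begin{itemize}
\item On $N^\vee$ the Chern root is $-x$ and $g$ acts by $e^{-i\theta}$. After $\Psi^2$ the right side becomes $\dfrac{1+e^{-2x}e^{-i\theta}}{1-e^{-2x}e^{-i\theta}}$, up to how $\Psi^2$ interacts with $\chi_g$.
\item The left side is $-\dfrac{1+e^{2x}e^{i\theta}}{1-e^{2x}e^{i\theta}}$.
\item Multiplying numerator and denominator of the left side by $e^{-2x}e^{-i\theta}$ gives $-\dfrac{e^{-2x}e^{-i\theta}+1}{e^{-2x}e^{-i\theta}-1}$, which equals the right side.
\end{itemize}
This computation is exactly where the sign $(-1)^{\dim_{\mathbb{C}}E_g}$ of Remark \ref{comatbileoreination} is absorbed; Example \ref{examplesign} confirms the sign convention.

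\emph{Main obstacle.} The real difficulty is bookkeeping of conventions rather than conceptual. We must check three things:
\begin{itemize}
\item how $\Psi_2$ (the $ch_{(2)}$ normalization) acts on the twisted character $\chi_g$: it should rescale only the cohomological degree and leave the eigenvalue $e^{i\theta}$ untouched;
\item that $ch_{(2),g}$ in Corollary \ref{tdgformula} is implemented this way;
\item that the orientation of $X^g$ induced from $TX|_{X^g}=TX^g\oplus E_g$ matches the complex orientation used for $L^{an}_*(X^g)$ and $IT_{1*}(X^g)$.
\end{itemize}
I would settle these by testing the identity on the smooth case, where $L_*(X,g)=T^*_1(X,g)$ by \cite{CMSS}, and on Example \ref{examplesign}. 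Any residual mismatch would be a global sign or a conjugation $\theta\mapsto-\theta$, and the smooth case pins it down.
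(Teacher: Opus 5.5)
Your overall route is the paper's: combine Corollary \ref{tdgformula} with Corollary \ref{chernlocal} and match the normal factors line by line. Your identity $-\frac{1+e^{y}}{1-e^{y}}=\frac{1+e^{-y}}{1-e^{-y}}$ (with $y=2x+i\theta$) is correct. The gap is the analytic-side step you defer, and that step is where the paper's proof does its work.

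Corollary \ref{chernlocal} uses the complex structures on $E_c(e^{i\xi})$, $0<\xi\le\pi$, in which $g$ acts by $e^{i\xi}$. Its class $L^{an}_*(X^g_{an})$ is taken with the orientation of $X^g$ induced from $TX|_{X^g}\cong TX^g\oplus E$, where $E$ is oriented by those structures. Holomorphically, $E_c(e^{i\xi})=N_\xi\oplus\overline{N_{2\pi-\xi}}$, so every $N_\theta$ with $\pi<\theta<2\pi$ enters with its conjugate structure.

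Reindexing $\theta\mapsto-\theta$ does not turn $\overline{N_\theta}$ into $N_\theta$. By the same identity (now with $y=x+i\theta$),
$\frac{ch_g\lambda_1\overline{N_\theta}}{ch_g\lambda_{-1}\overline{N_\theta}}=(-1)^{r_\theta}\frac{ch_g\lambda_1N_\theta}{ch_g\lambda_{-1}N_\theta}$.
So the normal factor picks up $(-1)^{r_-}$, where $r_-=\sum_{\pi<\theta<2\pi}\operatorname{rk}_{\mathbb{C}}N_\theta$.

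The third check on your list also fails. Conjugating these summands changes the orientation of $E$, and hence of $X^g$, by $(-1)^{r_-}$. Therefore $L^{an}_*(X^g_{an})=(-1)^{r_-}L^{an}_*(X^g)$, where the right side uses the holomorphic orientation in which the hypothesis $IT_{1*}(X^g)=L_*(X^g)$ is stated.

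These two signs cancel. So your displayed formula for $L^{an}_*(X,g)$, read with the holomorphic orientation on $X^g$, is in fact correct, and your line-by-line matching (where $(-1)^{\dim_{\mathbb{C}}N}$ is absorbed) then finishes the proof. But the proposal does not establish that formula: the mechanism you name is wrong, and the orientation compatibility you plan to verify is false.

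Your fallback would not catch this either, because neither discrepancy is a global sign. Example \ref{examplesign} is an involution with $r_-=0$, so it cannot detect anything. Appealing to the smooth case would first require showing that the discrepancy is a universal sign depending only on the ranks $r_\theta$, and that is the computation itself.

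The missing idea is the decomposition $E_c(e^{i\xi})=N_\xi\oplus\overline{N_{2\pi-\xi}}$, together with the observation that the factor $(-1)^{r_-}$ from conjugation in the normal factor is exactly compensated by the $(-1)^{r_-}$ change of orientation of $X^g$.
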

\begin{proof}
We assume the orientation on $X$ is the orientation induced from the complex algebraic structure. Since the normal bundle  $N$ of $X^g\hookrightarrow X$ is a holomorphic bundle and $g$ acts holomorphically, 
\begin{equation}\label{holdecomN}
    N=\bigoplus_{0<\theta<2\pi} N_{\theta}
\end{equation}
where $g$ acts by $e^{i\theta}$ on $N_{\theta}$.  In the analytic language we need to pass to the real normal bundle,  its decomposition into real $g$-invariant  blocks is
\begin{equation}\label{realdecomN}
    E=N_{\mathbb{R}}=E_g(-1) \bigoplus_{0<\xi<\pi} E_c(e^{i\xi})
\end{equation}
where $g$ acts by $e^{i\xi}$ on $E_c(e^{i\xi})$. $E_c(e^{i\xi})$ (including $E_c(e^{i\pi}):=E_g(-1)$) is even-dimensional and admits a natural $G$-invariant complex structure, we call this the \textit{analytic} complex structure. For the $(-1)$-eigenspace we use the holomorphic complex structure inherited from $N_\pi$ and 
\[
E_c(e^{i\pi}):=E_c(-1)=N_\pi
\] For ${0<\xi<\pi}$, the relation between the analytic complex structure and the holomorphic complex structure is
\[E_c(e^{i\xi})=N_{\xi} \oplus \overline{N_{2\pi-\xi}}\]
Let $r_\xi:=\operatorname{rank}_{\mathbb C}N_\xi$, then
\[\frac{ch_g(\lambda_1N_\xi)}
     {ch_g(\lambda_{-1}N_\xi)}
=
(-1)^{r_\xi}
\frac{ch_g(\lambda_1N_\xi^\vee)}
     {ch_g(\lambda_{-1}N_\xi^\vee)}\]
\[\frac{ch_g(\lambda_1\overline{N_{2\pi-\xi}})}
     {ch_g(\lambda_{-1}\overline{N_{2\pi-\xi}})}
=
\frac{ch_g(\lambda_1N_{2\pi-\xi}^\vee)}
     {ch_g(\lambda_{-1}N_{2\pi-\xi}^\vee)}\]
Hence
\[\frac{ch_g(\lambda_1E_c(e^{i\xi}))}
     {ch_g(\lambda_{-1}E_c(e^{i\xi}))}
=
(-1)^{r_\xi}
\frac{ch_g(\lambda_1N_\xi^\vee)}
     {ch_g(\lambda_{-1}N_\xi^\vee)}
\frac{ch_g(\lambda_1N_{2\pi-\xi}^\vee)}
     {ch_g(\lambda_{-1}N_{2\pi-\xi}^\vee)}\]
Since we equip $E_g(-1)$ with the holomorphic complex structure of $N_\pi$,
\begin{equation}\label{minus-one-factor}
\frac{ch_g(\lambda_1E_c(e^{i\pi}))}
     {ch_g(\lambda_{-1}E_c(e^{i\pi}))}
=
(-1)^{r_\pi}
\frac{ch_g(\lambda_1N_\pi^\vee)}
     {ch_g(\lambda_{-1}N_\pi^\vee)}
\end{equation}
Set \[
r_+:=\sum_{0<\xi<\pi}\operatorname{rk}_{\mathbb C}N_\xi,
\qquad
r_-:=\sum_{\pi<\theta<2\pi}\operatorname{rk}_{\mathbb C}N_\theta
\]
Multiplying  over $0<\xi \leq \pi$, we obtain
\begin{equation}\label{total-normal-factor}
\prod_{0<\xi\leq\pi}
\frac{ch_g(\lambda_1E_c(e^{i\xi}))}
     {ch_g(\lambda_{-1}E_c(e^{i\xi}))}
=
(-1)^{r_++r_\pi}
\frac{ch_g(\lambda_1N^\vee)}
     {ch_g(\lambda_{-1}N^\vee)}
\end{equation}
The analytic complex structure agrees with the holomorphic complex structure on the summands $N_\xi$, $0<\xi\leq\pi$, whereas on each $N_\theta$ with $\pi<\theta<2\pi$ it is the conjugate complex structure. Replacing the complex structure of a complex vector bundle of rank $r$ by its conjugate changes the orientation of its underlying real bundle by $(-1)^r$. Hence the analytic orientation of $E$ differs from its holomorphic orientation by
$(-1)^{r_-}$. As mentioned in Remark \ref{comatbileoreination} the orientation on $X^g$ is induced by that of $X$ and $E$; 
\[TX|_{X^g}\simeq TX^g \oplus E\]
 The orientation of $E$ comes from the orientations of  analytic complex structure on $E_c(e^{i\xi})$.  $X^g$ with  this induced orientation  is denoted by $X_{an}^g$. Since the orientation on $X$ is fixed, it follows that the induced analytic orientation and the holomorphic orientation of $X^g$ also differ by $(-1)^{r_-}$. Consequently,
\begin{equation*}
L_*(X^g)
=
(-1)^{r_-}L_*(X_{\mathrm{an}}^g)
=
(-1)^{r_-}\Psi_2L^{an}_*(X_{\mathrm{an}}^g)
\end{equation*}
By assumption,
\[
IT_{1*}(X^g)=L_*(X^g)
\]
\begin{equation*}
\Psi_2L^{an}_*(X_{\mathrm{an}}^g)
=
(-1)^{r_-}IT_{1*}(X^g)
\end{equation*}
By Corollary \ref{tdgformula},
\begin{equation*}
IT_{1*}(X,g)
=
\frac{ch_{(2),g}(\lambda_1N^\vee)}
     {ch_{(2),g}(\lambda_{-1}N^\vee)}
\cap IT_{1*}(X^g)
\end{equation*}
By Corollary \ref{chernlocal},
\[
L^{an}_*(X,g)
=
(-1)^{\dim_{\mathbb C}N}
\prod_{0<\xi\leq\pi}
\frac{ch_g(\lambda_1E_c(e^{i\xi}))}
     {ch_g(\lambda_{-1}E_c(e^{i\xi}))}
\cap L^{an}_*(X_{\mathrm{an}}^g)
\]
Therefore by (\ref{total-normal-factor}) and $r_++r_-+r_\pi=\dim_{\mathbb C}N$ we get
\[
\begin{aligned}
\Psi_2L^{an}_*(X,g) &= (-1)^{\dim_{\mathbb C}N+r_++r_\pi} \frac{ch_{(2),g}(\lambda_1N^\vee)} {ch_{(2),g}(\lambda_{-1}N^\vee)} \cap \Psi_2L^{an}_*(X_{\mathrm{an}}^g) \\
&= (-1)^{\dim_{\mathbb C}N+r_++r_\pi+r_-} \frac{ch_{(2),g}(\lambda_1N^\vee)} {ch_{(2),g}(\lambda_{-1}N^\vee)} \cap IT_{1*}(X^g) \\
&= \frac{ch_{(2),g}(\lambda_1N^\vee)} {ch_{(2),g}(\lambda_{-1}N^\vee)} \cap IT_{1*}(X^g) \\
&= IT_{1*}(X,g)
\end{aligned}
\]
\end{proof}
Corollary \ref{averageLclass}, Theorem \ref{Identification}, and Theorem \ref{averagetoddalgebraic} imply
\begin{corollary}\label{theoremorbitspace}
 Let $G$ be a finite group acting by algebraic automorphisms on a   pure-dimensional complex projective variety $X$. For every $g\in G$ assume the following: For $G'=\langle g\rangle$, the inclusion of the fixed point set $X^g\hookrightarrow X$ is  \textit{G'-nicely} normally nonsingular (Definition \ref{nice}), and  Conjecture \ref{BSYi} holds for $X^g$. Then
    \[IT_{1*}(X/G)=L_*(X/G)\] 
\end{corollary}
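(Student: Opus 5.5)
The plan is to combine the two averaging formulas with the fixed-point identification. The statement is essentially a chaining of three results already established: Theorem \ref{averagetoddalgebraic} (the algebraic average formula of \cite{CMSS}), Corollary \ref{averageLclass} (the analytic average formula obtained from Banagl's Theorem \ref{banaglintro} together with Theorem \ref{Banagl_localization_comparison}), and Theorem \ref{Identification} (the comparison of localized classes under the $G'$-nice hypothesis). So the proof will be a short formal argument. The real content sits in those earlier results.

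First, I write the algebraic side using Theorem \ref{averagetoddalgebraic}:
\[
IT_{1*}(X/G)=\frac{1}{|G|}\sum_{g\in G}\pi^g_*\,IT_{1*}(X,g),\qquad IT_{1*}(X,g)=\td_{2*}(MHC^G_1(IC'^H_X),g)\in H_*(X^g,\mathbb{C}).
\]
Second, I fix $g\in G$ and put $G'=\langle g\rangle$. By the hypotheses, $X^g\hookrightarrow X$ is $G'$-nicely normally nonsingular and Conjecture \ref{BSYi} holds for $X^g$. Theorem \ref{Identification} then gives $IT_{1*}(X,g)=\Psi_2L^{an}_*(X,g)$ in $H_*(X^g,\mathbb{C})$. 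Since $\td_{2*}(-,g)$ and $Ch(-,g)$ depend only on $G'$ (by the remark following Definition \ref{alglocaliationcharacclass}), restricting to the cyclic subgroup costs nothing. Applying this for every $g$ and substituting, I get
\[
IT_{1*}(X/G)=\frac{1}{|G|}\sum_{g}\pi^g_*\Psi_2L^{an}_*(X,g)=\Psi_2\Bigl(\frac{1}{|G|}\sum_g\pi^g_*L^{an}_*(X,g)\Bigr),
\]
using that pushforward commutes with the homological Adams operation $\Psi_2$, since both preserve degree.

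Third, Corollary \ref{averageLclass} identifies the bracket with $L^{an}_*(X/G)$. By (\ref{ch=mathcalL}), $\Psi_2L^{an}_*(X/G)=L_*(X/G)$, which proves the statement. Corollary \ref{averageLclass} applies because, by convention \ref{varietysatisfyconditions}, $X$ carries a $G$-equivariant triangulation making it an oriented Witt pseudomanifold with a simplicial, orientation-preserving action (algebraic automorphisms preserve the complex orientation). Equivalently, one can combine Theorem \ref{banaglintro} with Theorem \ref{Banagl_localization_comparison}, which gives $\pi_*L_*(X,g)=\pi_*j_*\Psi_2L^{an}_*(X,g)=\pi^g_*\Psi_2L^{an}_*(X,g)$.

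The main obstacle is bookkeeping rather than substance. Three things need care. First, the orientation of $X^g$ used in the analytic class must agree with the holomorphic one, and this sign was already handled inside Theorem \ref{Identification}. Second, when $X^g$ is disconnected, everything must be read componentwise, following the conventions in Section \ref{convennotaion}. Third, the normalizations $\Psi_2$ versus $L^{an}_*$ must be tracked consistently between Banagl's classes and the analytic ones.
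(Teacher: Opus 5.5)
Your proposal is correct and follows the paper's proof. The paper also derives the corollary by combining Theorem \ref{averagetoddalgebraic}, Theorem \ref{Identification} (applied to each $g$ with $G'=\langle g\rangle$) and Corollary \ref{averageLclass}, and your bookkeeping of $\Psi_2$ commuting with pushforward is the only glue needed.
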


\section{Appendix: Equivariant specialization theorems}\label{appendix}
In this appendix, we make some remarks about how some well-known specialization theorems lift to the $G$-equivariant case for a finite group $G$, such as Sch\"urmann's theorem on specialization of motivic Hodge-Chern classes \cite[Thm. 1.1]{S2}, Sch\"urmann's Verdier--Riemann--Roch formula for smooth pullbacks \cite[Corollary~5.11]{S}, Verdier specialization map, and specialization of intersection Hodge module of normal cone deformation space \cite[Prop. 6.20]{B}.  

We briefly review some equivariant notions discussed in \cite[\S~6]{CMSS}. Let $A$ be a covariant pseudofunctor on the category of quasi projective varieties, taking
values in additive categories, e.g., $A(X)$ being  $D^{b}(MHM(X)), D_{coh}^{b}(X),$ or $D_{c}^{b}(X,\mathbb{Q})$. Then a $G$-equivariant object $M\in A(X)$ is given by the underlying object $M$ together with isomorphisms
$
\psi_g:M\rightarrow g_*M
$
satisfying the cocycle condition, and an equivariant morphism is a morphism that commutes with all $\psi_g$. Let $A^G(X)$ denote the category of equivariant objects and morphisms. An equivariant morphism $f:X\rightarrow Y$ induces a pushforward $f_*^G$ and an equivariant pullback $f_G^*$, the latter always exists in the case of $A(X)=D^{b}(MHM(X)),D_{c}^{b}(X,\mathbb{Q})$ and for smooth or flat morphisms in the case of $D_{coh}^{b}(X)$. Equivariant exterior products also exist for all these categories. In the case of a triangulated category, an equivariant distinguished triangle in $A^G(X)$ is given by a distinguished triangle in $A(X)$ where all the morphisms are $G$-equivariant. Then one can define the Grothendieck group $K_0(A^G(X))$ as the quotient of the free abelian group generated by objects of $A^G(X)$ by the relations given by equivariant distinguished triangles. Lastly, the content of \cite[Ex.~6.6]{CMSS} is that given an exact functor $F:A(X)\rightarrow A'(X)$ of triangulated categories commuting with the $G$-action, there is an induced $G$-equivariant functor $F^G:A^G(X)\rightarrow A'^G(X)$ with underlying functor $F=For(F^G)$ and a group homomorphism
$$
F^G:K_0(A^G(X))\rightarrow K_0(A'^G(X))
$$
In particular, this applies to $Gr_p^FDR^G$. Note that the forgetful functor
\begin{equation}\label{forgetful}
For:A^G(X)\rightarrow A(X)
\end{equation}
is conservative; if $M\xrightarrow{f}N$ is an equivariant morphism such that $For(f)$ is an isomorphism in $A(X)$, then $f$ is an isomorphism in $A^G(X)$ (the inverse becomes equivariant).

Moreover, from \cite[Lemma.~6.7]{CMSS} it follows, e.g., that
$$
K_0(D^{b,G}(MHM(X)))\cong K_0(MHM^G(X))
$$ 
$$
K_0(D_{coh}^{b,G}(X))\cong K_0(Coh^G(X))=:G^G_0(X)
$$
where the right sides are, respectively, the $K_0$ group of the abelian category of $G$-equivariant algebraic mixed Hodge modules and coherent algebraic sheaves on $X$.

 \begin{theorem} \label{equiavarintspecialzition} 
Let $i:Y\hookrightarrow X$ be a $G$-equivariant closed regular embedding of complex algebraic varieties. Let $N$ denote the algebraic normal bundle. The Gysin map $i^!:G^G_0(X)\rightarrow G^G_0(Y)$ factors as
\begin{equation}\label{diag4}
\begin{tikzcd}
G^G_0(X)  \arrow{rr}{i^!} \arrow{rd}{Sp_G} && G^G_0(Y)   \\
&G^G_0(N) \arrow{ru}{k^!} &
\end{tikzcd}
\end{equation}
where $k: Y  \hookrightarrow N $ is the inclusion of the zero section, and $Sp_G$ is the $G$-equivariant $G_0$-theoretic Verdier specialization map
\[
Sp_G: G^G_0(X)\rightarrow G^G_0(N)
\]
\end{theorem}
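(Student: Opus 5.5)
We construct $Sp_G$ by deformation to the normal cone, exactly as in the non-equivariant theory of Verdier/Fulton, and check that every ingredient is $G$-equivariant. Let $\mathcal{Z}^\circ=\operatorname{Bl}_{Y\times 0}(X\times\mathbb{C})\setminus \operatorname{Bl}_{Y\times 0}(X\times 0)$ with its flat map $p:\mathcal{Z}^\circ\to\mathbb{C}$. Here $p^{-1}(0)=N$ and $p^{-1}(\mathbb{C}^*)\cong X\times\mathbb{C}^*$. Since $i$ is $G$-equivariant and $G$ acts trivially on $\mathbb{C}$, the blow-up, the open subset and $p$ all inherit $G$-actions. The maps $\iota:N\hookrightarrow\mathcal{Z}^\circ$, $j:X\times\mathbb{C}^*\hookrightarrow\mathcal{Z}^\circ$ and $q:X\times\mathbb{C}^*\to X$ are then equivariant.

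The equivariant localization sequence
\[
G^G_0(N)\xrightarrow{\iota_*}G^G_0(\mathcal{Z}^\circ)\xrightarrow{j^*}G^G_0(X\times\mathbb{C}^*)\to 0
\]
is exact. This holds because every $G$-equivariant coherent sheaf on an invariant open set extends equivariantly: extend it non-equivariantly, then sum the $g$-translates and take the image, which is the usual argument in $\operatorname{Coh}^G$.

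Since $N=p^{-1}(0)$ is an invariant principal Cartier divisor with trivial action on the equation $t$, the refined Gysin map $\iota^!:G^G_0(\mathcal{Z}^\circ)\to G^G_0(N)$ is defined by $[\mathcal{F}]\mapsto[\mathcal{F}\otimes^L\mathcal{O}_N]$, i.e. $[\mathcal{F}/t\mathcal{F}]-[\ker t]$. It vanishes on $\iota_*G^G_0(N)$ because $\iota^!\iota_*$ is multiplication by the class $1-[\mathcal{O}_N(-N)]$, and $\mathcal{O}_{\mathcal{Z}^\circ}(-N)\cong\mathcal{O}$ equivariantly since $t$ is invariant. So $\iota^!$ descends to $G^G_0(X\times\mathbb{C}^*)$. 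We then set
\[
Sp_G:=\overline{\iota^!}\circ q^*,
\]
which is the description (\ref{sp_g_description}) used in the proof of Theorem \ref{pullbackofmhc1}: $Sp_G(a)=\iota^!(b)$ for any $b$ with $j^*b=q^*a$.

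To prove $i^!=k^!\circ Sp_G$, we follow Fulton's argument (or Verdier's) equivariantly. Let $\pi_Y:Y\times\mathbb{C}\to Y$. The closed embedding $Y\times\mathbb{C}\hookrightarrow\mathcal{Z}^\circ$ is regular and equivariant, and its normal bundle restricts to $N$ over $0$ and to $N\times\mathbb{C}^*$ elsewhere. Take $\beta=\overline{q^*a}$. Apply the Gysin map of $Y\times\mathbb{C}\hookrightarrow\mathcal{Z}^\circ$, which is defined via Koszul resolutions of an invariant regular embedding and hence lies in $G^G_0$. Then restrict to $t=0$ and to $t=1$. The $t=0$ fibre gives $k^!(\iota^!\beta)$ by commutativity of Gysin maps for Tor-independent squares, since $k$ and $\iota$ fit into a transverse square with $Y\times 0$. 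The $t=1$ fibre gives $i^!a$. Both restrictions, to $t=0$ and to $t=1$, are Gysin maps for divisors pulled back from $\mathbb{C}$ along the trivial action. They agree on $G^G_0(Y\times\mathbb{C})\cong G^G_0(Y)$ by equivariant homotopy invariance, using $\pi_Y^*$ and the fact that $[\mathcal{O}_0]=[\mathcal{O}_1]$ in $G_0(\mathbb{C})$.

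The main obstacle is the compatibility of Gysin maps ($\iota^!$ versus the $t=1$ restriction, and the Tor-independence commutation) in the equivariant setting. We handle it by working in $K_0(D^{b,G}_{coh})\cong G^G_0$ via \cite[Lemma~6.7]{CMSS}. All the functors involved are derived tensor products with $G$-equivariant complexes, namely Koszul complexes of invariant sections. The natural isomorphisms from the non-equivariant proofs (associativity of $\otimes^L$, base change) are canonical and therefore commute with the $\psi_g$. The forgetful functor (\ref{forgetful}) is conservative, so these isomorphisms are isomorphisms in $A^G$ and pass to the Grothendieck groups.
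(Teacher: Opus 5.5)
Your argument is correct. It shares the paper's definition of $Sp_G$ and its well-definedness check: lift $q^*a$ through the localization sequence, apply $\iota^!$, and use $\iota^!\iota_*=0$ because the central fibre has trivial normal bundle. Where you differ is the factorization $i^!=k^!\circ Sp_G$ itself.

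The paper does not prove the factorization directly. It cites Qu's non-equivariant result for algebraic stacks and transports it through $G_0([X/G])\cong G^G_0(X)$. Equivariance then comes for free, but the paper leaves implicit two things: that Qu's hypotheses cover $[X/G]$, and that this identification intertwines the stacky Gysin and specialization maps with the equivariant ones.

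You instead run the Fulton--Verdier argument inside equivariant $G_0$. With $\sigma:Y\times\mathbb{C}\hookrightarrow\mathcal{Z}^\circ$, the identity $\sigma\circ\iota_0=\iota\circ k$ gives $\iota_0^!\sigma^!\beta=k^!Sp_G(a)$. The identity $\sigma\circ\iota_1=j_1\circ i$, together with $Lj_1^*\beta=a$ (since $j^*\beta=q^*a$ and $q$ restricted to $X\times 1$ is the identity), gives $\iota_1^!\sigma^!\beta=i^!a$. Both follow from functoriality of derived pullback for finite-Tor-dimension maps. This route is self-contained and isolates exactly the equivariant inputs needed:
\begin{enumerate}
\item Quillen localization for $\operatorname{Coh}^G$;
\item regularity of $\sigma$;
\item Thomason's equivariant homotopy invariance $\pi_Y^*:G^G_0(Y)\xrightarrow{\cong}G^G_0(Y\times\mathbb{C})$.
\end{enumerate}

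Two small remarks. First, homotopy invariance is the operative input in your last step. Concluding $\iota_0^!=\iota_1^!$ needs surjectivity of $\pi_Y^*$. The observation $[\mathcal{O}_0]=[\mathcal{O}_1]$ in $G_0(\mathbb{C})$ does not suffice on its own: both classes are in fact $0$ there, and that remark yields equality only after a proper pushforward. No such pushforward is available for $\pi_Y$ unless you work over $\mathbb{P}^1$.

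Second, the local equations $f_i/t$ of $Y\times\mathbb{C}$ are not individually $G$-invariant. So ``Koszul complexes of invariant sections'' is literally true only for $t$ and $t-1$. The map $\sigma^!$ is nonetheless equivariant, because the sheaves $\operatorname{Tor}_q^{\mathcal{O}_{\mathcal{Z}^\circ}}(\mathcal{O}_{Y\times\mathbb{C}},-)$ carry canonical $G$-structures.
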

\begin{proof}
    This is given non-equivariantly on the level of $G_0$ for algebraic stacks in \cite[\S~2.2]{Q}.  See \cite[Lemma~6.29]{B} for these maps on the level of $H_*$. This can be lifted to the equivariant case  for algebraic varieties, e.g., by considering the quotient stacks $[X/G]$ and $[Y/G]$ and using the natural identification
$$
G_0([X/G])\cong G_0^G(X)
$$
A $G$-equivariant embedding $i:Y\hookrightarrow X$ induces a morphism
$
i:[Y/G]\hookrightarrow [X/G]
$. Let us recall how $Sp_G$ is defined via   deformation  to the normal cone \cite[\S~2.1]{Q}. Let
\[
p:\mathcal{Z}^{\circ}\longrightarrow \mathbb{C}
\]
be the (equivariant) flat deformation to the normal bundle (with trivial action on the parameter space $\mathbb{C}$) associated to the embedding
$i:Y\hookrightarrow X$. Thus,
\[
p^{-1}(0)=N,
\qquad
p^{-1}(\mathbb{C}^{*})\cong X\times\mathbb{C}^{*}
\]
Denote the corresponding inclusions and projection by
\[
\iota:N\hookrightarrow\mathcal{Z}^{\circ},
\qquad
j:X\times\mathbb{C}^{*}\hookrightarrow\mathcal{Z}^{\circ},
\qquad
q:X\times\mathbb{C}^{*}\longrightarrow X
\]
These maps are equivariant. If $\alpha\in G^G_0(X)$  then
\begin{equation}\label{sp_g_description}
    Sp_G(\alpha)=\iota^!\circ (j^*)^{-1}\circ q^*(\alpha)
\end{equation}
where $(j^*)^{-1}$ denotes choosing a lift of a class from $G^G_0(X\times\mathbb{C^*})$ to $G^G_0(\mathcal{Z}^{\circ})$, the lift is independent of choices because of the following exact sequence
\[G_0^G(N)\xrightarrow{\iota_*} G_0^G(\mathcal{Z}^{\circ})\xrightarrow{j^*}G_0^G(X\times\mathbb{C^*})\rightarrow 0\]
if $\beta_1,\beta_2$ are two such lifts, then $\beta_1-\beta_2=\iota_*(\alpha)$ and then
$\iota^!(\beta_1-\beta_2)=\iota^!\iota_*(\alpha)=\lambda_{-1}(N^*_{\iota})\alpha$.
The normal bundle $N_{\iota}$ of central fiber is the trivial line bundle, and hence  $\lambda_{-1}(N^*_{\iota})=0$.
\end{proof}
Recall
\[
 MHC^G_y: K_0(\MHM^G(X)) \to  G^G_0(X)[y,y^{-1}]
\]
is the $G$-equivariant motivic Chern class transformation, defined in \cite{CMSS}, from the Grothendieck group of $G$-equivariant algebraic mixed Hodge modules to the Grothendieck group of $G$-equivariant algebraic coherent sheaves on $X$. As mentioned in the introduction,  the construction of $MHC^G_y$ in \cite{CMSS} relies on the notion of \textit{weak equivariant derived categories} \cite{CMSS}  to adapt Saito’s
functors $\gr^F_{-p}\DR$ to the equivariant context.
 \begin{theorem}(Equivariant VRR for smooth pullbacks)\label{equiv_vrr_shurmann}
    Let $G$ be a finite group acting by algebraic automorphisms on quasi-projective complex algebraic varieties $Z,Z'$. Let  $q:Z'\rightarrow Z$ be a  $G$-equivariant smooth morphism. Then for $M\in \mhm^G(Z)$
\[
MHC^G_y(q^*[M])
=
\lambda_y(T_q^*)\otimes q^*MHC^G_y([M])
\]
where  $T^*_{q}$ is the equivariant relative cotangent bundle of $q$. $\otimes$ denotes the $K^0$-module structure of $G_0$.
\end{theorem}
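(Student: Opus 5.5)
The plan is to reduce the equivariant statement to Schürmann's non-equivariant Verdier--Riemann--Roch for smooth pullbacks \cite[Corollary~5.11]{S}. The key point is that every object entering the proof has a natural $G$-equivariant structure, and the relevant comparison isomorphisms are natural, hence automatically $G$-equivariant.

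First I recall the construction of $MHC^G_y$ from \cite{CMSS}. For $M\in\mhm^G(Z)$ with underlying filtered $\sD$-module $(\cM,F)$, locally after a $G$-equivariant embedding into a smooth $G$-variety, one has
\[MHC^G_y[M]=\sum_p\bigl[\gr^F_{-p}\DR^G(\cM)\bigr](-y)^p .\]
Here $\gr^F_{-p}\DR^G$ is the equivariant lift (via \cite[Ex.~6.6]{CMSS} and the weak equivariant derived categories) of Saito's functor. Thus the equivariant class is computed by the same complexes as the non-equivariant one, now carrying the structure isomorphisms $\psi_g$.

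Second, I go through Schürmann's proof for a smooth morphism $q:Z'\to Z$ of relative dimension $d$. He produces, functorially in $M$, a filtered quasi-isomorphism
\[\gr^F_{-p}\DR(q^*\cM)\;\simeq\;\bigoplus_{k}\Omega^k_{q}\otimes q^*\gr^F_{-p+k}\DR(\cM)\]
in $D^b_{coh}(Z')$, up to the shift conventions built into $q^*$ on $\mhm$. This comes from the relative de Rham complex and the local product structure $Z'\simeq Z\times\mathbb{C}^d$, with the Hodge filtration on $q^*\cM$ given by $q^*F$. Passing to $K$-theory, one gets a filtration of the complex whose graded pieces are $\Omega^k_q\otimes q^*\gr^F\DR(\cM)$; only this graded statement is needed, since $K_0$ is additive in exact triangles. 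I will check that each ingredient is canonical: the relative de Rham complex, the isomorphism $\Omega^1_q\cong T^*_q$, and the pullback of the Hodge filtration. Being canonical, each is compatible with the isomorphisms $g:Z'\to Z'$ and $g:Z\to Z$, and so commutes with the $\psi_g$. The filtration and the resulting exact triangles therefore lie in $D^{b,G}_{coh}(Z')$, and $T^*_q$ carries its natural $G$-linearization. Summing with $(-y)^p$ gives
\[\sum_k[\Omega^k_q]y^k=\lambda_y(T^*_q)\in K^0_G(Z'),\]
which is exactly the claimed formula in $G^G_0(Z')[y^{\pm1}]$. Here I use that $q^*$ on $G^G_0$ is defined because $q$ is flat, and that $K_0(D^{b,G}_{coh})\cong G^G_0$ by \cite[Lemma~6.7]{CMSS}.

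The main obstacle is the second step. Schürmann's identification is usually stated only locally, through a factorization of $q$ as a smooth embedding followed by a projection, or through local charts, and such local choices are not $G$-invariant. My first approach is to avoid any choices by working with the filtered de Rham complex on $Z'$ directly: use Saito's formula $F_pq^*\cM=q^*F_p\cM$ for smooth pullbacks, then apply the canonical Koszul-type filtration of $\DR_{Z'}(q^*\cM)$ by the relative forms. That filtration is defined globally and intrinsically, hence is $G$-stable. The remaining work is to check that its graded pieces are $\Omega^k_q\otimes q^*\gr^F\DR(\cM)$ with the correct $F$-shift.

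If global embeddings into smooth $G$-varieties are needed to define $\DR$ for singular $Z$, I will choose a $G$-equivariant embedding $Z\hookrightarrow W$ and base-change $q$ to a $G$-equivariant smooth morphism $W'\to W$ locally. Independence of choices then follows from the non-equivariant result together with the conservativity of $For$ in (\ref{forgetful}).
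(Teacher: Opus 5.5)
\textbf{Smooth case.} For smooth $Z$ and $Z'$ your argument is correct, and it takes a different route from the paper. The paper follows Sch\"urmann's proof of \cite[Corollary~5.11]{S}: using proper pushforward, smooth base change and the projection formula, it reduces to classes $j_*$ of complements of normal crossing divisors in smooth varieties. There the equivariant logarithmic de Rham description \cite[Eq.~(62)]{CMSS} makes the formula explicit. (Sch\"urmann argues this way too; he does not produce the quasi-isomorphism you attribute to him.)

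Your intrinsic filtration works, but only on the right complex. The subcomplexes $W_b$ of forms of vertical degree $\le b$ are preserved by the differential of $\gr^F_{-p}\DR_{Z'}(q^*\cM)$. That complex is the Koszul complex of $q^*\gr^F\cM$, and its symbol lies in $q^*\Omega^1_Z$ because vertical vector fields preserve $F_\bullet q^*\cM=q^*F_\bullet\cM$. They are not preserved by the differential of $\DR_{Z'}(q^*\cM)$ itself, where the connection differentiates the $\cO_{Z'}$-coefficients in vertical directions. On $\gr^F$ the graded pieces are $\Omega^b_q\otimes q^*\gr^F_{-p+b}\DR_Z(\cM)[-b]$. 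Everything is canonical and $G$-stable, and the sign $(-1)^b(-y)^b=y^b$ produces $\lambda_y(T^*_q)$. The $F$-shift built into $q^*$ can be fixed by testing on $\mathbb{Q}^H_Z$.

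\textbf{The gap: singular $Z$.} The statement includes this case, and the paper actually uses it, for $\pi:N\to Y$ and $q:X\times\mathbb{C}^*\to X$ with $X,Y$ singular.
\begin{enumerate}
\item Your extensions of $q$ to smooth morphisms $W'\to W$ exist only locally on $Z'$. They are non-canonical and not $G$-stable.
\item The vertical-degree filtration depends on the chosen extension, so the local filtrations do not glue to a $G$-equivariant filtration on $Z'$. Nor can an identity in $G^G_0(Z')$ be assembled from local identities.
\item Conservativity of $For$ does not help. It only promotes an already equivariant morphism that is a non-equivariant isomorphism to an equivariant isomorphism. The non-equivariant identity in $G_0(Z')$ says nothing about the equivariant one, since $For:G^G_0(Z')\to G_0(Z')$ is far from injective (already $R(G)\to\mathbb{Z}$ for $Z'=pt$).
\end{enumerate}

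\textbf{The missing step: a d\'evissage to the smooth case.} By $G$-equivariant resolution of singularities and induction on the dimension of supports, $K_0(\MHM^G(Z))$ is generated by classes $f_*[M']$, where $f:\tilde S\to Z$ is a $G$-equivariant proper morphism from a smooth $G$-variety. Set $\tilde S':=\tilde S\times_Z Z'$, with projections $f':\tilde S'\to Z'$ and $q':\tilde S'\to\tilde S$; then $q'$ is a smooth morphism between smooth varieties. One then uses:
\begin{enumerate}
\item $q^*f_*=f'_*q'^*$, both on $\MHM^G$ and (since $q$ is flat) on $G^G_0$;
\item $MHC^G_y\circ f'_*=f'_*\circ MHC^G_y$;
\item $T^*_{q'}=f'^*T^*_q$;
\item the projection formula $f'_*\bigl(f'^*\lambda_y(T^*_q)\otimes -\bigr)=\lambda_y(T^*_q)\otimes f'_*(-)$.
\end{enumerate}
Together these reduce the claim to your smooth argument applied to $q'$. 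This is exactly the skeleton of the paper's (Sch\"urmann's) proof, with each step lifted equivariantly via \cite[\S~6]{CMSS}.
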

\begin{proof}
The non-equivariant proof given in \cite[Corollary 5.11]{S} uses smooth base change, logarithmic de Rham description, and the projection formula. By \cite[\S~6]{CMSS}, proper pushforward, exterior products, smooth pullback, and projection formula lift to the equivariant context. The equivariant version of logarithmic de Rham description \cite[Ex.~5.8]{S} is already proved in \cite[Eq.~(62)]{CMSS}.
\end{proof}

Now  let $G$ be a finite group acting by algebraic automorphisms on a quasi-projective complex algebraic variety $X$. Let $f:X\rightarrow \mathbb{C}$ be an equivariant algebraic function with the trivial action on $\mathbb{C}$. Let $X_0:=\{f=0\}$ be a hypersurface of codimension one. Since $f$ is $G$-equivariant, the nearby cycle functor $\Psi^H_f$ on  algebraic mixed Hodge modules, which is an exact functor, commutes with the push-forward $g_*$ induced by each $g \in G$, and hence by \cite[Ex.~6.6]{CMSS} this induces a $G$-equivariant functor on the weakly derived category
\[\Psi^{H,G}_f:D^{b,G}\mhm(X)\rightarrow D^{b,G}\mhm(X_0)\]
and a group homomorphism 
\[\Psi^{H,G}_f:K_0(D^{b,G}\mhm(X))\rightarrow K_0(D^{b,G}\mhm(X_0))\]
then by the isomorphism   $K_0(D^{b,G}\mhm(X))\cong K_0(\mhm^G(X))$ \cite[Lemma~6.7]{CMSS} this induces a group homomorphism
\[\Psi^{H,G}_f:K_0(\mhm^G(X))\rightarrow K_0(\mhm^G(X_0))\]
Let $i:X_0\rightarrow X$ be the inclusion and
\[i^!:G^G_0(X)\rightarrow G^G_0(X_0)\]
is the Gysin map mentioned in Theorem \ref{equiavarintspecialzition}. Let
\[\Psi'^{H,G}_f:=\Psi^{H,G}_f[1]\]
 \begin{theorem}(Equivariant specialization of motivic Hodge-Chern classes)\label{equiv_shurman_specialziation}
We have the equality
\[(1+y)MHC^G_y(\Psi'^{H,G}_f(-))=i^!(MHC^G_y(-))\]
 as transformations $K_0(\mhm^G(X)) \rightarrow G_0^G(X_0)[y,y^{-1}]$.
 \end{theorem}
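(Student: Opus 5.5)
The plan is to lift Schürmann's non-equivariant specialization theorem \cite[Thm. 1.1]{S2} to the $G$-equivariant setting, in the same way Theorem \ref{equiv_vrr_shurmann} was lifted. Both sides are additive transformations $K_0(\mhm^G(X))\to G_0^G(X_0)[y,y^{-1}]$, so it suffices to check the equality on classes $[M]$ with $M\in\mhm^G(X)$. The first step is to reduce to a local, smooth-ambient statement. Choose a $G$-equivariant closed embedding $X\hookrightarrow W$ into a smooth quasi-projective $G$-variety, extending $f$ to a $G$-invariant function $F$ on a $G$-invariant open neighborhood. This is possible by averaging over $G$ and taking products of projective spaces with the permutation action, as in Section \ref{algebraiclocalizationembeding}. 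We then replace $X$ by its graph embedding $X\hookrightarrow W\times\mathbb{C}$, $x\mapsto (x,f(x))$, with $G$ acting trivially on $\mathbb{C}$. With this replacement, $f$ becomes the restriction of the smooth projection $t$, and $X_0=X\cap\{t=0\}$. Since $MHC^G_y$ and the Gysin map $i^!$ are compatible with closed equivariant pushforward, and $\Psi_f^{H,G}$ commutes with closed pushforward, it is enough to prove the formula for $t$ on the smooth ambient space $W\times\mathbb{C}$ and the smooth divisor $\{t=0\}$.

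The second step is to run Schürmann's argument in this setting with all data made equivariant. In his proof the key input is Saito's description of $\Psi_t$ through the $V$-filtration along $t=0$. Concretely, $\gr^F\DR(\psi_t\cM)$ is computed from $\gr_V^\alpha$ for $0<\alpha\le 1$ (or $-1\le\alpha<0$, depending on conventions). This is compared with the derived restriction $Li^*\gr^F\DR(\cM)$, which by definition represents $i^!$ on $G_0$. The comparison is a filtered quasi-isomorphism of complexes of $\cO$-modules, built from multiplication by $t$ and $\partial_t$ on $V^\bullet\cM$. The factor $(1+y)$ arises from the Koszul complex of the trivial conormal line of $\{t=0\}$, i.e.\ $\lambda_y$ of a trivial line. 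The point to check is that every construction involved is canonical and therefore commutes with the isomorphisms $\psi_g:M\to g_*M$. These constructions are the $V$-filtration, which is unique, the Hodge filtration, Saito's functor $\gr^F_{-p}\DR$, and the Koszul resolution of $\cO_{\{t=0\}}$. Since $t$ is $G$-invariant, each $g$ preserves $V^\bullet$, so all these constructions are $G$-equivariant. By \cite[Ex.~6.6]{CMSS}, the resulting exact triangles and filtered quasi-isomorphisms lift to the weak equivariant derived categories $D^{b,G}_{coh}$. Hence they give identities in $G_0^G$ via \cite[Lemma~6.7]{CMSS}. The triviality of the $G$-action on the conormal line of $\{t=0\}$ ensures that the factor is exactly $1+y$ with trivial character.

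The third step is to use conservativity of the forgetful functor (\ref{forgetful}). This shows that the non-equivariant isomorphisms in Schürmann's proof, once they are equivariant morphisms, are isomorphisms in $A^G$. Combining these gives $(1+y)MHC^G_y(\Psi'^{H,G}_f[M])=i^!MHC^G_y([M])$, where the shift $[1]$ accounts for the sign.

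I expect the main obstacle to be the first, reduction step. One must ensure that Saito's graph-embedding construction of $\Psi_f$ for singular $X$ and the definition of $MHC^G_y$ via local embeddings in \cite{CMSS} are compatible equivariantly. I would try first to use a global equivariant embedding as above, which avoids gluing local embeddings. The remaining work is then to check independence of the embedding, by the same argument as in \cite{CMSS} for $MHC^G_y$ itself.
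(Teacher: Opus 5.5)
Your Steps 2 and 3 are essentially the paper's argument. Uniqueness of the $V$-filtration along the $G$-invariant coordinate $t$ makes it $G$-stable, so the filtrations and exact sequences of strict specializability are equivariant. The trivial action on $dt$ produces the factor $1+y$, and conservativity of $For$ upgrades the isomorphisms.

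\textbf{The gap is in Step 1.} The paper reduces to the smooth case by equivariant resolution of singularities. For proper $\pi:\tilde X\to X$, the formula for a class $\beta$ upstairs gives it for $\pi_*\beta$, and such classes generate $K_0(\mhm^G(X))$ by induction on dimension. So there the deduction runs in the right direction.

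Your closed embedding $k:X\hookrightarrow W\times\mathbb{C}$ runs the other way. Knowing the formula for $t$ on $W\times\mathbb{C}$, together with the (correct) compatibilities of $MHC^G_y$, $\Psi^{\prime H,G}$ and $i^!$ with $k_*$, only yields
\[
k_{0*}\bigl((1+y)MHC^G_y(\Psi^{\prime H,G}_f[M])-i^!MHC^G_y([M])\bigr)=0\quad\text{in } G^G_0(W\times 0)[y,y^{-1}].
\]
Here $k_{0*}:G^G_0(X_0)\to G^G_0(W\times 0)$ is not injective in general. For instance, two distinct points of $\mathbb{P}^1$ have the same class in $G_0(\mathbb{P}^1)$. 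On an affine space every class supported on a proper closed subset vanishes, so the ambient identity can carry no information at all. Hence the claim that ``it is enough to prove the formula for $t$ on $W\times\mathbb{C}$'' is false as stated.

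\textbf{The repair.} It keeps your route: do not quote an ambient theorem, but run Sch\"urmann's argument for the filtered $\sD$-module of $k_*M$ on $W\times\mathbb{C}$ while tracking supports.
\begin{enumerate}
\item Every coherent sheaf occurring in the exact sequences of \cite[Lemmas 2.1, 2.2]{S2} is set-theoretically supported on $k(X)$, resp.\ $k(X_0)$.
\item For Hodge modules supported on $Z$, Saito's $\mathcal{I}_Z F_p\subset F_{p-1}$ makes the $\gr^F$-pieces $\mathcal{O}_Z$-modules, compatibly with how $MHC^G_y$ is defined on singular spaces.
\item Equivariant d\'evissage gives $K_0(\operatorname{Coh}^G_{X_0}(W\times 0))\cong G^G_0(X_0)$.
\end{enumerate}
The additivity identities then hold directly in $G^G_0(X_0)[y,y^{-1}]$.

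The paper relies on the same bookkeeping implicitly. After resolving, it still graph-embeds $M\hookrightarrow M\times\mathbb{C}$ and must read identities on $M\times 0$ as identities on $f^{-1}(0)$. With this bookkeeping your embedding route is valid and even makes resolution unnecessary, since strict specializability along $t$ does not need smooth support.

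A minor point: the graph map $X\hookrightarrow W\times\mathbb{C}$ is already a closed embedding. So you need no extension $F$ of $f$ to a neighborhood, and such an extension need not exist globally.
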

\begin{proof} We follow \cite{S2,MS}. \(MHC_y^G\), equivariant nearby cycles, and the equivariant
Gysin morphism commute with proper \(G\)-pushforward, and equivariant
resolution of singularities reduces the general case to the following case of algebraic manifold $M$. The proof uses only the underlying filtered $\sD$-module $(\mathcal M,F)$. Let the graph embedding be
$
i_f:M\hookrightarrow M':=M\times\mathbb C,
x\mapsto(x,f(x)),
$
\(G\) acts trivially on the second factor and $i_f$ is \(G\)-equivariant, put
$
(\widetilde{\mathcal M},F):=(i_f)_+(\mathcal M,F).
$
Then $(\widetilde{\mathcal M},F)$ is a $G$-equivariant $\sD_{M'}$-module; there are isomorphisms $\phi_g:g^*(\widetilde{\mathcal M},F) \xrightarrow{\cong} (\widetilde{\mathcal M},F)$. \(t=\operatorname{pr}_2\) is \(G\)-equivariant, and every
\(g\in G\) preserves \(t\), \(\partial_t\). The canonical
Kashiwara--Malgrange \(V\)-filtration along $t$ is then \(G\)-equivariant \cite[Lemma 3.2]{LY}, i.e., $V_\alpha\widetilde{\mathcal M}$ is a $G$-equivariant $\mathcal{O}_{M'}$-submodule of $\widetilde{\mathcal M}$. This follows from the uniqueness of \(V\)-filtration, if
\(\{V_\alpha\widetilde{\mathcal M}\}\) is the canonical \(V\)-filtration,
then, for every \(g\),
$
{
g(V_\alpha\widetilde{\mathcal M}\bigr)
}
$
again satisfies all defining properties of the canonical \(V\)-filtration,
since \(g\) preserves \(t\) and \(\partial_t\). The Hodge filtration $F_{p}\widetilde{\mathcal M}$ is equivariant, therefore
the induced filtrations
\(F_pV_{\alpha}\widetilde{\mathcal M}=F_p \widetilde{\mathcal M} \cap V_{\alpha}\widetilde{\mathcal M}, \: F_pGr^V_\alpha\widetilde{{\mathcal M}}=F_pV_{\alpha}\widetilde{\mathcal M}/F_pV_{<\alpha}\widetilde{\mathcal M}\)
are also equivariant. All conditions of strict specializability \cite[Def. 2.4]{S2}
become equivariant, so all exact sequences and isomorphisms coming from
strict specializability
\cite[Lemmas.~2.1, 2.2]{S2} are exact sequences and isomorphisms in the
\(G\)-equivariant category. Since $G$ acts trivially on the second factor of $M'$, the splitting $\Omega^k_{M'}\cong \Omega^k_{M'/\mathbb{C}}\oplus dt \wedge \Omega^{k-1}_{M'/\mathbb{C}}$ is \(G\)-equivariant as well and all bifiltered double complexes are equivariant. Hence the equalities obtained there by
additivity hold in \(G_0^G(X_0)\).

\end{proof}

We mention the equivariant version of a result of M. Banagl on the specialization of the intersection Hodge module of the deformation to the normal cone \cite[Prop.~6.20]{B}.
\begin{theorem}\label{banagl_ic_nomralcone_z} Let $i:Y\hookrightarrow X$ be a $G$-equivariant  upwardly normally nonsingular embedding of  pure-dimensional compact complex algebraic varieties $X,Y$ (Definition \ref{upward}) with the  deformation  to the normal cone 
$
p:\mathcal{Z}^{\circ}\longrightarrow \mathbb{C}$ and the algebraic normal bundle $p^{-1}(0)=N$. Then in $D^{b,G}(\MHM(N))$
\[
\Psi_p^{H,G}(IC_{\mathcal{Z}^\circ}^H)\cong IC_N^H
\]
\end{theorem}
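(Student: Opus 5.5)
The plan is to reduce the statement to Banagl's non-equivariant result \cite[Prop.~6.20]{B}, using the conservativity of the forgetful functor $For$ in (\ref{forgetful}). First we must check that both sides are $G$-equivariant objects of $D^{b,G}(\MHM(N))$. Since $i$ is $G$-equivariant, $G$ acts on the deformation space $\mathcal{Z}^\circ$, acting trivially on the parameter $\mathbb{C}$. The action is by algebraic automorphisms commuting with $p$, and it restricts to the induced linear action on $N=p^{-1}(0)$. The intersection Hodge modules $IC^H_{\mathcal{Z}^\circ}$ and $IC^H_N$ carry canonical equivariant structures $\psi_g$: by uniqueness of intermediate extensions, $g_*IC^H\cong IC^H$ canonically, and the cocycle condition follows from this canonicity. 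The functor $\Psi^{H,G}_p$ is defined as in the discussion before Theorem \ref{equiv_shurman_specialziation}, via \cite[Ex.~6.6]{CMSS}, because $p$ is $G$-invariant and $\Psi^H_p$ commutes with each $g_*$. Hence $\Psi^{H,G}_p(IC^H_{\mathcal{Z}^\circ})$ is an object of $D^{b,G}(\MHM(N))$ whose underlying object is $\Psi^H_p(IC^H_{\mathcal{Z}^\circ})$.

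Next we produce an equivariant morphism. By \cite[Prop.~6.20]{B} there is an isomorphism $\Psi^H_p(IC^H_{\mathcal{Z}^\circ})\cong IC^H_N$ in $D^b(\MHM(N))$, under the upwardly normally nonsingular hypothesis. The key point is to make this isomorphism canonical, so that it automatically commutes with the $\psi_g$. The route I would try is as follows. Restricted to the open dense part $N^\circ$, the generic stratum, $\Psi^H_p(IC^H_{\mathcal{Z}^\circ})$ is the constant/variation-type Hodge module $\mathbb{Q}^H[\dim]$. Banagl's argument shows that $\Psi^H_p(IC^H_{\mathcal{Z}^\circ})$ is a single Hodge module which is the intermediate extension of its restriction to the regular locus. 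He obtains this from the topological normal nonsingularity of $E\subset \operatorname{Bl}$, which makes the nearby cycle complex satisfy the support and cosupport conditions. The isomorphism is then the unique one extending the canonical identification on the regular part, which is a local system of rank one with trivial monodromy. Because $\operatorname{Hom}(IC^H_N,IC^H_N)$ is determined by the restriction to the regular part, any morphism that is equivariant there is equivariant globally.

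On the regular part, equivariance is clear. There the identification $\Psi_p\mathbb{Q}_{\mathcal{Z}^\circ}\cong\mathbb{Q}_N$ (suitably shifted) is the canonical specialization map, and it is natural with respect to automorphisms preserving $p$.

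Finally, since the resulting morphism $\Psi^{H,G}_p(IC^H_{\mathcal{Z}^\circ})\to IC^H_N$ is equivariant and $For$ of it is an isomorphism by \cite[Prop.~6.20]{B}, conservativity of $For$ gives the claim in $D^{b,G}(\MHM(N))$.

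The main obstacle is the middle step: verifying that Banagl's isomorphism is canonical, or at least $G$-compatible, rather than merely abstract. If it is only abstract, I would fall back on the argument above. Both objects are simple along the generic stratum, with $\operatorname{End}$ of rank one after checking that $N$ is irreducible componentwise. One then averages or adjusts the isomorphism on each $G$-orbit of components of $N$ by comparing the $\psi_g$ at a generic point, where both structures are the trivial structure on the constant sheaf.
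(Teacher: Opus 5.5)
Your proof is correct, and it takes a genuinely different route from the paper.

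The paper opens up Banagl's proof of \cite[Prop.~6.20]{B} and makes its inputs equivariant one at a time:
\begin{enumerate}
\item It rigidifies the isomorphism $i^*IC^H_X[-c]\cong IC^H_Y$ of \cite[Lemma~6.19]{B}. It normalizes this isomorphism to be the identity on the $G$-invariant dense open $V_G=\bigcap_{h\in G}h.V$, and uses that $IC^H_Y$ has no nonzero subobject supported on $Y\setminus V_G$.
\item It handles \cite[Lemma~6.13]{B} the same way.
\item It notes that Saito's specialization identity \cite[Eq.~(12)]{B} comes from a natural transformation between $G$-equivariant functors.
\item It concludes by conservativity of $For$.
\end{enumerate}

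You instead use \cite[Prop.~6.20]{B} only as a source of some abstract isomorphism $\phi:\Psi^H_p(IC^H_{\mathcal{Z}^\circ})\cong IC^H_N$, and apply the same rigidification trick once, to $\phi$ itself. You rescale $\phi$ by $j_{!*}$ of a locally constant scalar so that it becomes the identity on a dense open of $N$, under canonical identifications of both sides with $\mathbb{Q}^H[\dim N]$. Full faithfulness of $j_{!*}$ then forces $g\cdot\phi=\phi$.

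Your version is shorter and does not depend on how Banagl's argument is organized, in particular on his possibly non-invariant stratifications. The paper's version, in exchange, also yields equivariance of the intermediate isomorphisms, e.g.\ of $i^*IC^H_X[-c]\cong IC^H_Y$.

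Two points to tighten.

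First, the open set on which you normalize must be $G$-invariant. It must also lie in the locus near which $\mathcal{Z}^\circ$ is smooth and $p$ is a smooth morphism. Only there does the canonical comparison morphism from the shifted restriction $\iota^*$ to $\Psi^H_p$ give a $G$-compatible identification with the constant Hodge module. ``The generic stratum'' of a non-invariant stratification does not qualify. $N|_{Y^{\mathrm{reg}}}$ does: for a regular embedding, $Y^{\mathrm{reg}}\subset X^{\mathrm{reg}}$, since $Y$ is locally cut out by a regular sequence, and the deformation to the normal bundle of a smooth pair is smooth over $\mathbb{C}$. The paper's $N|_{V_G}$ also works.

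Second, drop the ``averaging'' fallback. The defect $(g\cdot\phi)\circ\phi^{-1}$ is a cocycle with values in locally constant $\mathbb{Q}^\times$-valued functions. Averaging the translates $g\cdot\phi$ can give zero, e.g.\ when this cocycle is a sign character. Normalization is what works, and it already takes care of components of $N$ permuted by $G$.
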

\begin{proof}
  We make some remarks on how some of the inputs in the proof of \cite[Prop.~6.20]{B} can be seen equivariantly. \cite[Lemma~6.19]{B} implies that there is an isomorphism $\alpha:i^*IC_X^H[-c]\xrightarrow{\cong}IC_Y^H$ in $D^b(\mhm(Y))$, where $c$ is the complex codimension of $Y/X$. Since we do not assume that the algebraic stratifications appearing in the upwardly normally nonsingular condition are $G$-invariant, the proof given in loc. cit. does not extend equivariantly in an immediate way. Instead, we construct an isomorphism in $D^{b,G}(\mhm(Y))$ as follows. Let $U\xrightarrow{j}X$ and $V=U\cap Y\xrightarrow{j_V}Y$ be the Zariski-open  dense smooth algebraic subvarieties of $X$ and $Y$ appearing in loc. cit. Set $U_G:=\bigcap_{h\in G}h.U$ and $V_G:=\bigcap_{h\in G}h.V$. Since $G$ is finite, $U_G$ and $V_G$ are again smooth dense Zariski-open subsets, and they are $G$-invariant. Since $i$ is $G$-equivariant, $V_G=U_G\cap Y$. Let $j_{V_G}:V_G\hookrightarrow Y$ denote the $G$-equivariant inclusion.  Since $IC_Y^H$ and $i^*IC_X^H[-c]$ have $G$-equivariant structure, there are canonical $G$-equivariant identifications $IC_Y^H|_{V_G}\cong \mathbb{Q}_{V_G}^H[d_Y]$ and $\left(i^*IC_X^H[-c]\right)|_{V_G}\cong\mathbb{Q}_{V_G}^H[d_Y]$. Under these identifications, the restriction of $\alpha$ to $V_G$ determines an automorphism $a:\mathbb{Q}_{V_G}^H[d_Y]\xrightarrow{\cong}\mathbb{Q}_{V_G}^H[d_Y]$. By functoriality of the intermediate extension, $a^{-1}$ induces an automorphism $j_{V_G!*}(a^{-1}):IC_Y^H\xrightarrow{\cong}IC_Y^H$. Replacing $\alpha$ by $\widetilde{\alpha}:=j_{V_G!*}(a^{-1})\circ\alpha$, we may therefore assume that, under the canonical identifications above, $\widetilde{\alpha}|_{V_G}=\operatorname{id}_{\mathbb{Q}_{V_G}^H[d_Y]}$. Then $\widetilde{\alpha}$ is $G$-equivariant. Indeed, for every $g\in G$, the $g$-translate $g\cdot\widetilde{\alpha}:=\psi^{-1}_g\circ g_*(\widetilde{\alpha})\circ \psi_g$ is another isomorphism $i^*IC_X^H[-c]\xrightarrow{\cong}IC_Y^H$ whose restriction to $V_G$ is the identity. Hence $(g\cdot\widetilde{\alpha})\widetilde{\alpha}^{-1}$ is an automorphism of $IC_Y^H$ which restricts to the identity on $\mathbb{Q}_{V_G}^H[d_Y]$. Since $IC_Y^H$ has no nonzero subobject supported on $Y\setminus V_G$, such an automorphism must be the identity. Thus $g\cdot\widetilde{\alpha}=\widetilde{\alpha}$ for every $g\in G$, and therefore $\widetilde{\alpha}$ is $G$-equivariant. Another input is \cite[Lemma~6.13]{B}, which says that the smooth pullback of (shifted) $IC^H$ is $IC^H$. The proof is similar to that of \cite[Lemma~6.19]{B}, we may replace $U,V$ by their invariant subsets $U_G,V_G$, and all the functors are now equivariant, which makes the isomorphisms hold in the equivariant category. Another input is a result of Saito that the specialization functor from mixed Hodge modules on $X$ to those on $N$ induces the identity on $Y$, \cite[Eq.~(12)]{B}. This is induced by a natural morphism of functors that are all $G$-equivariant, since all the maps in the deformation to the normal cone are \(G\)-equivariant. Since the forgetful functor is conservative, this implies that the equivariant map is also an isomorphism. 
\end{proof}


\begin{thebibliography}{99}

\bibitem[ABP1]{ABP1}
P.~Albin, M.~Banagl, and P.~Piazza,
\emph{Smooth atlas stratified spaces, $K$-homology orientations, and Gysin maps},
preprint, 2025, arXiv:2505.14952.

\bibitem[ABP2]{ABP2}
P.~Albin, M.~Banagl, and P.~Piazza,
\emph{Smooth atlas stratified spaces, $K$-homology orientations, and Gysin maps. Part 2},
preprint, 2026, arXiv:2605.28620.

\bibitem[ALMP]{ALMP}
P.~Albin, E.~Leichtnam, R.~Mazzeo, and P.~Piazza,
\emph{The signature package on Witt spaces},
Ann. Sci. \'Ec. Norm. Sup\'er. (4) \textbf{45} (2012), no.~2, 241--310.

\bibitem[AMS]{AMS}
M.~Aligholi, L.~Maxim, and J.~Sch\"urmann,
\emph{Around a class version of the Hodge index theorem for singular varieties},
preprint, arXiv:2511.08780 [math.AG].

\bibitem[AS2]{AS2}
M.~F.~Atiyah and G.~B.~Segal,
\emph{The index of elliptic operators: II},
Ann. of Math. (2) \textbf{87} (1968), no.~3, 531--545.

\bibitem[AS3]{AS3}
M.~F.~Atiyah and I.~M.~Singer,
\emph{The index of elliptic operators: III},
Ann. of Math. (2) \textbf{87} (1968), no.~3, 546--604.

\bibitem[B]{B}
M.~Banagl,
\emph{Gysin restriction of topological and Hodge-theoretic characteristic classes for singular spaces},
New York J. Math. \textbf{26} (2020), 1273--1337.

\bibitem[B26]{B26}
M.~Banagl,
\emph{Equivariant $L$-classes of Atiyah--Singer--Zagier type for singular spaces},
Bull. Sci. Math. \textbf{213} (2026), Paper No.~103885.

\bibitem[BLP]{BLP}
M.~Banagl, E.~Leichtnam, and P.~Piazza,
\emph{The $G$-signature theorem on Witt spaces},
preprint, 2025, arXiv:2511.21586 [math.DG].

\bibitem[BD]{BD}
P.~Baum and R.~G.~Douglas,
\emph{$K$-homology and index theory},
in \emph{Operator Algebras and Applications, Part I},
Proc. Sympos. Pure Math., vol.~38,
Amer. Math. Soc., Providence, RI, 1982, pp.~117--173.

\bibitem[BE]{BE}
P.~Baum and E.~van Erp,
\emph{$K$-homology and index theory on contact manifolds},
Acta Math. \textbf{213} (2014), no.~1, 1--48.

\bibitem[BFM]{BFM}
P.~Baum, W.~Fulton, and R.~MacPherson,
\emph{Riemann--Roch for singular varieties},
Publ. Math. Inst. Hautes \'Etudes Sci. \textbf{45} (1975), 101--145.

\bibitem[BFM2]{BFM2}
P.~Baum, W.~Fulton, and R.~MacPherson,
\emph{Riemann--Roch and topological $K$-theory for singular varieties},
Acta Math. \textbf{143} (1979), no.~3--4, 155--192.

\bibitem[BFQ]{BFQ}
P.~Baum, W.~Fulton, and G.~Quart,
\emph{Lefschetz--Riemann--Roch for singular varieties},
Acta Math. \textbf{143} (1979), 193--211.

\bibitem[BHS]{BHS}
P.~Baum, N.~Higson, and T.~Schick,
\emph{On the equivalence of geometric and analytic $K$-homology},
Pure Appl. Math. Q. \textbf{3} (2007), no.~1, 1--24.

\bibitem[BOSW]{BOSW}
P.~Baum, H.~Oyono-Oyono, T.~Schick, and M.~Walter,
\emph{Equivariant geometric $K$-homology for compact Lie group actions},
Abh. Math. Semin. Univ. Hambg. \textbf{80} (2010), 149--173.

\bibitem[BSY]{BSY}
J.-P.~Brasselet, J.~Sch\"urmann, and S.~Yokura,
\emph{Hirzebruch classes and motivic Chern classes for singular spaces},
J. Topol. Anal. \textbf{2} (2010), no.~1, 1--55.

\bibitem[CMSS]{CMSS}
S.~E.~Cappell, L.~Maxim, J.~Sch\"urmann, and J.~L.~Shaneson,
\emph{Equivariant characteristic classes of singular complex algebraic varieties},
Comm. Pure Appl. Math. \textbf{65} (2012), no.~12, 1722--1769.

\bibitem[CS]{CS}
S.~E.~Cappell and J.~L.~Shaneson,
\emph{Stratifiable maps and topological invariants},
J. Amer. Math. Soc. \textbf{4} (1991), no.~3, 521--551.

\bibitem[Ch]{Ch}
J.~Cheeger,
\emph{Spectral geometry of singular Riemannian spaces},
J. Differential Geom. \textbf{18} (1983), no.~4, 575--657.

\bibitem[FP]{FP}
J.~Fern\'andez de Bobadilla and I.~Pallar\'es,
\emph{The Brasselet--Sch\"urmann--Yokura conjecture on $L$-classes of projective rational homology manifolds},
Int. Math. Res. Not. IMRN \textbf{2024} (2024), no.~19, 13085--13105.

\bibitem[FPS]{FPS}
J.~Fern\'andez de Bobadilla, I.~Pallar\'es, and M.~Saito,
\emph{Hodge modules and cobordism classes},
J. Eur. Math. Soc. \textbf{27} (2025), no.~2, 773--800.

\bibitem[GM]{GM}
M.~Goresky and R.~MacPherson,
\emph{Intersection homology theory},
Topology \textbf{19} (1980), 135--162.

\bibitem[HZ]{HZ}
F.~Hirzebruch and D.~Zagier,
\emph{The Atiyah--Singer Theorem and Elementary Number Theory},
Mathematics Lecture Series, vol.~3,
Publish or Perish, Boston, 1974.

\bibitem[J]{J}
M.~Jakob,
\emph{A bordism-type description of homology},
Manuscripta Math. \textbf{96} (1998), no.~1, 67--80.

\bibitem[K]{K}
G.~G.~Kasparov,
\emph{The operator $K$-functor and extensions of $C^*$-algebras},
Math. USSR-Izv. \textbf{16} (1981), no.~3, 513--572.

\bibitem[LY]{LY}
A.~C.~L{\H{o}}rincz and R.~Yang,
\emph{Filtrations of $\mathscr D$-modules along semi-invariant functions},
Adv. Math. \textbf{499} (2026), 111037.

\bibitem[MS]{MS}
L.~Maxim and J.~Sch\"urmann,
\emph{Characteristic classes of mixed Hodge modules and applications},
in \emph{Schubert Varieties, Equivariant Cohomology and Characteristic Classes},
EMS Ser. Congr. Rep.,
European Mathematical Society, Z\"urich, 2017, pp.~159--202.



\bibitem[M]{M}
B.~Moonen,
\emph{Das Lefschetz--Riemann--Roch Theorem f\"ur singul\"are Variet\"aten},
Bonner Mathematische Schriften, vol.~106,
Bonn, 1978.





\bibitem[MW]{MW}
H.~Moscovici and F.~Wu,
\emph{Straight Chern character for Witt spaces},
in \emph{Cyclic Cohomology and Noncommutative Geometry
(Waterloo, ON, 1995)},
Fields Inst. Commun., vol.~17,
Amer. Math. Soc., Providence, RI, 1997, pp.~103--113.

\bibitem[P]{P}
M.~Puschnigg,
\emph{Diffeotopy functors of ind-algebras and local cyclic cohomology},
Doc. Math. \textbf{8} (2003), 143--245.

\bibitem[Q]{Q}
F.~Qu,
\emph{Virtual pullbacks in $K$-theory},
Ann. Inst. Fourier (Grenoble) \textbf{68} (2018), no.~4, 1609--1641.

\bibitem[Q79]{Q79}
G.~Quart,
\emph{Localization theorem in $K$-theory for singular varieties},
Acta Math. \textbf{143} (1979), 213--217.

\bibitem[R]{R}
J.~Rosenberg,
\emph{The $G$-signature theorem revisited},
in \emph{Tel Aviv Topology Conference: Rothenberg Festschrift},
Contemp. Math., vol.~231,
Amer. Math. Soc., Providence, RI, 1999, pp.~251--264.

\bibitem[RW]{RW}
J.~Rosenberg and S.~Weinberger,
\emph{Higher $G$-signatures for Lipschitz manifolds},
K-Theory \textbf{7} (1993), no.~2, 101--132.

\bibitem[RW2]{RW2}
J.~Rosenberg and S.~Weinberger,
\emph{The signature operator at 2},
Topology \textbf{45} (2006), no.~1, 47--63.

\bibitem[Sa]{Sa}
M.~Saito,
\emph{Mixed Hodge modules},
Publ. Res. Inst. Math. Sci. \textbf{26} (1990), 221--333.

\bibitem[Sch]{Sch}
C.~Schochet,
\emph{On equivariant Kasparov theory and Spanier--Whitehead duality},
K-Theory \textbf{6} (1992), no.~4, 363--385.

\bibitem[S]{S}
J.~Sch\"urmann,
\emph{Characteristic classes of mixed Hodge modules},
in \emph{Topology of Stratified Spaces},
Math. Sci. Res. Inst. Publ., vol.~58,
Cambridge University Press, Cambridge, 2011, pp.~419--470.

\bibitem[S2]{S2}
J.~Sch\"urmann,
\emph{Specialization of motivic Hodge--Chern classes},
preprint, 2009, arXiv:0909.3478.

\bibitem[Ta]{Ta}
Y.~Takeda,
\emph{Localization theorem in equivariant algebraic $K$-theory},
J. Pure Appl. Algebra \textbf{96} (1994), no.~1, 73--80.

\bibitem[T2]{T2}
R.~W.~Thomason,
\emph{Equivariant algebraic vs.\ topological $K$-homology Atiyah--Segal-style},
Duke Math. J. \textbf{56} (1988), no.~3, 589--636.

\bibitem[T1]{T1}
R.~W.~Thomason,
\emph{Une formule de Lefschetz en $K$-th\'eorie \'equivariante alg\'ebrique},
Duke Math. J. \textbf{68} (1992), no.~3, 447--462.

\bibitem[Z]{Z}
D.~Zagier,
\emph{The Pontryagin class of an orbit space},
Topology \textbf{11} (1972), no.~3, 253--264.

\end{thebibliography}
\end{document}